\newtheorem{thm}{Theorem}[section]
\newtheorem{lem}{Lemma}[section]
\newtheorem{prop}[thm]{Proposition}
\newtheorem{ass}{Assumption}[section]
\theoremstyle{definition}
\newtheorem{defn}{Definition}[section]
\theoremstyle{Condition}
\theoremstyle{remark}
\numberwithin{equation}{section}
\theoremstyle{example}
\numberwithin{equation}{section}
\begin{document}

\bigskip
\bigskip

\bigskip

\begin{center}

\textbf{\large A stochastic coordinate descent primal-dual algorithm
with dynamic stepsize for large-scale composite optimization }

\end{center}

\begin{center}
Meng Wen $^{1,2}$, Shigang Yue$^{4}$, Yuchao Tang$^{3}$, Jigen Peng$^{1,2}$
\end{center}

\begin{center}
1. School of Mathematics and Statistics, Xi'an Jiaotong University,
Xi'an 710049, P.R. China \\
2. Beijing Center for Mathematics and Information Interdisciplinary
Sciences, Beijing, P.R. China

 3. Department of Mathematics, NanChang University, Nanchang
330031, P.R. China\\
4. School of Computer Science, University of Lincoln, LN6 7TS, UK
\end{center}

\footnotetext{\hspace{-6mm}$^*$ Corresponding author.\\
E-mail address: wen5495688@163.com}

\bigskip

\noindent  \textbf{Abstract} In this paper we consider the problem
of finding the minimizations of the sum  of two convex functions and
the composition of another convex function with a continuous linear
operator. With the idea of coordinate descent, we design a
stochastic coordinate descent primal-dual splitting algorithm
 with dynamic stepsize. Based on randomized Modified Krasnosel'skii-Mann iterations and the firmly
nonexpansive properties of the proximity operator, we achieve the
convergence of the proposed  algorithms. Moreover, we give two
applications of our method. (1) In the case of stochastic minibatch
optimization, the algorithm can be applicated to split a composite
objective function into blocks, each of these blocks being processed
sequentially by the computer. (2) In the case of distributed
optimization, we consider a set of $N$ networked agents endowed with
private cost functions and seeking to find a consensus on the
minimizer of the aggregate cost. In that case, we obtain a
distributed iterative algorithm where isolated components of the
network are activated in an uncoordinated fashion and passing in an
asynchronous manner.  Finally, we illustrate the efficiency of the
method in the framework of large scale machine learning
applications. Generally speaking, our method  is comparable with
other state-of-the-art methods in numerical performance, while it
has some advantages on parameter selection in real applications.

\bigskip
\noindent \textbf{Keywords:} distributed optimization; large-scale
learning; proximity operator; dynamic stepsize

\noindent \textbf{MR(2000) Subject Classification} 47H09, 90C25,

\section{Introduction}

The purpose of this paper is to designing and discussing an
efficient algorithmic framework with dynamic stepsize for minimizing
the following problem
$$\min_{x\in\mathcal{X}} f(x)+g(x)+ (h\circ D)(x),\eqno{(1.1)}$$
where $\mathcal{X}$ and $\mathcal{Y}$ are two Euclidean spaces,
$f,g\in\Gamma_{0}(\mathcal{X})$, $h\in\Gamma_{0}(\mathcal{Y}),$ and
$f$ is differentiable on $\mathcal{Y}$ with a $\beta$-Lipschitz
continuous gradient for some $\beta\in(0,+\infty)$ and
$D:\mathcal{X}\rightarrow\mathcal{Y}$ a linear transform. This
parameter $\beta$ is related to the convergence conditions of
algorithms presented in the following section. Here and in what
follows, for a real Hilbert space $\mathcal{H}$,
$\Gamma_{0}(\mathcal{H})$ denotes the collection of all proper lower
semi-continuous convex functions from $\mathcal{H}$ to
$(-\infty,+\infty]$. Despite its simplicity, when $g=0$ many
problems in image processing can be formulated in the form of (1.1).

\par
In this paper, the contributions of us are the following aspects:
\par
(I)we  provide a more general iteration in which the coefficient
$\tau$, $\sigma$ is made iteration-dependent to solve the general
Problem (1.1), errors are allowed in the evaluation of the operators
$prox_{\sigma h^{\ast}}$and $prox_{\tau g}$. The errors allow for
some tolerance in the numerical implementation of the algorithm,
while the flexibility introduced by the iteration-dependent
parameters $\tau_{k}$ and  $\sigma_{k}$  can be used to improve its
convergence pattern. We refer to our algorithm as ADMMDS$^{+}$, and
when $\tau_{k}\equiv\tau$, $\sigma_{k}\equiv\sigma$, the ADMM$^{+}$
algorithm introduced by Bianchi [2] is a special case of our
algorithm.

\par
(II) Based on the results of Bianchi [2], we introduce the idea of
stochastic coordinate descent on  modified krasnosel¡¯skii mann
iterations. The form of Modified Krasnosel'skii-Mann iterations can
be translated into fixed point iterations of a given operator having
a contraction-like property. Interestingly, ADMMDS$^{+}$ is a
special instances of Modified Krasnosel'skii-Mann iterations. By the
view of stochastic coordinate descent, we know that at each
iteration, the algorithm is only to update  a random subset of
coordinates. Although this leads to a perturbed version of the
initial Modified Krasnosel'skii-Mann iterations, but it can be
proved to preserve the convergence properties of the initial
unperturbed version. Moreover, stochastic coordinate descent has
been used in the literature [18-20] for proximal gradient
algorithms. We believe that its application to  the broader class of
Modified Krasnosel'skii-Mann algorithms can potentially lead to
various algorithms well suited to large-scale optimization problems.

\par
(III) We use our views to large-scale optimization problems which
arises in signal processing and machine learning contexts. We prove
that the general idea of stochastic coordinate descent gives a
unified framework allowing to derive stochastic algorithms with
dynamic stepsize of different kinds. Furthermore, we give two
application examples. Firstly, we propose a new stochastic
approximation algorithm with dynamic stepsize by applying stochastic
coordinate descent on the top of  ADMMDS$^{+}$. The algorithm is
called as stochastic minibatch  primal-dual splitting algorithm with
dynamic stepsize (SMPDSDS). Secondly, we introduce a random
asynchronous distributed optimization methods with dynamic stepsize
that we call as distributed asynchronous primal-dual splitting
algorithm with dynamic stepsize (DAPDSDS). The algorithm can be used
to efficiently solve an optimization problem over a network of
communicating agents.  The algorithms are asynchronous in the sense
that some components of the network are allowed to wake up at random
and perform local updates, while the rest of the network stands
still. No coordinator or global clock is needed. The frequency of
activation of the various network components is likely to vary.

The rest of this paper is organized as follows. In the next section,
 we introduce some notations used throughout in the paper. In section 3, we devote
to introduce  PDSDS and ADMMDS$^{+}$ algorithm,  and the relation
between them, we also show how the ADMMDS$^{+}$ includes ADMM$^{+}$
and the Forward-Backward algorithm as  special cases. In section 4,
we provide our main result on the convergence of Modified
Krasnosel'skii-Mann algorithms with randomized coordinate descent.
 In section 5, we propose a stochastic approximation
algorithm from the ADMMDS$^{+}$.  In section 6, we addresse the
problem of asynchronous distributed optimization. In the final
section, we show the numerical performance and efficiency of propose
algorithm through some examples in the context of large-scale
$l_{1}$-regularized logistic regression.

\section{Preliminaries }
Throughout the paper, we  denote by $\langle \cdot, \cdot\rangle$
the inner product on $\mathcal{X}$ and by $\|\cdot\|$ the norm on
$\mathcal{X}$.

\begin{ass}
The infimum of Problem (1.1) is attained. Moreover, the following
qualification condition holds
$$0\in ri(dom\, h-D\, dom\, g).$$
\end{ass}
The dual problem corresponding to the primal Problem (1.1) is
written
$$\min_{y\in\mathcal{Y}} (f+g)^{\ast}(-D^{\ast}y)+ h^{\ast}(y),$$
where $a^{\ast}$ denotes the Legendre-Fenchel transform of a
function $a$ and where $D^{\ast}$ is the adjoint of $D$. With the
Assumption 2.1, the classical Fenchel-Rockafellar duality theory
[3], [10] shows that
$$\min_{x\in\mathcal{X}} f(x)+g(x)+ (h\circ D)(x)-\min_{y\in\mathcal{Y}} (f+g)^{\ast}(-D^{\ast}y)+ h^{\ast}(y).\eqno{(2.1)}$$

\begin{defn}
 Let $f$ be  a real-valued convex function on
$\mathcal{X}$, the operator prox$_{f}$ is defined by
\begin{align*}
prox_{f}&:\mathcal{X}\rightarrow\mathcal{X}\\
& x\mapsto \arg \min_{y\in
\mathcal{X}}f(y)+\frac{1}{2}\|x-y\|_{2}^{2},
\end{align*}
called the proximity operator of $f$.

\end{defn}

\begin{defn}
Let $A$ be a closed convex set of $\mathcal{X}$. Then the indicator
function of $A$ is defined as
$$
\iota_{A}(x) = \left\{
\begin{array}{l}
0,\,\,\, \,\,if x\in A,\\
\infty,\,\,\, otherwise .
\end{array}
\right.
$$
\end{defn}
It can easy see the proximity operator of the indicator function in
a closed convex subset $A$ can be reduced a projection operator onto
this closed convex set $A$. That is,
$$prox_{\iota_{A}}=proj_{A}$$

where proj is the projection operator of $A$.

\begin{defn}
(Nonexpansive operators and firmly nonexpansive operators [3]). Let
$\mathcal{H}$ be a Euclidean space (we refer to [3] for an extension
to Hilbert spaces). An operator $T : \mathcal{H} \rightarrow
{\mathcal{H}}$ is nonexpansive if and only if it satisfies
$$\|Tx-Ty\|_{2}\leq\|x-y\|_{2}\,\,\, for\,\,all\,\,\, (x,y)\in \mathcal{H}^{2}.$$
$T$ is firmly nonexpansive if and only if it satisfies one of the
following equivalent conditions:
\par
(i)$\|Tx-Ty\|_{2}^{2}\leq\langle Tx-Ty,x-y\rangle$\,\,\,
for\,\,all\,\,\, $(x,y)\in \mathcal{H}^{2}$;
\par
(ii)$\|Tx-Ty\|_{2}^{2}=\|x-y\|_{2}^{2}-\|(I-T)x-(I-T)y\|_{2}^{2}$\,\,\,
for\,\,all\,\,\, $(x,y)\in \mathcal{H}^{2}$.
\par
It is easy to show from the above definitions that a firmly
nonexpansive operator $T$ is nonexpansive.
\end{defn}
\begin{defn}
 A mapping $T : \mathcal{H} \rightarrow \mathcal{H}$ is said to be an averaged mapping, iff it can
be written as the average of the identity $I$ and a nonexpansive
mapping; that is,
$$T = (1-\alpha)I +\alpha S,\eqno{(2.2)}$$
where $\alpha$ is a number in ]0, 1[ and $S : \mathcal{H}
\rightarrow \mathcal{H}$ is nonexpansive. More precisely, when (2.2)
or the following inequality (2.3) holds, we say that $T$ is
$\alpha$-averaged.
$$\|Tx-Ty\|^{2}\leq \|x-y\|^{2}-\frac{(1-\alpha)}{\alpha}\|(I -T)x-(I -T)y\|^{2},\forall x,y\in\mathcal{H}.\eqno{(2.3)}$$

\end{defn}
A 1-averaged operator is said non-expansive. A $\frac{1}{ 2}$
-averaged operator is said firmly non-expansive.

 We refer the
readers to [3] for more details. Let $M : \mathcal{H} \rightarrow
\mathcal{H}$ be a set-valued operator. We denote by $ran(M) := \{v
\in\mathcal{H} : \exists u \in\mathcal{H}, v \in Mu\}$ the range of
$M$, by $ gra(M) := {(u, v) \in \mathcal{H}^{2} : v \in Mu}$ its
graph, and by $M ^{-1}$ its inverse; that is, the set-valued
operator with graph ${(v, u) \in \mathcal{H}^{2} : v \in Mu}$. We
define $ zer(M) := {u \in \mathcal{H} : 0\in Mu}$. $M$ is said to be
monotone iff $\forall(u, u' ) \in \mathcal{H}^{2},\forall(v, v' )
\in Mu\times Mu'$, $\langle u-u' , v-v' \rangle\geq 0$ and maximally
monotone iff there exists no monotone operator $M'$ such that $
gra(M) \subset gra(M) \neq gra(M)$.

The resolvent $(I + M)^{-1}$ of a maximally monotone operator $M :
\mathcal{H} \rightarrow \mathcal{H}$ is defined and single-valued on
$\mathcal{H}$ and firmly nonexpansive. The subdifferential $\partial
J$ of $J\in \in\Gamma_{0}(\mathcal{H})$ is maximally monotone and
$(I +\partial J)^{-1} = prox_{J}$ .

\begin{lem}
(Krasnosel'skii-Mann iterations [3]) Assume that $T :
\mathcal{H}\rightarrow \mathcal{H}$ is $\frac{1}{\delta}$-averaged
and that the set $Fix(T)$ of fixed points of $T$ is non-empty.
Consider a sequence $(\rho_{k})_{k\in\mathbb{N}}$ such that $0\leq
\rho_{k} \leq \delta$ and $\sum_{k} \rho_{k}(\delta -\rho_{k}) =
\infty$. For any $x^{0} \in \mathcal{H}$, the sequence
$(x^{k})_{k\in\mathbb{N}}$ recursively defined on $\mathcal{H}$ by $
x^{k+1} = x^{k} + \rho_{k}(Tx^{k}- x^{k})$ converges to some point
in $Fix(T)$.
\end{lem}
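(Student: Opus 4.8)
The plan is to reduce the relaxed iteration driven by the averaged operator $T$ to a classical Krasnosel'skii--Mann iteration for the underlying nonexpansive operator, and then run a Fej\'er-monotonicity argument. Since $T$ is $\frac{1}{\delta}$-averaged, its averaging parameter $\alpha=1/\delta$ lies in $]0,1]$, so $\delta\ge 1$ and we may write $T=(1-\tfrac1\delta)I+\tfrac1\delta S$ with $S:=I+\delta(T-I)$ nonexpansive and $\mathrm{Fix}(S)=\mathrm{Fix}(T)$. Introducing $\lambda_k:=\rho_k/\delta\in[0,1]$, the recursion becomes $x^{k+1}=x^k+\lambda_k(Sx^k-x^k)$, and the stepsize hypothesis transfers as
$$\sum_k \lambda_k(1-\lambda_k)=\frac{1}{\delta^2}\sum_k \rho_k(\delta-\rho_k)=\infty.$$

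First I would establish Fej\'er monotonicity with respect to $\mathrm{Fix}(T)$. Fixing $z\in\mathrm{Fix}(T)=\mathrm{Fix}(S)$ and expanding $\|x^{k+1}-z\|^2$ via the identity $\|(1-\lambda)a+\lambda b\|^2=(1-\lambda)\|a\|^2+\lambda\|b\|^2-\lambda(1-\lambda)\|a-b\|^2$ with $a=x^k-z$, $b=Sx^k-z$, and using $\|Sx^k-z\|\le\|x^k-z\|$, I get
$$\|x^{k+1}-z\|^2\le\|x^k-z\|^2-\lambda_k(1-\lambda_k)\|Sx^k-x^k\|^2.$$
Hence $(\|x^k-z\|)_k$ is non-increasing (thus convergent), $(x^k)_k$ is bounded, and telescoping gives $\sum_k\lambda_k(1-\lambda_k)\|Sx^k-x^k\|^2\le\|x^0-z\|^2<\infty$.

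Next I would prove asymptotic regularity, i.e.\ $\|Sx^k-x^k\|\to 0$. The key point is that $(\|Sx^k-x^k\|)_k$ is itself non-increasing: from $Sx^k-x^{k+1}=(1-\lambda_k)(Sx^k-x^k)$, the triangle inequality, nonexpansiveness of $S$, and $\|x^{k+1}-x^k\|=\lambda_k\|Sx^k-x^k\|$,
\begin{align*}
\|Sx^{k+1}-x^{k+1}\|&\le\|Sx^{k+1}-Sx^k\|+(1-\lambda_k)\|Sx^k-x^k\|\\
&\le\|x^{k+1}-x^k\|+(1-\lambda_k)\|Sx^k-x^k\|=\|Sx^k-x^k\|.
\end{align*}
So $\|Sx^k-x^k\|\to\ell$ for some $\ell\ge 0$; combined with $\sum_k\lambda_k(1-\lambda_k)=\infty$ and the summability from the Fej\'er estimate, $\ell=0$. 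Finally, since $(x^k)_k$ is bounded in the Euclidean space $\mathcal{H}$ it admits a convergent subsequence $x^{k_j}\to x^\star$; continuity of $S$ together with $\|Sx^k-x^k\|\to 0$ forces $Sx^\star=x^\star$, i.e.\ $x^\star\in\mathrm{Fix}(T)$. Applying the Fej\'er estimate with $z=x^\star$ shows $(\|x^k-x^\star\|)_k$ converges, and since it has $0$ as a subsequential limit it converges to $0$, i.e.\ $x^k\to x^\star$.

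I expect the main obstacle to be the asymptotic-regularity step: the Fej\'er inequality alone only yields $\liminf_k\|Sx^k-x^k\|=0$, and upgrading this to a genuine limit relies on the monotonicity of $k\mapsto\|Sx^k-x^k\|$, which is the one nonobvious computation. (In an infinite-dimensional Hilbert space the last step would additionally require the demiclosedness of $I-S$ at $0$ in place of plain continuity; over a Euclidean space, continuity and compactness of closed balls suffice, as the statement assumes.)
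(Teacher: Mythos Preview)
Your proof is correct and follows the standard Krasnosel'skii--Mann argument: reduction to the nonexpansive operator $S=I+\delta(T-I)$, Fej\'er monotonicity, the monotonicity of $k\mapsto\|Sx^k-x^k\|$ to upgrade $\liminf$ to a true limit, and identification of the limit via a convergent subsequence. The paper itself does not prove this lemma; it is quoted from Bauschke--Combettes~[3] as a known preliminary, so there is no ``paper's proof'' to compare against. Your argument is exactly the textbook one that underlies the cited reference, and your remark about the Euclidean setting (compactness replacing demiclosedness) is apt given Definition~2.3's restriction to Euclidean spaces.
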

\begin{lem}
(Baillon-Haddad Theorem [3, Corollary 18.16]). Let $J : \mathcal {H}
\rightarrow \mathcal {R}$ be convex, differentiable on $\mathcal
{H}$ and such that $\pi\nabla J$ is nonexpansive, for some $\pi\in
]0,+\infty[$. Then $\nabla J$ is $\pi$-cocoercive; that
is,$\pi\nabla J$ is firmly nonexpansive.
\end{lem}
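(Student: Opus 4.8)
The plan is to first normalize $\pi$ away. Replacing $J$ by $\pi J$ leaves the hypothesis intact, since $\nabla(\pi J)=\pi\nabla J$ is still nonexpansive, and turns the desired conclusion into the statement that $\nabla(\pi J)$ is firmly nonexpansive; undoing the scaling then gives both $\pi$-cocoercivity of $\nabla J$ and firm nonexpansiveness of $\pi\nabla J$. Hence it suffices to prove: if $J$ is convex and differentiable on $\mathcal H$ with $\nabla J$ nonexpansive (i.e. $1$-Lipschitz), then $\nabla J$ is firmly nonexpansive, that is, $\|\nabla J(x)-\nabla J(y)\|^2\le\langle\nabla J(x)-\nabla J(y),\,x-y\rangle$ for all $x,y\in\mathcal H$ (the condition (i) in Definition 2.4).

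The first ingredient I would record is the standard descent lemma: since $\nabla J$ is $1$-Lipschitz, $J(u)\le J(v)+\langle\nabla J(v),u-v\rangle+\tfrac12\|u-v\|^2$ for all $u,v$, which follows by writing $J(u)-J(v)=\int_0^1\langle\nabla J(v+t(u-v)),u-v\rangle\,dt$ and bounding $\langle\nabla J(v+t(u-v))-\nabla J(v),u-v\rangle\le t\|u-v\|^2$ by Cauchy--Schwarz and the Lipschitz bound. The key step is then to introduce, for a fixed $x\in\mathcal H$, the auxiliary function $\varphi_x(z):=J(z)-\langle\nabla J(x),z\rangle$. It is convex and differentiable with $\nabla\varphi_x(z)=\nabla J(z)-\nabla J(x)$, again $1$-Lipschitz, and $\nabla\varphi_x(x)=0$; by convexity $x$ is therefore a global minimizer of $\varphi_x$. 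Applying the descent lemma to $\varphi_x$ at the point $z-\nabla\varphi_x(z)$ gives $\varphi_x\big(z-\nabla\varphi_x(z)\big)\le\varphi_x(z)-\tfrac12\|\nabla\varphi_x(z)\|^2$, and combining this with $\varphi_x(x)\le\varphi_x\big(z-\nabla\varphi_x(z)\big)$ and unwinding the definition of $\varphi_x$ yields
\[
J(x)\le J(z)+\langle\nabla J(x),\,x-z\rangle-\tfrac12\|\nabla J(z)-\nabla J(x)\|^2 .
\]

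To finish, I would set $z=y$ in this inequality, write down the analogous inequality with the roles of $x$ and $y$ interchanged, and add the two. The terms $J(x)$ and $J(y)$ cancel, the inner-product terms combine into $\langle\nabla J(x)-\nabla J(y),x-y\rangle$, and the quadratic terms combine into $-\|\nabla J(x)-\nabla J(y)\|^2$, leaving exactly $\|\nabla J(x)-\nabla J(y)\|^2\le\langle\nabla J(x)-\nabla J(y),x-y\rangle$; rescaling back to general $\pi$ gives the assertion. I expect the only genuine obstacle to be the discovery of the right auxiliary function $\varphi_x$ together with the realization that evaluating the descent lemma precisely at $z-\nabla\varphi_x(z)$ is a sharp ``completing the square'' — sharp enough that the inequality survives the symmetrization at the end. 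Everything else (the descent lemma, convexity giving the minimizer, the final addition) is routine.
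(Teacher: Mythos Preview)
Your argument is correct and is essentially the standard proof of the Baillon--Haddad theorem (the ``descent lemma + auxiliary linearized function + symmetrize'' route that also appears, e.g., in Nesterov's treatment). Each step checks out: the descent lemma is derived correctly from the $1$-Lipschitz bound, the auxiliary function $\varphi_x$ indeed has a global minimizer at $x$ by convexity, the evaluation at $z-\nabla\varphi_x(z)$ gives exactly the $-\tfrac12\|\nabla\varphi_x(z)\|^2$ gain, and the final symmetrization produces the firm nonexpansiveness inequality on the nose. The reduction to $\pi=1$ at the outset is also fine.

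Note, however, that the paper does not supply its own proof of this lemma: it is stated as a citation to Bauschke--Combettes (Corollary 18.16) and used as a black box later in the proof of Theorem~3.2. So there is no ``paper's proof'' to compare against; your write-up simply fills in a result the authors chose to quote rather than prove.
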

\begin{lem}
((Composition of averaged operators [4, Theorem 3]). Let
$\alpha_{1}\in ]0, 1[$, $\alpha_{2}\in  ]0, 1]$, $T_{1}\in\mathcal
{A}(\mathcal {H},\alpha_{1})$, and $T_{2}\in\mathcal {A}(\mathcal
{H},\alpha_{2})$. Then $T_{1}\circ T_{2}\in \mathcal {A}(\mathcal
{H},\alpha')$,
 where
$$\alpha':=\frac{\alpha_{1}+\alpha_{2}-2\alpha_{1}\alpha_{2}}{1-\alpha_{1}\alpha_{2}}.$$
\end{lem}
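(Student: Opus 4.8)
The plan is to argue directly from the quantitative description of averagedness recorded in (2.3): an operator $T$ belongs to $\mathcal{A}(\mathcal{H},\alpha)$ exactly when
$$\|Tx-Ty\|^{2}\leq \|x-y\|^{2}-\frac{1-\alpha}{\alpha}\,\|(I-T)x-(I-T)y\|^{2},\qquad \forall x,y\in\mathcal{H}.$$
Fix $x,y\in\mathcal{H}$ and put $u=T_{2}x$, $v=T_{2}y$, so that $(T_{1}\circ T_{2})x=T_{1}u$ and $(T_{1}\circ T_{2})y=T_{1}v$. Applying (2.3) to $T_{2}$ on the pair $(x,y)$, then to $T_{1}$ on the pair $(u,v)$, and chaining the two bounds (the $\|u-v\|^{2}$ term produced by the second inequality being exactly what the first one controls), I get
$$\|T_{1}u-T_{1}v\|^{2}\leq \|x-y\|^{2}-\frac{1-\alpha_{2}}{\alpha_{2}}\,\|a\|^{2}-\frac{1-\alpha_{1}}{\alpha_{1}}\,\|b\|^{2},$$
where $a:=(x-u)-(y-v)$ and $b:=(u-T_{1}u)-(v-T_{1}v)$.

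Next I would note the identity $(I-T_{1}\circ T_{2})x-(I-T_{1}\circ T_{2})y=(x-T_{1}u)-(y-T_{1}v)=a+b$. Hence, after cancelling the common term $\|x-y\|^{2}$, verifying $T_{1}\circ T_{2}\in\mathcal{A}(\mathcal{H},\alpha')$ reduces to the elementary inequality
$$\frac{1-\alpha'}{\alpha'}\,\|a+b\|^{2}\leq \frac{1-\alpha_{2}}{\alpha_{2}}\,\|a\|^{2}+\frac{1-\alpha_{1}}{\alpha_{1}}\,\|b\|^{2}.$$
To obtain this I would invoke Young's inequality $\|a+b\|^{2}\leq(1+t)\|a\|^{2}+(1+t^{-1})\|b\|^{2}$, valid for every $t>0$, and then pick $t$ so that $(1+t)\frac{1-\alpha'}{\alpha'}=\frac{1-\alpha_{2}}{\alpha_{2}}$ and $(1+t^{-1})\frac{1-\alpha'}{\alpha'}=\frac{1-\alpha_{1}}{\alpha_{1}}$. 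Writing $c_{i}=\frac{1-\alpha_{i}}{\alpha_{i}}$ and eliminating $t$ from these two equations forces $\frac{1-\alpha'}{\alpha'}=\frac{c_{1}c_{2}}{c_{1}+c_{2}}$, and a short algebraic simplification turns this into precisely $\alpha'=\frac{\alpha_{1}+\alpha_{2}-2\alpha_{1}\alpha_{2}}{1-\alpha_{1}\alpha_{2}}$, with the corresponding value $t=\frac{\alpha_{1}(1-\alpha_{2})}{\alpha_{2}(1-\alpha_{1})}$, which is a genuine positive number since $\alpha_{1}\in\,]0,1[$ and $\alpha_{2}\in\,]0,1[$.

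The main obstacle is the bookkeeping: making the two applications of (2.3) telescope correctly, and then reverse-engineering the right splitting parameter $t$ in Young's inequality rather than guessing it. Two small points remain to be dispatched at the end. First, the degenerate case $\alpha_{2}=1$ (where the $t$ above would vanish): here $T_{2}$ is merely nonexpansive, $\alpha'=1$, and since an averaged operator is nonexpansive, $T_{1}\circ T_{2}$ is a composition of nonexpansive maps, hence nonexpansive, i.e.\ $1$-averaged, so the claim is immediate. Second, one should record that $\alpha'\in\,]0,1]$ (with $\alpha'<1$ whenever $\alpha_{2}<1$), so that the conclusion $T_{1}\circ T_{2}\in\mathcal{A}(\mathcal{H},\alpha')$ is meaningful.
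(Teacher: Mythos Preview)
Your argument is correct. The paper does not supply its own proof of this lemma; it is simply quoted from the literature (Ogura--Yamada, reference [4, Theorem~3]), so there is nothing in the paper to compare against. What you wrote is essentially the standard proof of the result: apply the quantitative characterization (2.3) of averagedness to $T_{2}$ and then to $T_{1}$, telescope the two inequalities, and close the gap with the Young-type bound $\|a+b\|^{2}\le(1+t)\|a\|^{2}+(1+t^{-1})\|b\|^{2}$, solving for the $t$ that equalises the coefficients. Your algebra is right, the value $t=\alpha_{1}(1-\alpha_{2})/(\alpha_{2}(1-\alpha_{1}))$ is the correct choice, and your handling of the boundary case $\alpha_{2}=1$ (where $\alpha'=1$ and the conclusion reduces to nonexpansiveness of a composition of nonexpansive maps) is appropriate.
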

\begin{prop}
([5,6]). Let  $\tilde{H}$ be a Hilbert space, and the operators $T :
\tilde{H}\rightarrow\tilde{H}$ be given. If the mappings
$\{T_{i}\}^{N} _{i=1}$ are averaged and have a common fixed point,
then
$$\bigcap_{i=1}^{N} Fix(T_{i})=Fix(T_{1}\cdots T_{N}).$$
Here the notation $Fix(T ) \equiv Fix T$ denotes the set of fixed
points of the mapping $T$ ; that is, $Fix T := \{x \in \tilde{H} : T
x = x\}$.
\end{prop}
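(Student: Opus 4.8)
The plan is to dispatch the easy inclusion by inspection and then do the real work on the reverse one. For $\bigcap_{i=1}^{N}Fix(T_{i})\subseteq Fix(T_{1}\cdots T_{N})$ there is nothing to prove: if $T_{i}x=x$ for every $i$, then applying $T_{N},T_{N-1},\dots,T_{1}$ in turn leaves $x$ unchanged, so $T_{1}\cdots T_{N}x=x$. All the content is in the reverse inclusion, and this is where the non-emptiness of $\bigcap_{i}Fix(T_{i})$ will be used in an essential way.

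For the reverse inclusion I would fix a point $z\in\bigcap_{i=1}^{N}Fix(T_{i})$ and exploit the quantitative averagedness inequality (2.3): since each $T_{i}$ is $\alpha_{i}$-averaged with $\alpha_{i}\in\,]0,1[$, taking the second argument in (2.3) to be $z$ and using $(I-T_{i})z=0$ gives, for every $u\in\tilde{H}$,
$$\|T_{i}u-z\|^{2}\leq\|u-z\|^{2}-\frac{1-\alpha_{i}}{\alpha_{i}}\,\|(I-T_{i})u\|^{2}.$$
Then, given $x\in Fix(T_{1}\cdots T_{N})$, I would walk along the composition chain, setting $u_{0}:=x$ and $u_{k}:=T_{N-k+1}u_{k-1}$ so that $u_{N}=T_{1}\cdots T_{N}x=x=u_{0}$, apply the displayed bound at each step, and add the $N$ inequalities. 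The $\|u_{k}-z\|^{2}$ terms telescope, and since $u_{N}=u_{0}$ the outer terms cancel, leaving
$$0\leq-\sum_{k=1}^{N}\frac{1-\alpha_{N-k+1}}{\alpha_{N-k+1}}\,\|(I-T_{N-k+1})u_{k-1}\|^{2}.$$

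Because every coefficient $\frac{1-\alpha_{i}}{\alpha_{i}}$ is strictly positive, each summand must vanish, hence $T_{N-k+1}u_{k-1}=u_{k-1}$ for all $k$; this forces $u_{0}=u_{1}=\cdots=u_{N}=x$, and substituting back yields $T_{i}x=x$ for every $i$, i.e. $x\in\bigcap_{i=1}^{N}Fix(T_{i})$. A clean alternative would be to prove the case $N=2$ by exactly this telescoping argument and then induct on $N$, invoking Lemma 2.3 to see that $T_{2}\cdots T_{N}$ is again averaged and that its fixed point set is non-empty (by the induction hypothesis) before composing it with $T_{1}$. I expect the only genuine obstacle to be keeping track of strict averagedness: the positivity of the constants $\frac{1-\alpha_{i}}{\alpha_{i}}$ is precisely what makes the telescoped sum collapse, so the hypothesis that the $T_{i}$ are averaged — rather than merely nonexpansive — is not cosmetic and cannot be dropped.
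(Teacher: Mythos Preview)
Your argument is correct and is, in fact, the standard telescoping proof that appears in the cited sources. Note, however, that the paper does not supply its own proof of this proposition: it is stated as a known result with a citation to [5,6] (Byrne; Combettes) and used as a black box. So there is no ``paper's proof'' to compare against---your write-up simply fills in what the authors chose to quote from the literature, and it does so along exactly the lines one finds in those references: apply the averagedness inequality (2.3) with the second argument taken at a common fixed point $z$, walk along the chain $u_{0}=x,\,u_{k}=T_{N-k+1}u_{k-1}$, telescope, and use $u_{N}=u_{0}$ to force every residual $(I-T_{i})u_{k-1}$ to vanish. Your remark that strict averagedness (so that $(1-\alpha_{i})/\alpha_{i}>0$) is essential is also to the point.
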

Averaged mappings are useful in the convergence analysis, due to the following
result.
\begin{prop}
([7]). Let $T : \tilde{H}\rightarrow\tilde{H}$ an averaged mapping.
Assume that $T$ has
a bounded orbit, i.e., $\{T ^{k}x^{0}\}^{\infty}_{k=0}$ is bounded for some $x^{0} \in \tilde{H}$. Then we have:\\
(i) $T$ is asymptotically regular, that is, $\lim_{k\rightarrow\infty}\|T^{k+1}x-T ^{k}x\|=0$, for all $x \in \tilde{H}$;\\
(ii) for any $x \in \tilde{H}$, the sequence $\{T
^{k}x\}^{\infty}_{k=0}$ converges  to a fixed point of $T$.
\end{prop}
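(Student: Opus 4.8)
The plan is to concentrate all the analytic work in one place — extracting a fixed point of $T$ from the bounded orbit — and then to read both assertions off the Krasnosel'skii--Mann theorem (Lemma 2.1). Two cost-free preliminaries first: being $\alpha$-averaged with $\alpha\in\,]0,1[$, $T$ is nonexpansive, so $\|T^{k}x-T^{k}x^{0}\|\le\|x-x^{0}\|$ for every $k$ and every $x\in\tilde H$; hence the existence of one bounded orbit forces \emph{all} orbits to be bounded, and it is enough to fix a single $x\in\tilde H$ and argue with the sequence $u_{k}:=T^{k}x$.

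The heart of the matter is that $Fix(T)\neq\emptyset$: averagedness by itself does not ensure this, so the bounded-orbit hypothesis must be brought in. I would use the asymptotic-center argument. Let $c$ be the asymptotic center of $\{u_{k}\}_{k}$, i.e.\ the minimizer over $\tilde H$ of $r(y):=\limsup_{k}\|u_{k}-y\|^{2}$; such a minimizer exists and is unique because, in a Hilbert space, $r$ is coercive and strictly convex, the parallelogram identity giving $r\big(\tfrac{y_{1}+y_{2}}{2}\big)\le\tfrac12 r(y_{1})+\tfrac12 r(y_{2})-\tfrac14\|y_{1}-y_{2}\|^{2}$. Since $u_{k+1}=Tu_{k}$ and $T$ is nonexpansive,
\[
r(Tc)=\limsup_{k}\|u_{k+1}-Tc\|^{2}=\limsup_{k}\|Tu_{k}-Tc\|^{2}\le\limsup_{k}\|u_{k}-c\|^{2}=r(c),
\]
so $Tc$ is a minimizer of $r$ as well, and uniqueness forces $Tc=c$; that is, $c\in Fix(T)$. (In finite dimensions one can instead pass to a convergent subsequence of $\{u_{k}\}$, using that $\|u_{k+1}-u_{k}\|$ is nonincreasing; in a general Hilbert space the demiclosedness of $I-T$ at $0$ serves the same purpose once part (i) is known.)

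Granting $Fix(T)\neq\emptyset$, part (ii) is now a direct application of Lemma 2.1. Write $T$ as $\tfrac1\delta$-averaged with $\delta=1/\alpha$; since $\alpha<1$ we have $\delta>1$, so the constant relaxation $\rho_{k}\equiv1$ is admissible ($0\le1\le\delta$ and $\sum_{k}\rho_{k}(\delta-\rho_{k})=\sum_{k}(\delta-1)=\infty$), and the recursion $x^{k+1}=x^{k}+\rho_{k}(Tx^{k}-x^{k})$ becomes $x^{k}=T^{k}x$, which Lemma 2.1 declares convergent to some $\bar x\in Fix(T)$. Part (i) then follows from $\|T^{k+1}x-T^{k}x\|\le\|T^{k+1}x-\bar x\|+\|\bar x-T^{k}x\|\to0$; alternatively (i) can be proved first and independently, by inserting the pair $(T^{k}x,z)$ with $z\in Fix(T)$ into the $\alpha$-averagedness inequality (2.3) and telescoping, which yields $\tfrac{1-\alpha}{\alpha}\sum_{k}\|T^{k+1}x-T^{k}x\|^{2}\le\|x-z\|^{2}<\infty$. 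The one genuinely delicate step is the nonemptiness of $Fix(T)$; everything else is bookkeeping.
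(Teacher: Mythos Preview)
Your argument is sound. The paper does not actually prove this proposition: it is quoted as a preliminary result with a bare citation to Goebel--Kirk~[7], so there is no ``paper's own proof'' to compare against. What you have supplied is a correct self-contained proof that fits neatly into the paper's framework, since it reduces part~(ii) to Lemma~2.1 (Krasnosel'skii--Mann) once $Fix(T)\neq\emptyset$ is secured via the asymptotic-center argument, and then reads part~(i) off either the convergence in~(ii) or the telescoping of inequality~(2.3).

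Two minor remarks. First, in a general Hilbert space Lemma~2.1 (as stated in Bauschke--Combettes~[3]) yields only weak convergence, so strictly speaking your conclusion in~(ii) is weak convergence of $\{T^{k}x\}$ to a fixed point; the paper is informal on this point throughout and mostly works in Euclidean spaces, where the distinction disappears. Second, your chain $r(Tc)=\limsup_{k}\|u_{k+1}-Tc\|^{2}$ silently uses that shifting the index does not change the $\limsup$; this is of course true, but a reader might appreciate the one-word justification. Neither point is a gap.
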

The so-called demiclosedness principle for nonexpansive mappings will often be
used.
\begin{lem}
((Demiclosedness Principle [7]). Let $C$ be a closed and convex
subset of a Hilbert space $\tilde{H}$ and let $T : C\rightarrow C$
be a nonexpansive mapping with $Fix T\neq\emptyset $. If $
\{x^{k}\}^{\infty}_{k=1}$ is a sequence in $C$ weakly converging to
$x$ and if $\{(I-T )x^{k}\}^{\infty}_{k=1}$ converges strongly to
$y$, then $(I-T )x = y$. In particular, if $y = 0$, then $x \in Fix
T$.
\end{lem}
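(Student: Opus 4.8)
\noindent\emph{Proof plan.} The plan is to argue directly from the definition of nonexpansiveness together with the elementary interaction between weak and strong convergence, rather than invoking Opial's inequality. Since $C$ is closed and convex it is weakly closed, so the weak limit $x$ of $\{x^{k}\}$ belongs to $C$ and $Tx$ is well defined. Put $u^{k}:=(I-T)x^{k}$, so that by hypothesis $u^{k}\to y$ strongly and $Tx^{k}=x^{k}-u^{k}\rightharpoonup x-y$. The goal is to show $(I-T)x=y$, equivalently $\norm{(x-Tx)-y}^{2}=0$.

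First I would record two expansions. Expanding the square and using $x^{k}\rightharpoonup x$ to kill the cross term,
$$\norm{x^{k}-Tx}^{2}=\norm{x^{k}-x}^{2}+\norm{x-Tx}^{2}+2\langle x^{k}-x,\,x-Tx\rangle,$$
so that $\norm{x^{k}-Tx}^{2}-\norm{x^{k}-x}^{2}\to\norm{x-Tx}^{2}$. On the other hand, since $T$ is nonexpansive, $\norm{Tx^{k}-Tx}\le\norm{x^{k}-x}$ for every $k$, and writing $Tx^{k}=x^{k}-u^{k}$ and expanding gives
$$\norm{x^{k}-Tx}^{2}-2\langle x^{k}-Tx,\,u^{k}\rangle+\norm{u^{k}}^{2}=\norm{(x^{k}-Tx)-u^{k}}^{2}\le\norm{x^{k}-x}^{2}.$$

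Next I would combine the two. Rearranging the second display yields, for every $k$,
$$\norm{x^{k}-Tx}^{2}-\norm{x^{k}-x}^{2}\le 2\langle x^{k}-Tx,\,u^{k}\rangle-\norm{u^{k}}^{2}.$$
By the first display the left-hand side tends to $\norm{x-Tx}^{2}$. On the right-hand side, $x^{k}-Tx\rightharpoonup x-Tx$ while $u^{k}\to y$ strongly, so $\langle x^{k}-Tx,\,u^{k}\rangle\to\langle x-Tx,\,y\rangle$ (a weakly convergent sequence is bounded, which handles the term $\langle x^{k}-Tx,\,u^{k}-y\rangle$), and $\norm{u^{k}}^{2}\to\norm{y}^{2}$. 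Passing to the limit gives $\norm{x-Tx}^{2}\le 2\langle x-Tx,\,y\rangle-\norm{y}^{2}$, i.e. $\norm{(x-Tx)-y}^{2}\le 0$, hence $(I-T)x=y$. The case $y=0$ then reads $x=Tx$, i.e. $x\in Fix\,T$.

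The only place requiring care is the last step: the inequality coming from nonexpansiveness must be written for each fixed $k$ and only afterwards passed to the limit, and one must track which factors converge weakly and which strongly so that the mixed inner product $\langle x^{k}-Tx,\,u^{k}\rangle$ is treated correctly; this is the main (and rather mild) obstacle. I would also note that the hypothesis $Fix\,T\neq\emptyset$ is not actually used in the argument, though it is of course harmless to keep it.
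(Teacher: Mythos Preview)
Your argument is correct. The paper does not supply its own proof of this lemma; it is simply quoted from the literature (reference [7]), so there is nothing in the paper to compare your approach against directly.

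That said, it is worth noting that the textbook proof one usually sees (and the one implicit in the cited reference) proceeds via Opial's property: in a Hilbert space, if $x^{k}\rightharpoonup x$ then $\liminf_{k}\norm{x^{k}-x}<\liminf_{k}\norm{x^{k}-z}$ for every $z\neq x$; one then applies this with $z=Tx$ (after reducing to $y=0$ by translating) and derives a contradiction from nonexpansiveness. Your route is different in spirit: instead of invoking Opial, you expand $\norm{x^{k}-Tx}^{2}-\norm{x^{k}-x}^{2}$ explicitly and pass to the limit on both sides of the nonexpansiveness inequality, exploiting only the elementary fact that the inner product of a weakly convergent sequence with a strongly convergent one converges. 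This is slightly more hands-on but has the virtue of being entirely self-contained and of handling the general $y$ directly without first reducing to $y=0$. Your remark that the hypothesis $Fix\,T\neq\emptyset$ is superfluous is also correct.
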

\begin{lem}
(The Resolvent Identity [8,9]). For $\lambda>0$ and $\nu>0$ and $x\in \tilde{E}$, where $\tilde{E}$ is a Banach sapce, \\
$$J_{\lambda}x=J_{\nu}(\frac{\nu}{\lambda}+(1-\frac{\nu}{\lambda})J_{\lambda}x).$$
\end{lem}

\section{A primal-dual splitting algorithm
 with dynamic stepsize }
\subsection{Derivation of the algorithm}

For Problem (1.1), Condat [1] considered a primal-dual splitting
method as follows:

$$
\left\{
\begin{array}{l}
\tilde{y}^{k+1}=prox_{\sigma h^{\ast}}(y^{k}+\sigma Dx^{k}),\\
\tilde{x}^{k+1}=prox_{\tau g}(x^{k}-\tau \nabla f(x^{k})-\tau
D^{\ast}(2\tilde{y}^{k+1}-y^{k})),\\
(x^{k+1}, y^{k+1})=\rho_{k}(\tilde{x}^{k+1},
\tilde{y}^{k+1})+(1-\rho_{k})(x^{k}, y^{k})
\end{array}
\right.\eqno{(3.1)}
$$
Then, the corresponding algorithm is given below, called Algorithm
1.
\begin{algorithm}[H]
\caption{A primal-dual splitting algorithm(PDS).}
\begin{algorithmic}\label{1}
\STATE Initialization: Choose $x^{0}\in \mathcal{X}$, $ y^{0}\in
\mathcal{Y}$, relaxation parameters $(\rho_{k})_{k\in \mathbb{N}}$, and proximal\\
~~~~~~~~~~~~~~~~~~ parameters $\sigma>0$, $\tau>0$.\\
Iterations ($k\geq0$): Update $x^{k}$, $y^{k}$ as follows
$$
\left\{
\begin{array}{l}
\tilde{y}^{k+1}=prox_{\sigma h^{\ast}}(y^{k}+\sigma Dx^{k}),\\
\tilde{x}^{k+1}=prox_{\tau g}(x^{k}-\tau \nabla f(x^{k})-\tau
D^{\ast}(2\tilde{y}^{k+1}-y^{k})),\\
(x^{k+1}, y^{k+1})=\rho_{k}(\tilde{x}^{k+1},
\tilde{y}^{k+1})+(1-\rho_{k})(x^{k}, y^{k}).
\end{array}
\right.
$$
end for
\end{algorithmic}
\end{algorithm}
For Algorithm 1, the author given the following Theorem.

\begin{thm}([1])
Let $\sigma>0$, $\tau>0$ and the sequences $(\rho_{k})_{k\in
\mathbb{N}}$, be the parameters of Algorithms 1. Let $\beta$ be the
Lipschitz constant and suppose that $\beta>0$. Then the following
hold:\\
(i) $\frac{1}{\tau}-\sigma\|D\|^{2}>0$,\\
(ii) $\forall k\in \mathbb{N}$, $\rho_{k}\in]0,\delta[$, where
$\delta=2-\frac{\beta}{2}(\frac{1}{\tau}-\sigma\|D\|^{2})^{-1}\in[1,2[$,\\
(iii) $\sum_{k\in\mathbb{N}}\rho_{k}(\delta-\rho_{k})=+\infty$.\\
Let the sequences $(x^{k},y^{k})$ be generated by Algorithms 1. Then
the sequence $\{x_{k}\}$ converges to a solution of Problem (1.1).
\end{thm}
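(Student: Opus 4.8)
The plan is to recast iteration (3.1) as a relaxed Krasnosel'skii--Mann iteration for an averaged operator on the product space $\mathcal{Z}:=\mathcal{X}\times\mathcal{Y}$, equipped with the inner product induced by a suitable positive definite operator, and then to invoke Lemma 2.1. First I would record the primal--dual optimality system: a pair $z=(x,y)\in\mathcal{Z}$ is such that $x$ solves (1.1) and $y$ solves the dual in (2.1) precisely when $0\in\nabla f(x)+\partial g(x)+D^{*}y$ and $0\in\partial h^{*}(y)-Dx$. Setting $C(x,y):=(\nabla f(x),0)$ and $B(x,y):=\big(\partial g(x)+D^{*}y\big)\times\big(\partial h^{*}(y)-Dx\big)$, this reads $0\in(B+C)z$, where $C$ is cocoercive and $B$ is maximally monotone (a separable subdifferential plus the bounded skew-symmetric map $(x,y)\mapsto(D^{*}y,-Dx)$); by Assumption 2.1 and the duality (2.1), $S:=\mathrm{zer}(B+C)\neq\emptyset$. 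Introduce
$$
V:=\begin{pmatrix}\tfrac1\tau I & -D^{*}\\ -D & \tfrac1\sigma I\end{pmatrix}\quad\text{on }\mathcal{Z}.
$$
A Schur-complement computation shows that condition (i), $\tfrac1\tau-\sigma\|D\|^{2}>0$, is equivalent to $V\succ0$, so $\langle z,z'\rangle_{V}:=\langle Vz,z'\rangle$ defines an inner product on $\mathcal{Z}$; the same computation gives $(V^{-1})_{11}=\big(\tfrac1\tau I-\sigma D^{*}D\big)^{-1}$, and one checks $\delta\in[1,2[$ under the stated hypotheses.

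Next I would verify that the first two lines of (3.1) compute $(\tilde x^{k+1},\tilde y^{k+1})=T(x^{k},y^{k})$, where $T:=J_{V^{-1}B}\circ(I-V^{-1}C)$ with $J_{V^{-1}B}:=(I+V^{-1}B)^{-1}$ is the preconditioned forward--backward operator for $0\in(B+C)z$ in the $V$-metric; this is a direct substitution using the block form of $V$, the definition of $\mathrm{prox}$, and the identity $(I+\partial J)^{-1}=\mathrm{prox}_{J}$. The third line is then the relaxation $z^{k+1}=z^{k}+\rho_{k}(Tz^{k}-z^{k})$, and the standard fixed-point characterization of forward--backward operators gives $\mathrm{Fix}(T)=\mathrm{zer}(B+C)=S\neq\emptyset$, with the $x$-component of any fixed point solving (1.1).

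The core step is to show that $T$ is $\tfrac1\delta$-averaged on $(\mathcal{Z},\langle\cdot,\cdot\rangle_{V})$. Since $V\succ0$, $V^{-1}B$ is maximally monotone for $\langle\cdot,\cdot\rangle_{V}$, hence $J_{V^{-1}B}$ is firmly nonexpansive, i.e. $\tfrac12$-averaged, in that metric. By the Baillon--Haddad theorem (Lemma 2.2), $\nabla f$ is $\tfrac1\beta$-cocoercive; exploiting that $C$ acts only on the primal block, $\langle Cz-Cz',V^{-1}(Cz-Cz')\rangle=\langle\xi,(V^{-1})_{11}\xi\rangle\le(\tfrac1\tau-\sigma\|D\|^{2})^{-1}\|Cz-Cz'\|^{2}$ with $\xi:=\nabla f(x)-\nabla f(x')$, and combining this with the cocoercivity of $\nabla f$ yields that $V^{-1}C$ is $\mu$-cocoercive in the $V$-metric with $\mu=(\tfrac1\tau-\sigma\|D\|^{2})/\beta$. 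Hence $I-V^{-1}C$ is $\alpha_{2}$-averaged there, $\alpha_{2}=\tfrac1{2\mu}=\tfrac\beta2(\tfrac1\tau-\sigma\|D\|^{2})^{-1}\in\,]0,1]$, and Lemma 2.3 with $\alpha_{1}=\tfrac12$ gives
$$
\alpha'=\frac{\tfrac12+\alpha_{2}-\alpha_{2}}{1-\tfrac12\alpha_{2}}=\frac{1}{2-\alpha_{2}}=\frac1\delta .
$$
I expect this step --- carrying cocoercivity through the non-Euclidean metric while tracking the sharp constant $\delta$, in particular using the block structure of $C$ to replace a crude bound on $\|V^{-1}\|$ by the norm of $(V^{-1})_{11}$ --- to be the main obstacle; the rest is routine bookkeeping.

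Finally, $(\mathcal{Z},\langle\cdot,\cdot\rangle_{V})$ is a Hilbert space, $T$ is $\tfrac1\delta$-averaged with $\mathrm{Fix}(T)\neq\emptyset$, and by (ii)--(iii) the parameters satisfy $0\le\rho_{k}\le\delta$ and $\sum_{k}\rho_{k}(\delta-\rho_{k})=+\infty$; Lemma 2.1 therefore yields that $z^{k}=(x^{k},y^{k})$ converges to some $z^{*}\in\mathrm{Fix}(T)$. As $\mathcal{Z}$ is finite-dimensional, all norms are equivalent, so $x^{k}\to x^{*}$ in the original norm, and $x^{*}$ solves Problem (1.1), as established above.
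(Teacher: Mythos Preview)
Your proposal is correct and follows essentially the same route as the paper: the paper's Lemma~3.1 records exactly your preconditioned forward--backward decomposition (with $P$, $A$, $B$ playing the roles of your $V$, $B$, $C$), obtains the $\tfrac{1}{\delta}$-averagedness of $T$ in the $P$-metric via the same Baillon--Haddad/composition argument (Lemmas~2.2 and~2.3), and then concludes by invoking the Krasnosel'skii--Mann Lemma~2.1. Your treatment of the cocoercivity constant through the block $(V^{-1})_{11}$ is the same idea the paper uses (in the proof of Theorem~3.2) via the projector $Q:(x,y)\mapsto(x,0)$ and positivity of $P-\beta\kappa Q$.
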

The fixed point characterization provided by Condat [1] suggests
solving Problem (1.1 ) via the fixed point iteration scheme (3.1)
for a suitable value of the parameter $\sigma>0$, $\tau>0$. This
iteration, which is referred to as a primal-dual splitting method
for convex optimization involving Lipschitzian, proximable and
linear composite terms. A very natural idea is to provide a more
general iteration in which the coeffiient $\sigma>0$ and $\tau>0$
are made iteration-dependent to solve the general Problem (1.1),
then we can obtain the following iteration scheme:
$$
\left\{
\begin{array}{l}
\tilde{y}^{k+1}=prox_{\sigma_{k} h^{\ast}}(y^{k}+\sigma_{k} Dx^{k}),\\
\tilde{x}^{k+1}=prox_{\tau_{k} g}(x^{k}-\tau_{k} \nabla
f(x^{k})-\tau_{k}
D^{\ast}(2\tilde{y}^{k+1}-y^{k})),\\
(x^{k+1}, y^{k+1})=\rho_{k}(\tilde{x}^{k+1},
\tilde{y}^{k+1})+(1-\rho_{k})(x^{k}, y^{k})
\end{array}
\right.\eqno{(3.2)}
$$
which produces our proposed method Algorithm 3.2, described below.
This algorithm can also be deduced from the fixed point formulation,
whose detail we will give in the following. On the other hand, since
the parameter $\sigma_{k}>0$ and $\tau_{k}>0$ are dynamic, so we
call our method a primal-dual splitting algorithm with dynamic
stepsize, and abbreviate it as PDSDS. If $\sigma_{k}\equiv\sigma$
and $\tau_{k}\equiv\tau$ then form (3.1) is equivalent to form
(3.2). So PDS can be seen as a special case of PDSDS.

\begin{algorithm}[H]
\caption{A primal-dual splitting algorithm with dynamic
stepsize(PDSDS).}
\begin{algorithmic}\label{1}
\STATE Initialization: Choose $x^{0}\in \mathcal{X}$, $ y^{0}\in
\mathcal{Y}$, relaxation parameters $(\rho_{k})_{k\in \mathbb{N}}$, and proximal\\
~~~~~~~~~~~~~~~~~~ parameters $\liminf_{k\rightarrow\infty}\sigma_{k}>0$,
, $\liminf_{k\rightarrow\infty}\tau_{k}>0$.\\
Iterations ($k\geq0$): Update $x^{k}$, $y^{k}$ as follows
$$
\left\{
\begin{array}{l}
\tilde{y}^{k+1}=prox_{\sigma_{k} h^{\ast}}(y^{k}+\sigma_{k} Dx^{k}),\\
\tilde{x}^{k+1}=prox_{\tau_{k} g}(x^{k}-\tau_{k} \nabla
f(x^{k})-\tau_{k}
D^{\ast}(2\tilde{y}^{k+1}-y^{k})),\\
(x^{k+1}, y^{k+1})=\rho_{k}(\tilde{x}^{k+1},
\tilde{y}^{k+1})+(1-\rho_{k})(x^{k}, y^{k})
\end{array}
\right.
$$
end for
\end{algorithmic}
\end{algorithm}
Now, we claim  the convergence results for Algorithms 2.
\begin{thm}
Assume that the minimization Problem (1.1) is consistent,
$\liminf_{k\rightarrow\infty}\sigma_{k}>0$, and
$\liminf_{k\rightarrow\infty}\tau_{k}>0$.  Let the sequences
$(\rho_{k})_{k\in \mathbb{N}}$, be the parameters of Algorithms 2.
Let $\beta$ be the Lipschitz constant and suppose that $\beta>0$.
Then the following
hold:\\
(i) $\frac{1}{\liminf_{k\rightarrow\infty}\tau_{k}}-\liminf_{k\rightarrow\infty}\sigma_{k}\|D\|^{2}>\frac{\beta}{2}$,\\
(ii) $\forall k\in \mathbb{N}$, $\rho_{k}\in]0,\delta_{k}[$, where
$\delta_{k}=2-\frac{\beta}{2}(\frac{1}{\tau_{k}}-\sigma_{k}\|D\|^{2})^{-1}\in[1,2[$,\\
(iii)$0<\liminf_{k\rightarrow\infty}\rho_{k}\leq\limsup_{k\rightarrow\infty}\rho_{k}<\limsup_{k\rightarrow\infty}\delta_{k}$ and $1\leq\liminf_{k\rightarrow\infty}\delta_{k}\leq\limsup_{k\rightarrow\infty}\delta_{k}<2$. \\
Let the sequences $(x^{k},y^{k})$ be generated by Algorithms 2. Then
the sequence $\{x_{k}\}$ converges to a solution of Problem (1.1).
\end{thm}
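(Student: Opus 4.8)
The plan is to reduce the convergence of PDSDS (Algorithm 2) to the Krasnosel'skii--Mann framework of Lemma 2.1, exactly as Condat does for the fixed-step case in Theorem 3.1, but now accounting for the iteration-dependent parameters $\tau_k$ and $\sigma_k$. The first step is to rewrite the iteration (3.2) as a fixed-point iteration. Following Condat~[1], one introduces the product space $\mathcal{Z}=\mathcal{X}\times\mathcal{Y}$ equipped, for fixed $\tau,\sigma$, with the inner product $\langle(x,y),(x',y')\rangle_{\tau,\sigma}=\tfrac{1}{\tau}\langle x,x'\rangle+\tfrac{1}{\sigma}\langle y,y'\rangle-2\langle Dx,y'\rangle$; under hypothesis (i), the condition $\tfrac{1}{\tau}-\sigma\|D\|^2>\tfrac{\beta}{2}>0$ guarantees this is a genuine (equivalent) inner product on $\mathcal{Z}$. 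One then checks that the first two lines of (3.2) define an operator $T_k:(x^k,y^k)\mapsto(\tilde x^{k+1},\tilde y^{k+1})$ which, in the $\langle\cdot,\cdot\rangle_{\tau_k,\sigma_k}$ metric, is $\tfrac{1}{\delta_k}$-averaged with $\delta_k=2-\tfrac{\beta}{2}(\tfrac{1}{\tau_k}-\sigma_k\|D\|^2)^{-1}\in[1,2[$; this is the content of hypothesis (ii), and it follows from the firm nonexpansiveness of $\operatorname{prox}_{\sigma_k h^*}$ and $\operatorname{prox}_{\tau_k g}$ (the resolvents of maximally monotone operators, as recalled after Definition 2.5), together with the Baillon--Haddad Theorem (Lemma 2.2) applied to $\nabla f$, which is $\tfrac{1}{\beta}$-cocoercive. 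The third line of (3.2) is then precisely the relaxation step $z^{k+1}=z^k+\rho_k(T_kz^k-z^k)$.

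The second step is to identify the fixed-point set: one shows $\operatorname{Fix}(T_k)$ is independent of $k$ and coincides with the set of primal-dual solutions of (1.1), i.e. if $(x^\star,y^\star)$ is a fixed point then $x^\star$ solves (1.1) and $y^\star$ solves its dual (2.1). This uses Assumption 2.1 (the qualification condition $0\in\mathrm{ri}(\mathrm{dom}\,h-D\,\mathrm{dom}\,g)$), which ensures the KKT system is equivalent to the primal-dual optimality conditions, and the consistency assumption guarantees this set is nonempty. Crucially, since the proximity-operator optimality conditions do not involve $\tau_k,\sigma_k$ except as positive scalings, $\operatorname{Fix}(T_k)=\operatorname{Fix}(T_0)=:Z^\star\neq\emptyset$ for all $k$.

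The third step — and this is where the genuine work beyond Condat's argument lies — is to push the Krasnosel'skii--Mann convergence through despite the metric $\langle\cdot,\cdot\rangle_{\tau_k,\sigma_k}$ changing at every iteration. The plan is: (a) use hypothesis (iii) together with $\liminf\tau_k>0$, $\liminf\sigma_k>0$ and hypothesis (i) to conclude that $\delta_k$ is eventually bounded away from both $1$ below and $2$ above, and $\rho_k$ eventually bounded away from $0$ and from $\limsup\delta_k$, so that $\sum_k\rho_k(\delta_k-\rho_k)=+\infty$; (b) since the $\tau_k,\sigma_k$ lie in a compact subset of $(0,\infty)$, all the inner products $\langle\cdot,\cdot\rangle_{\tau_k,\sigma_k}$ are uniformly equivalent to a fixed reference norm on $\mathcal{Z}$, so the Fejér-type inequality $\|z^{k+1}-z^\star\|_{\tau_k,\sigma_k}^2\le\|z^k-z^\star\|_{\tau_k,\sigma_k}^2-\tfrac{\delta_k-\rho_k}{\rho_k}\|z^{k+1}-z^k\|_{\tau_k,\sigma_k}^2$ obtained from $\alpha$-averagedness (inequality (2.3) in Definition 2.6) can be converted, up to multiplicative constants, into statements about the fixed reference norm; one then shows $(z^k)$ is bounded, $\|z^{k+1}-z^k\|\to0$, and (passing to the limit along the convergent subsequence of $(\tau_k,\sigma_k)$, whose limiting metric defines a limiting averaged operator $T_\infty$) applies the demiclosedness principle (Lemma 2.4) to deduce every weak cluster point of $(z^k)$ lies in $Z^\star$; (c) a standard Opial-type argument then upgrades this to convergence of the whole sequence $(z^k)=(x^k,y^k)$ to a single point of $Z^\star$, whence $x^k$ converges to a solution of (1.1). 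The main obstacle is step (c): the Fejér monotonicity is with respect to a drifting norm, so one must argue that the quantities $\|z^k-z^\star\|_{\tau_k,\sigma_k}^2$ are asymptotically comparable across $k$ — this is where $\liminf\tau_k>0$, $\liminf\sigma_k>0$ (ensuring no degeneration) and the convergence/boundedness of $(\tau_k),(\sigma_k)$ enter essentially, and one typically extracts a limiting metric and shows the limit of $\|z^k-z^\star\|$ exists in that metric for each $z^\star\in Z^\star$.
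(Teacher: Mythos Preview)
Your plan follows essentially the same route as the paper: recast (3.2) as relaxed Krasnosel'skii--Mann iterations of $\tfrac{1}{\delta_k}$-averaged operators $T^k$ in the $P_k$-metric, establish a Fej\'er-type inequality, extract a limiting operator $T$ along a subsequence where $(\tau_{k_j},\sigma_{k_j})$ converges, and apply the demiclosedness principle (Lemma~2.4) to identify cluster points as fixed points. The one concrete device the paper invokes that you leave implicit is the resolvent identity (Lemma~2.5), used in (3.20) to bound $\|T^{k_j}z^{k_j}-Tz^{k_j}\|$ in terms of $|P_{k_j}^{-1}-P^{-1}|$; conversely, your explicit handling of the drifting metric in step~(c) is actually more careful than the paper's own argument, which writes the Fej\'er inequality (3.14)--(3.15) in a single norm $\|\cdot\|_P$ without fully reconciling this with the fact that $S^k$ is nonexpansive only in $\|\cdot\|_{P_k}$.
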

We consider the case where $D$ is injective(in
particular, it is implicit that dim$(\mathcal{X})\leq$
dim$(\mathcal{Y}))$. In the latter case, we denote by $\mathcal{R}$
= Im$(D)$ the image of $D$ and by $D^{-1}$ the inverse of $D$ on
$\mathcal{R}\rightarrow\mathcal{X}$. We emphasize the fact that the
inclusion $\mathcal{R}\subset\mathcal{Y}$ might be strict. We denote
by $\nabla$ the gradient operator. We make the following
assumption:

\begin{ass}
The following facts holds true:\\
 (1)$D$ is injective;\\
 (2)$\nabla(f\circ D)^{-1}$  is L-Lipschitz continuous on $\mathcal {R}$.
\end{ass}

For proximal parameters $\liminf_{k\rightarrow\infty}\mu_{k}>0$,
 $\liminf_{k\rightarrow\infty}\tau_{k}>0$, we consider the following algorithm which we shall refer to as
ADMMDS$^{+}$.
\begin{algorithm}[H]
\caption{ADMMDS$^{+}$.}
\begin{algorithmic}\label{1}
\STATE Iterations ($k\geq0$): Update $x^{k}$, $u^{k}$, $y^{k}$,
$z^{k}$ as follows
$$
\left\{
\begin{array}{l}
z^{k+1}=\arg\min_{w\in\mathcal {Y}}[h(w)+\frac{\|w-(Dx^{k}+\mu_{k}y^{k})\|^{2}}{2\mu_{k}}],~~~~~~~~~~~~~~~~~~~~~~~~~~~~~~~~~~~~~~~~~~~~~(a)\\
y^{k+1}=y^{k}+\mu_{k}^{-1}(Dx^{k}-z^{k+1}),~~~~~~~~~~~~~~~~~~~~~~~~~~~~~~~~~~~~~~~~~~~~~~~~~~~~~~~~~~~~~~(b)\\
u^{k+1}=(1-\tau_{k}\mu_{k}^{-1})Dx^{k}+\tau_{k}\mu_{k}^{-1}z^{k+1},~~~~~~~~~~~~~~~~~~~~~~~~~~~~~~~~~~~~~~~~~~~~~~~~~~~~~(c)\\
x^{k+1}=\arg\min_{w\in\mathcal {X}}[g(w)+\langle\nabla
f(x^{k}),w\rangle+\frac{\|Dw-u^{k+1}-\tau_{k}y^{k+1}\|^{2}}{2\tau_{k}}]~~~~~~~~~~~~~~~~~~~~~~~~(d)
\end{array}
\right.
$$
end for
\end{algorithmic}
\end{algorithm}
\begin{thm}
Assume that the minimization Problem (1.1) is consistent,
$\liminf_{k\rightarrow\infty}\mu_{k}>0$, and
$\liminf_{k\rightarrow\infty}\tau_{k}>0$.  Let Assumption 2.1 and
Assumption 3.1 hold true and
$\frac{1}{\liminf_{k\rightarrow\infty}\tau_{k}}-\frac{1}{\liminf_{k\rightarrow\infty}\mu_{k}}>\frac{L}{2}$.
Let the sequences $(x^{k},y^{k})$ be generated by Algorithms 3. Then
the sequence $\{x_{k}\}$ converges to a solution of Problem (1.1).
\end{thm}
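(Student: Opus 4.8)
The plan is to deduce the statement from Theorem 3.2 (the convergence theorem for PDSDS) by exhibiting ADMMDS$^{+}$ as a particular instance of PDSDS run on a reformulated problem. Since $D$ is injective (Assumption 3.1(1)), put $\mathcal{R}=Im(D)$ and introduce the change of variable $v=Dx$; for $v\in\mathcal{R}$ one has $x=D^{-1}v$, and Problem (1.1) becomes equivalent to
$$\min_{v\in\mathcal{Y}}\ F(v)+G(v)+h(v),$$
with $F:=f\circ D^{-1}$ and $G:=g\circ D^{-1}+\iota_{\mathcal{R}}$. This is again of the form (1.1), but with the linear operator equal to the identity $I$ on $\mathcal{Y}$ (so that $\|I\|=1$), with $f,g$ replaced by $F,G$, and with $h$ unchanged; by Assumption 3.1(2) the gradient $\nabla F$ is $L$-Lipschitz on $\mathcal{R}$, so $L$ takes over the role of $\beta$. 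Before invoking Theorem 3.2 one must check that Assumption 2.1 passes to the reformulated problem: consistency is inherited from consistency of (1.1), and the qualification condition $0\in ri(dom\,h-dom\,G)$ follows from $0\in ri(dom\,h-D\,dom\,g)$ since $dom\,G=D\,dom\,g$.

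Next I would fix the parameter dictionary $\sigma_k:=\mu_k^{-1}$, keep $\tau_k$ as is, and take the relaxation parameters $\rho_k\equiv1$ (this value is in fact forced by matching the dual updates). With these choices, condition (i) of Theorem 3.2 for the reformulated problem, namely $1/\liminf_k\tau_k-\liminf_k\sigma_k\|I\|^{2}>L/2$, follows from the hypothesis $1/\liminf_k\tau_k-1/\liminf_k\mu_k>L/2$ (and $\liminf_k\sigma_k>0$ provided $\sup_k\mu_k<\infty$); moreover $\delta_k=2-\tfrac{L}{2}(1/\tau_k-\mu_k^{-1})^{-1}$ then lies in $]1,2[$, so $\rho_k\equiv1\in\,]0,\delta_k[$ and conditions (ii)--(iii) hold. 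The crux is to verify that, written in the variables $(v^k,y^k)=(Dx^k,y^k)$, the ADMMDS$^{+}$ iteration of Algorithm 3 coincides with the PDSDS iteration (3.2) for the reformulated problem under this dictionary. One eliminates the auxiliary variables as follows: step (a) reads $z^{k+1}=prox_{\mu_k h}(v^k+\mu_k y^k)$, and combining this with step (b) and Moreau's decomposition $prox_{\mu_k h}(w)+\mu_k\,prox_{h^{*}/\mu_k}(w/\mu_k)=w$ gives $y^{k+1}=prox_{\sigma_k h^{*}}(y^k+\sigma_k v^k)$, i.e.\ the dual update of (3.2) with linear operator $I$; step (c) is designed exactly so that, after the substitution $w=D^{-1}v$ in step (d) and the chain-rule identity $\langle\nabla f(x^k),D^{-1}v\rangle=\langle\nabla F(v^k),v\rangle$ for $v\in\mathcal{R}$, the resulting $x$-update becomes the primal update of (3.2) carrying the over-relaxed dual iterate $2y^{k+1}-y^k$.

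Once the equivalence is established, Theorem 3.2 applied to the reformulated problem gives $v^k\to v^{\star}$ for some minimizer $v^{\star}$; since $D^{-1}$ is a bounded linear bijection on the finite-dimensional subspace $\mathcal{R}$, $x^k=D^{-1}v^k\to D^{-1}v^{\star}$, and $D^{-1}v^{\star}$ solves Problem (1.1). The main obstacle is the crux step: the careful bookkeeping needed to match ADMMDS$^{+}$'s particular update order and auxiliary variables $z^k,u^k$ with the over-relaxed primal-dual recursion (3.2), together with the technical points arising because $F$ and $G$ are defined only on the proper subspace $\mathcal{R}\subset\mathcal{Y}$ --- namely, arguing cocoercivity of $\nabla F$ within $\mathcal{R}$ via the Baillon--Haddad theorem (Lemma 2.2), and checking that the proximal steps keep $v^k$ in $\mathcal{R}$ so that the reduction is consistent.
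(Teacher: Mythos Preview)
Your proposal is correct and follows essentially the same route as the paper: reduce to a PDSDS instance on $\mathcal{R}=Im(D)$ via the change of variable $v=Dx$, with $\bar f=f\circ D^{-1}$, $\bar g=g\circ D^{-1}$, linear operator equal to the identity, $\sigma_k=\mu_k^{-1}$, and no relaxation ($\rho_k\equiv 1$); the paper packages this special case as Lemma~3.2 and then, just as you do, matches the ADMMDS$^{+}$ updates to the iterates (3.24) (it defers the Moreau-decomposition bookkeeping to \cite{[2]}, whereas you spell it out). Your treatment is in fact more explicit than the paper's on the qualification condition for the reformulated problem and on the parenthetical need for $\sup_k\mu_k<\infty$ to ensure $\liminf_k\sigma_k>0$.
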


\subsection{Proofs of convergence}
From the proof of Theorem 3.1 for Algorithm 1, we know that
Algorithm 1 has the structure of a forward-backward iteration, when
expressed in terms of nonexpansive operators on $\mathcal {Z} :=
\mathcal {X}\times\mathcal {Y}$, equipped with a particular inner
product.
\par
Let the inner product $\langle\cdot,\cdot\rangle_{I}$ in $\mathcal
{Z}$ be defined as
$$\langle z,z'\rangle:=\langle x,x'\rangle+\langle y,y'\rangle,~~~~~\forall z=(x,y),~ z'=(x',y')\in\mathcal {Z}.\eqno{(3.3)}$$
By endowing $\mathcal {Z}$ with this inner product, we obtain the
Euclidean space denoted by $\mathcal {Z}_{I}$ . Let us define the
bounded linear operator on $\mathcal {Z}$,

$$
P:= \left(
  \begin{array}{ccccccc}
    x  \\
    y  \\

  \end{array}
\right)\mapsto\left(
  \begin{array}{ccccccc}
    \frac{1}{\tau} & -D^{\ast}\\
    -D & \frac{1}{\sigma}\\

  \end{array}
\right)\left(
  \begin{array}{ccccccc}
    x  \\
    y  \\

  \end{array}
\right).\eqno{(3.4)}
$$
From the assumptions $\beta >0$ and (i), we can easily check that
$P$ is positive definite. Hence, we can define another inner product
$\langle\cdot,\cdot\rangle_{P}$ and norm
$\|\cdot\|_{P}=\langle\cdot,\cdot\rangle_{P}^{\frac{1}{2}}$ in
$\mathcal {Z}$ as
$$\langle z,z'\rangle_{P}=\langle z,z'\rangle_{I}.\eqno{(3.5)}$$
We denote by $\mathcal {Z}_{P}$ the corresponding Euclidean space.
\begin{lem}
( [1]). Let the conditions (i)-(iv) in Theorem 3.1 be ture . For
every $n\in \mathbb{N}$, the following inclusion is satisfied by
$\tilde{z}^{k+1} := (\tilde{x}^{k+1}, \tilde{y}^{k+1})$ computed by
Algorithm 1:
$$\tilde{z}^{k+1}:=(I+P^{-1}\circ A)^{-1}\circ(I-P^{-1}\circ B)(z^{k}),\eqno{(3.6)}$$
where $$ A:= \left(
  \begin{array}{ccccccc}
    \partial g & D^{\ast} \\
    -D & \partial h^{\ast} \\

  \end{array}
\right),
 B:= \left(
  \begin{array}{ccccccc}
    \nabla f \\
   0 \\

  \end{array}
\right).
$$
Set $M_{1}=P^{-1}\circ A$, $M_{2}=P^{-1}\circ B$,
$T_{1}=(I+M_{1})^{-1}$, $T_{2}=(I-M_{2})^{-1}$, and $T=T_{1}\circ
T_{2}$. Then $T_{1}\in\mathcal {A}(\mathcal {Z}_{P},\frac{1}{2})$
and $T_{2}\in\mathcal {A}(\mathcal {Z}_{P},\frac{1}{2\kappa})$,
$\kappa:=(\frac{1}{\tau}-\sigma\|D\|^{2})/\beta$. Then $T\in\mathcal
{A}(\mathcal {Z}_{P},\frac{1}{\delta})$ and
$\delta=2-\frac{1}{2\kappa}$.

\end{lem}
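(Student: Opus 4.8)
The plan is to exhibit Algorithm 1 as a preconditioned forward--backward iteration for the Fenchel--Rockafellar inclusion and then to read off the averagedness constants operator by operator. First I would record that a pair $z^{\ast}=(x^{\ast},y^{\ast})$ is primal--dual optimal for (1.1) if and only if $0\in(A+B)z^{\ast}$, with $A$ and $B$ as in the statement; this is just the KKT system $0\in\partial g(x^{\ast})+\nabla f(x^{\ast})+D^{\ast}y^{\ast}$ and $Dx^{\ast}\in\partial h^{\ast}(y^{\ast})$. Then I would write the first-order optimality conditions of the two proximal subproblems in (3.1): the $\tilde y$-step gives $\sigma^{-1}(y^{k}-\tilde y^{k+1})+Dx^{k}\in\partial h^{\ast}(\tilde y^{k+1})$, and the $\tilde x$-step gives $\tau^{-1}(x^{k}-\tilde x^{k+1})-\nabla f(x^{k})-D^{\ast}(2\tilde y^{k+1}-y^{k})\in\partial g(\tilde x^{k+1})$. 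Collecting both lines in the variable $z=(x,y)$, the cross terms reorganize into the skew-adjoint part of $A$ and the off-diagonal of $P$ (this is exactly the role of the over-relaxation $2\tilde y^{k+1}-y^{k}$), and the two inclusions become the single inclusion $0\in A\tilde z^{k+1}+Bz^{k}+P(\tilde z^{k+1}-z^{k})$, i.e.\ (3.6) after applying $P^{-1}$. Along the way I would note that $A$ is maximally monotone on $\mathcal Z_{I}$, being the sum of the separable subdifferential $\partial g\times\partial h^{\ast}$ and the bounded skew-adjoint (hence maximally monotone, full-domain) operator $(x,y)\mapsto(D^{\ast}y,-Dx)$.

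For $T_{1}$: condition (i) of Theorem 3.1 together with $\beta>0$ makes $P$ positive definite (its Schur complement is $\frac{1}{\tau}I-\sigma D^{\ast}D\succ0$), so (3.5) defines a genuine Euclidean norm and $\mathcal Z_{P}$ is a legitimate space for resolvent calculus. The identity $\langle P^{-1}u-P^{-1}v,\xi\rangle_{P}=\langle u-v,\xi\rangle_{I}$ shows that $M_{1}=P^{-1}A$ is maximally monotone on $\mathcal Z_{P}$; hence $T_{1}=(I+M_{1})^{-1}$ is the resolvent of a maximally monotone operator, so it is firmly nonexpansive on $\mathcal Z_{P}$, that is $T_{1}\in\mathcal A(\mathcal Z_{P},\frac{1}{2})$.

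The estimate for the forward step $T_{2}=I-M_{2}$ is the point I expect to be the crux, precisely because the inner product changes. Baillon--Haddad (Lemma 2.2) gives that $\nabla f$ is $\frac{1}{\beta}$-cocoercive in the ordinary inner product, so $B$ is $\frac{1}{\beta}$-cocoercive on $\mathcal Z_{I}$. To pass to $\mathcal Z_{P}$ I would compute the upper-left block of $P^{-1}$, namely $(\frac{1}{\tau}I-\sigma D^{\ast}D)^{-1}$, and bound its operator norm by $(\frac{1}{\tau}-\sigma\|D\|^{2})^{-1}$ using $\|D^{\ast}D\|=\|D\|^{2}$ and (i). Since increments of $B$ have the block form $(w_{1},0)$, this yields $\|M_{2}u-M_{2}v\|_{P}^{2}=\langle Bu-Bv,P^{-1}(Bu-Bv)\rangle_{I}\le(\frac{1}{\tau}-\sigma\|D\|^{2})^{-1}\|Bu-Bv\|_{I}^{2}\le\kappa^{-1}\langle M_{2}u-M_{2}v,u-v\rangle_{P}$, where $\kappa=(\frac{1}{\tau}-\sigma\|D\|^{2})/\beta$; that is, $M_{2}$ is $\kappa$-cocoercive on $\mathcal Z_{P}$. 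Expanding $\|(u-v)-(M_{2}u-M_{2}v)\|_{P}^{2}$ then gives $\|T_{2}u-T_{2}v\|_{P}^{2}\le\|u-v\|_{P}^{2}-(2\kappa-1)\|(I-T_{2})u-(I-T_{2})v\|_{P}^{2}$, which by (2.3) is exactly $T_{2}\in\mathcal A(\mathcal Z_{P},\frac{1}{2\kappa})$ (note $\kappa\ge\frac{1}{2}$ follows from (i)--(ii), so $\frac{1}{2\kappa}\le1$).

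Finally I would apply the composition rule for averaged operators (Lemma 2.3) with $\alpha_{1}=\frac{1}{2}$ and $\alpha_{2}=\frac{1}{2\kappa}$; since $2\alpha_{1}\alpha_{2}=\alpha_{2}$,
$$\alpha'=\frac{\alpha_{1}+\alpha_{2}-2\alpha_{1}\alpha_{2}}{1-\alpha_{1}\alpha_{2}}=\frac{1/2}{1-1/(4\kappa)}=\frac{2\kappa}{4\kappa-1}=\frac{1}{2-1/(2\kappa)},$$
so $T=T_{1}\circ T_{2}\in\mathcal A(\mathcal Z_{P},\frac{1}{\delta})$ with $\delta=2-\frac{1}{2\kappa}$, as claimed. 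The only genuinely delicate point is the cocoercivity transfer for $M_{2}$: obtaining the sharp constant $\kappa$ — and hence the stepsize requirement $\frac{1}{\tau}-\sigma\|D\|^{2}>\frac{\beta}{2}$ rather than a weaker one — needs the Schur-complement identification of the $(1,1)$-block of $P^{-1}$ instead of the crude bound $P^{-1}\preceq\lambda_{\min}(P)^{-1}I$; the rest is bookkeeping, modulo keeping track of signs and the placement of the over-relaxation term in Step 1.
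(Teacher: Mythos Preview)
Your proposal is correct and follows essentially the same route as the paper (which quotes this lemma from Condat~[1] and reproves it, with $k$-dependent parameters, inside the proof of Theorem~3.2): rewrite the two prox steps as the single inclusion $0\in A\tilde z^{k+1}+Bz^{k}+P(\tilde z^{k+1}-z^{k})$, note that $M_{1}=P^{-1}A$ is maximally monotone in $\mathcal Z_{P}$ so $T_{1}$ is firmly nonexpansive, establish $\kappa$-cocoercivity of $M_{2}=P^{-1}B$ in $\mathcal Z_{P}$ via the Schur-complement bound on the $(1,1)$-block of $P^{-1}$, and then compose with Lemma~2.3.

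The one place where your argument differs slightly from the paper's Theorem~3.2 proof is the order of operations for $M_{2}$: you apply Baillon--Haddad to $\nabla f$ in $\mathcal X$ first and then transfer the cocoercivity inequality to $\mathcal Z_{P}$ directly, obtaining $\kappa\|M_{2}u-M_{2}v\|_{P}^{2}\le\langle M_{2}u-M_{2}v,u-v\rangle_{P}$ in one shot. The paper instead first shows $\pi M_{2}$ is merely nonexpansive in $\mathcal Z_{P}$ (via the same Schur bound plus the inequality $P-\beta\pi Q\succeq 0$ with $Q:(x,y)\mapsto(x,0)$), then observes that $M_{2}$ is the $\mathcal Z_{P}$-gradient of $(x,y)\mapsto f(x)$ and invokes Baillon--Haddad \emph{in $\mathcal Z_{P}$} to upgrade nonexpansiveness to firm nonexpansiveness. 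Both paths yield the identical constant $\kappa$; yours is a bit more direct since it avoids having to recognize $M_{2}$ as a gradient in the preconditioned metric, while the paper's version makes that gradient structure explicit.
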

In association with  Lemma 2.1 and Lemma 3.1, we obtained Theorem
3.1
\par
Now, we are ready to prove Theorem 3.2

\begin{proof}
By setting $$ P_{k}:= \left(
  \begin{array}{ccccccc}
    \frac{1}{\tau_{k}} & -D^{\ast}\\
    -D & \frac{1}{\sigma_{k}}\\

  \end{array}
\right),
$$
then the Algorithm 3.2 can be described as  follows:
$$\tilde{z}^{k+1}:=(I+P^{-1}_{k}\circ A)^{-1}\circ(I-P^{-1}_{k}\circ B)(z^{k}).\eqno{(3.7)}$$
Considering  the relaxation step, we obtain
$$z^{k+1}:=\rho_{k}(I+P^{-1}_{k}\circ A)^{-1}\circ(I-P^{-1}_{k}\circ B)(z^{k})+(1-\rho_{k})z^{k}.\eqno{(3.8)}$$
Let $M_{1}^{k}=P^{-1}_{k}\circ A$, $M_{2}^{k}=P^{-1}_{k}\circ B$,
$T_{1}^{k}=(I+M_{1}^{k})^{-1}$, $T_{2}^{k}=(I-M_{2}^{k})^{-1}$, and
$T^{k}=T_{1}^{k}\circ T_{2}^{k}$. Then $T_{1}^{k}\in\mathcal
{A}(\mathcal {Z}_{P},\frac{1}{2})$[3, Corollary 23.8].

 First, let us prove the
cocoercivity of  $M_{2}^{k} $. Since the sequence $\tau_{k}$ is
bounded, there exists a convergent subsequence converges to $\tau$
without loss of generality, we may assume that the convergent
subsequence is $\tau_{k}$ itself, then we have
$\frac{1}{\tau_{k}}\rightarrow\frac{1}{\tau}$, so $\forall
\varepsilon>0$, $\exists N_{1}$, such that when $n\geq N_{1}$,
$\frac{1}{\tau_{k}}\geq\frac{1}{\tau}-\varepsilon$. With the same idea, for sequence $\sigma_{k}$, we aslo have $\sigma_{k}\rightarrow\sigma$, then for the above $\varepsilon$
$\exists N_{2}$, such that when $n\geq N_{2}$, $\sigma_{k}\leq\sigma+\varepsilon$. Set $N_{0}=\max\{N_{1},N_{2}\}$, when  $n\geq N_{0}$, we have $\frac{1}{\tau_{k}}\geq\frac{1}{\tau}-\varepsilon$
, $\sigma_{k}\leq\sigma+\varepsilon$.
  Then for every
$z=(x,y), z'=(x',y')\in\mathcal {Z}$ and $\forall n\geq N_{0}$, we have
\begin{align*}
\|M_{2}^{k}(z)-M_{2}^{k}(z')\|_{P}^{2}&=\frac{1}{(\frac{1}{\tau_{k}}-\sigma_{k}DD^{\ast})}\|\nabla f(x)-\nabla f(x')\|^{2}\\
&+\frac{1}{(\frac{1}{\tau_{k}}-\sigma_{k}DD^{\ast})^{2}}(\frac{1}{\tau}-\frac{1}{\tau}_{k})
\|\nabla f(x)-\nabla f(x')\|^{2}\\
&+\frac{\sigma_{k}D^{2}}{(\frac{1}{\tau_{k}}-\sigma_{k}DD^{\ast})^{2}}(\frac{\sigma_{k}}{\sigma}-1)\|\nabla f(x)-\nabla f(x')\|^{2}\\
&\leq\frac{1}{(\frac{1}{\tau_{k}}-\sigma_{k}\|D\|^{2})}\|\nabla f(x)-\nabla f(x')\|^{2}\\
&+\frac{\varepsilon}{(\frac{1}{\tau_{k}}-\sigma_{k}DD^{\ast})^{2}}
\|\nabla f(x)-\nabla f(x')\|^{2}\\
&+\frac{\sigma_{k}D^{2}}{(\frac{1}{\tau_{k}}-\sigma_{k}DD^{\ast})^{2}}\frac{\varepsilon}{\sigma}\|\nabla f(x)-\nabla f(x')\|^{2}\\
&=\frac{1}{(\frac{1}{\tau_{k}}-\sigma_{k}DD^{\ast})}\|\nabla f(x)-\nabla f(x')\|^{2}\\
&+\frac{\varepsilon}{(\frac{1}{\tau_{k}}-\sigma_{k}DD^{\ast})^{2}}(1+\frac{\sigma_{k}}{\sigma}D^{2})
\|\nabla f(x)-\nabla f(x')\|^{2},
\end{align*}
by the arbitrariness of $\varepsilon$, we have
\begin{align*}
\|M_{2}^{k}(z)-M_{2}^{k}(z')\|_{P}^{2}&\leq\frac{1}{(\frac{1}{\tau_{k}}-\sigma_{k}\|D\|^{2})}\|\nabla f(x)-\nabla f(x')\|^{2}\\
&=\frac{1}{\pi_{k}\beta}\|\nabla f(x)-\nabla f(x')\|^{2}\\
&\leq\frac{\beta}{\pi_{k}}\|x-x'\|^{2},\tag{3.9}
\end{align*}
where $\pi_{k}=(\frac{1}{\tau_{k}}-\sigma_{k}\|D\|^{2})/\beta$.
We define the linear operator $Q : (x, y)
\rightarrow (x, 0)$ of $\mathcal {Z}$. Since $P -\beta\pi_{k}Q$ is positive
in $\mathcal {Z}_{I}$, we have
\begin{align*}
\beta\pi_{k}\|x-x'\|^{2}&=\beta\pi_{k}\langle(z-z'),Q(z-z')\rangle_{I}\\
&\leq\langle(z-z'),P(z-z')\rangle_{I}=\|z-z'\|^{2}_{P}.\tag{3.10}
\end{align*}
Putting together (3.9) and (3.10), we get
$$\pi_{k}\|M_{2}^{k}(z)-M_{2}^{k}(z')\|_{P}\leq\|z-z'\|^{2}_{P}.\eqno{(3.11)}$$
So that $\pi_{k}M_{2}^{k}$ is nonexpansive in $\mathcal {Z}_{P}$ .
Let us define on $\mathcal {Z}_{P}$ the function $J : (x, y)
\rightarrow P^{-1}_{k}f(x)$. Then, in $\mathcal {Z}_{P}$ , $\nabla J
= M_{2}^{k}$. Therefore, from Lemma 2.2, $\pi_{k}M_{2}^{k}$ is
firmly nonexpansive in $\mathcal {Z}_{P}$. Then
$T_{2}^{k}\in\mathcal {A}(\mathcal {Z}_{P},\frac{1}{2\pi_{k}})$ [3,
Proposition 4.33]. Hence, feom Lemma 2.3, we know $T^{k}\in\mathcal
{A}(\mathcal {Z}_{P},\frac{1}{\delta_{k}})$, and
$\delta_{k}=2-\frac{1}{2\pi_{k}}$.

Next, we will prove the convergence of Algorithm 2.\\
Since for each $n$, $T^{k}$ is $\frac{1}{\delta_{k}}$-averaged. Therefore, we can write
$$T^{k}=(1-\frac{1}{\delta_{k}})I+\frac{1}{\delta_{k}}S^{k},\eqno{(3.12)}$$
where $S^{k}$ is nonexpansive and $\frac{1}{\delta_{k}}\in]\frac{1}{2},1]$. Then we can rewrite (3.8) as
$$z^{k+1}=(1-\frac{\rho_{k}}{\delta_{k}})z^{k}+\frac{\rho_{k}}{\delta_{k}}S^{k}z^{k}=(1-\alpha_{k})z^{k}+\alpha_{k}S^{k}z^{k},\eqno{(3.13)}$$
where $\alpha_{k}=\frac{\rho_{k}}{\delta_{k}}$. Let $\hat{z} \in Fix(S)$, where $\hat{z}=(\hat{x}, \hat{y})$, then $\hat{x}$ is a solution of (1.1), noticing that $S^{k}\hat{z}=\hat{z}$, we have
\begin{align*}
\|z^{k+1}-\hat{z}\|^{2}_{P}&=(1-\alpha_{k})\|z^{k}-\hat{z}\|^{2}_{P}+\alpha_{k}\|S^{k}z^{k}-\hat{z}\|^{2}_{P}-\alpha_{k}(1-\alpha_{k})\|z^{k}-S^{k}z^{k}\|^{2}_{P}\\
&\leq\|z^{k}-\hat{z}\|^{2}_{P}-\alpha_{k}(1-\alpha_{k})\|z^{k}-S^{k}z^{k}\|^{2}_{P}.\tag{3.14}
\end{align*}
Which implies that
$$\|z^{k+1}-\hat{z}\|^{2}_{P}\leq\|z^{k}-\hat{z}\|^{2}_{P}.\eqno{(3.15)}$$
This implies that sequence $\{z^{k}\}_{k=0}^{\infty}$ is a Fej\'{e}r monotone sequence, and $\lim_{k\rightarrow\infty}\|z^{k+1}-\hat{z}\|_{P}$
exists.

 From the condition (iii) of Theorem 3.1, it is easy to find  that
$$0<\liminf_{k\rightarrow\infty}\alpha_{k}\leq\limsup_{k\rightarrow\infty}\alpha_{k}<1. $$
Therefor, there exists $\underline{a}, \overline{a}\in(0,1)$ such that $\underline{a}<\alpha_{k}< \overline{a}$.
By (3.14), we know
\begin{align*}
\underline{a}(1-\overline{a})\|z^{k}-S^{k}z^{k}\|^{2}_{P}&\leq\alpha_{k}(1-\alpha_{k})\|z^{k}-S^{k}z^{k}\|^{2}_{P}\\
&\leq\|z^{k}-\hat{z}\|^{2}_{P}-\|z^{k+1}-\hat{z}\|^{2}_{P}.
\end{align*}
Hence
$$\lim_{k\rightarrow\infty}\|z^{k}-S^{k}z^{k}\|_{P}=0.\eqno{(3.16)}$$

Since the sequence $\{z^{k}\}$ is bounded and there exists a convergent subsequence $\{z^{k_{j}}\}$
such that
$$z^{k_{j}}\rightarrow\tilde{z},\eqno{(3.17)}$$
for some $\tilde{z}\in\mathcal {X}\times\mathcal {Y}$.\\
From (3.14), we have
$$\lim_{j\rightarrow\infty}\|z^{k_{j}}-S^{k_{j}}z^{k_{j}}\|_{P}=0.\eqno{(3.18)}$$
Since the sequence $\tau_{k}$   is
bounded, there exists a  subsequence  $\tau_{k_{j}}\subset\tau_{k}$ such that
$\frac{1}{\tau_{k_{j}}}\rightarrow\frac{1}{\tau}$. With the same idea, we have
 $\sigma_{k_{j}}\rightarrow\sigma$. Then we obtain that $\delta=2-\frac{1}{2\pi}\in[1,2[$.
Therefor, we know that $T=(I+P^{-1}\circ A)^{-1}\circ(I-P^{-1}\circ B)$ is $\frac{1}{\delta}$-averaged.
So there exists a nonexpansive mapping $S$ such that
$$ T=(I+P^{-1}\circ A)^{-1}\circ(I-P^{-1}\circ B)=(1-\frac{1}{\delta})I+\frac{1}{\delta}S,$$
where $\delta_{k_{j}}\rightarrow\delta$. Because the solution of the
Problem (1.1) is consistent, we know  that
$\bigcap_{k=1}^{\infty}Fix(S^{k})=Fix(S)\neq\emptyset$. Then we will
prove $\lim_{j\rightarrow\infty}\|z^{k_{j}}-Sz^{k_{j}}\|_{P}=0.$ In
fact, we have
\begin{align*}
\|z^{k_{j}}-Sz^{k_{j}}\|_{P}&\leq\|z^{k_{j}}-S^{k_{j}}z^{k_{j}}\|_{P}+\|S^{k_{j}}z^{k_{j}}-Sz^{k_{j}}\|_{P}\\
&=\|z^{k_{j}}-S^{k_{j}}z^{k_{j}}\|_{P}+\|(1-\delta_{k_{j}})z^{k_{j}}+\delta_{k_{j}}T^{k_{j}}z^{k_{j}}-(1-\delta)z^{k_{j}}-\delta Tz^{k_{j}}\|_{P}\\
&\leq\|z^{k_{j}}-S^{k_{j}}z^{k_{j}}\|_{P}+|\delta_{k_{j}}-\delta|(\|z^{k_{j}}\|_{P}+\|Tz^{k_{j}}\|_{P})+\delta\|T^{k_{j}}z^{k_{j}}-Tz^{k_{j}}\|_{P}.\tag{3.19}
\end{align*}
Since $(I+P^{-1}_{k_{j}}\circ A)^{-1}=J_{P^{-1}_{k_{j}}A}$, $(I+P^{-1}\circ A)^{-1}=J_{P^{-1}A}$, so from Lemma 2.5 , we know that
\begin{align*}
\|T^{k_{j}}z^{k_{j}}-Tz^{k_{j}}\|_{P}&=\|(I+P^{-1}_{k_{j}}\circ A)^{-1}\circ(I-P^{-1}_{k_{j}}\circ B)z^{k_{j}}\\
&-(I+P^{-1}\circ A)^{-1}\circ(I-P^{-1}\circ B)z^{k_{j}}\|_{P}\\
&\leq\|J_{P^{-1}_{k_{j}}A}\circ(I-P^{-1}_{k_{j}}\circ B)z^{k_{j}}-J_{P^{-1}_{k_{j}}A}\circ(I-P^{-1}\circ B)z^{k_{j}}\|_{P}\\
&+\|J_{P^{-1}_{k_{j}}A}\circ(I-P^{-1}\circ B)z^{k_{j}}-J_{P^{-1}A}\circ(I-P^{-1}\circ B)z^{k_{j}}\|_{P}\\
&\leq|P^{-1}_{k_{j}}-P^{-1}|\| Bz^{k_{j}}\|_{P}+\| J_{P^{-1}A}(\frac{P^{-1}}{P^{-1}_{k_{j}}}(I-P^{-1}\circ B)z^{k_{j}}\\
&+(1-\frac{P^{-1}}{P^{-1}_{k_{j}}})J_{P^{-1}_{k_{j}}A}\circ(I-P^{-1}\circ B)z^{k_{j}})-J_{P^{-1}A}\circ(I-P^{-1}\circ B)z^{k_{j}}\|_{P}\\
&\leq|P^{-1}_{k_{j}}-P^{-1}|\| Bz^{k_{j}}\|_{P}+\| \frac{P^{-1}}{P^{-1}_{k_{j}}}(I-P^{-1}\circ B)z^{k_{j}}\\
&+(1-\frac{P^{-1}}{P^{-1}_{k_{j}}})J_{P^{-1}_{k_{j}}A}\circ(I-P^{-1}\circ B)z^{k_{j}}-(I-P^{-1}\circ B)z^{k_{j}}\|_{P}\\
&\leq|P^{-1}_{k_{j}}-P^{-1}|\| Bz^{k_{j}}\|_{P}+|1-\frac{P^{-1}}{P^{-1}_{k_{j}}}|\|J_{P^{-1}_{k_{j}}A}\circ(I-P^{-1}\circ B)z^{k_{j}}\\
&-(I-P^{-1}\circ B)z^{k_{j}}\|_{P}.\tag{3.20}
\end{align*}
Put (3.20) into (3.19), we obtain that
\begin{align*}
\|z^{k_{j}}-Sz^{k_{j}}\|_{P}&\leq\|z^{k_{j}}-S^{k_{j}}z^{k_{j}}\|_{P}+|\delta_{k_{j}}-\delta|(\|z^{k_{j}}\|_{P}+\|Tz^{k_{j}}\|_{P})\\
&+\delta|P^{-1}_{k_{j}}-P^{-1}|\| Bz^{k_{j}}\|_{P}+\delta|1-\frac{P^{-1}}{P^{-1}_{k_{j}}}|\|J_{P^{-1}_{k_{j}}A}\circ(I-P^{-1}\circ B)z^{k_{j}}\\
&-(I-P^{-1}\circ B)z^{k_{j}}\|_{P}.\tag{3.21}
\end{align*}
From $\delta_{k_{j}}\rightarrow\delta$ , $P^{-1}_{k_{j}}\rightarrow P^{-1}$ and (3.18),   we have
$$\lim_{j\rightarrow\infty}\|z^{k_{j}}-Sz^{k_{j}}\|_{P}=0.\eqno{(3.22)}$$
By Lemma 2.4, we know $\tilde{z}\in Fix(S)$. Moreover, we know that $\{\|z^{k}-\hat{z}\|_{P}\}$ is non-increasing
for any fixed point $\hat{z}$ of $S$. In particular, by choosing $\hat{z} = \tilde{z}$, we have $\{\|z^{k}-\tilde{z}\|_{P}\}$ is non-
increasing. Combining this and (3.17) yields
$$\lim_{k\rightarrow\infty}z^{k}=\tilde{z}.\eqno{(3.23)}$$
Writing $\tilde{z}=(\tilde{x},\tilde{y})$, then we have $\tilde{x}$
is the solution of Problem (1.1).

\end{proof}
Proof of Theorem 3.3 for Algorithm 3. Before providing the proof of
Theorem 3.3, let us introduce the following notation and Lemma.
\begin{lem}
 Given a Euclidean space $\mathcal {E}$, consider
the minimization problem $\min_{\lambda\in\mathcal{E}}
\bar{f}(\lambda)+\bar{g}(\lambda)+ h(\lambda)$, where $\bar{g}, h
\in \Gamma_{0}(\mathcal {E})$ and where $\bar{f}$ is convex and
differentiable on $\mathcal {E}$ with a L-Lipschitz continuous
gradient. Assume that the infimum is attained and that $0 \in ri(dom
h-dom\bar{g})$. Let $\liminf_{k\rightarrow\infty}\mu_{k}>0$,
$\liminf_{k\rightarrow\infty}\tau_{k}>0$ be such that
$\frac{1}{\liminf_{k\rightarrow\infty}\tau_{k}}-\frac{1}{\liminf_{k\rightarrow\infty}\sigma_{k}}>\frac{L}{2}$.
 , and consider the iterates
$$
\left\{
\begin{array}{l}
y^{k+1}=prox_{\mu_{k}^{-1} h^{\ast}}(y^{k}+\mu_{k}^{-1} \lambda^{k}),~~~~~~~~~~~~~~~~~~~~~~~~~~~~~~~~~~~~~~~~~~~~~~~~~~~~~~~~(3.24a)\\
\lambda^{k+1}=prox_{\tau_{k} \bar{g}}(\lambda^{k}-\tau_{k} \nabla
\bar{f}(\lambda^{k})-\tau_{k}
(2y^{k+1}-y^{k})).~~~~~~~~~~~~~~~~~~~~~~~~~~~~~~~~~~(3.24b)
\end{array}
\right.
$$
Then for any initial value $(\lambda^{0}, y^{0}) \in \mathcal
{E}\times \mathcal {E}$, the sequence $(\lambda^{k}, y^{k})$
converges to a primal-dual point $(\tilde{\lambda}, \tilde{y})$,
i.e., a solution of the equation
$$\min_{\lambda\in\mathcal{E}} \bar{f}(\lambda)+\bar{g}(\lambda)+
h(\lambda)=-\min_{y\in\mathcal{E}} (\bar{f}+\bar{g})^{\ast}(y)+
h^{\ast}(y).\eqno{(3.25)}$$
\end{lem}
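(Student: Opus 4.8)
The plan is to recognize the iteration (3.24) as a special case of PDSDS (Algorithm 2) and to invoke Theorem 3.2. Concretely, I would take $\mathcal{X}=\mathcal{Y}=\mathcal{E}$, let $D=I$ be the identity (so $D^{\ast}=I$ and $\|D\|=1$), set $f=\bar f$ (hence $\beta=L$), $g=\bar g$, keep $h$, and choose $\rho_{k}\equiv 1$ and $\sigma_{k}:=\mu_{k}^{-1}$. Under these substitutions the first two lines of (3.2) become exactly (3.24a)--(3.24b), and since $\rho_{k}=1$ the relaxation line reduces to $(\lambda^{k+1},y^{k+1})=(\tilde\lambda^{k+1},\tilde y^{k+1})$; thus (3.24) is precisely PDSDS applied to $\min_{\lambda\in\mathcal{E}}\bar f(\lambda)+\bar g(\lambda)+(h\circ D)(\lambda)$.

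Next I would check the hypotheses of Theorem 3.2. Consistency of the problem and the qualification condition $0\in ri(\mathrm{dom}\,h-D\,\mathrm{dom}\,\bar g)=ri(\mathrm{dom}\,h-\mathrm{dom}\,\bar g)$ are assumed, so Assumption 2.1 holds. Since $\liminf_{k}\mu_{k}>0$ the sequence $\sigma_{k}=\mu_{k}^{-1}$ is bounded above, $\liminf_{k}\tau_{k}>0$, and $\liminf_{k}\sigma_{k}=1/\limsup_{k}\mu_{k}\le 1/\liminf_{k}\mu_{k}$; hence condition (i), $\frac{1}{\liminf\tau_{k}}-\liminf\sigma_{k}\|D\|^{2}>\frac{\beta}{2}$, follows from the hypothesis $\frac{1}{\liminf\tau_{k}}-\frac{1}{\liminf\mu_{k}}>\frac{L}{2}$. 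For (ii)--(iii) with $\rho_{k}\equiv 1$: the hypothesis yields $\frac{1}{\tau_{k}}-\frac{1}{\mu_{k}}>\frac{L}{2}$ for all large $k$, so $\delta_{k}=2-\frac{L}{2}(\frac{1}{\tau_{k}}-\frac{1}{\mu_{k}})^{-1}\in(1,2)$ and $\rho_{k}=1\in]0,\delta_{k}[$; and because $\liminf\tau_{k}>0$ and $\liminf\mu_{k}>0$ force $\frac{1}{\tau_{k}}-\frac{1}{\mu_{k}}$ to be bounded above, one gets $\liminf\delta_{k}>1$, $\limsup\delta_{k}<2$, hence $\liminf\rho_{k}=\limsup\rho_{k}=1<\limsup\delta_{k}$, i.e. (iii). (The finitely many indices where a strict inequality could fail are harmless for the convergence argument.)

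With the hypotheses in force, Theorem 3.2 applies; moreover its proof---see the convergence statement (3.23)---shows that the whole pair $z^{k}=(\lambda^{k},y^{k})$ converges in $\mathcal{Z}_{P}$, hence in $\mathcal{E}\times\mathcal{E}$, to some $\tilde z=(\tilde\lambda,\tilde y)\in Fix(S)$. It then remains to identify $\tilde z$ as a primal-dual point. Since $Fix(S)=Fix(T)$ with $T=(I+P^{-1}A)^{-1}(I-P^{-1}B)$ and $P$ invertible, $\tilde z\in Fix(T)$ is equivalent to $0\in A\tilde z+B\tilde z$, which with $D=I$ reads
$$0\in\nabla\bar f(\tilde\lambda)+\partial\bar g(\tilde\lambda)+\tilde y,\qquad \tilde\lambda\in\partial h^{\ast}(\tilde y).$$
Equivalently $-\tilde y\in\partial(\bar f+\bar g)(\tilde\lambda)$ and $\tilde y\in\partial h(\tilde\lambda)$, so $0\in\partial(\bar f+\bar g)(\tilde\lambda)+\partial h(\tilde\lambda)$, i.e. $\tilde\lambda$ minimizes $\bar f+\bar g+h$, while dually $\tilde y$ minimizes $(\bar f+\bar g)^{\ast}(-\cdot)+h^{\ast}$; by the Fenchel-Rockafellar duality already used in (2.1) the two optimal values coincide up to sign, which is exactly (3.25). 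Hence $(\lambda^{k},y^{k})\to(\tilde\lambda,\tilde y)$ with $(\tilde\lambda,\tilde y)$ a solution of (3.25).

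The argument is routine once set up correctly; the two places needing genuine care are (a) the bookkeeping that translates the dynamic-stepsize hypothesis (together with the boundedness of $(\tau_{k})$ implicit in Theorem 3.2) into conditions (i)--(iii) under the substitution $\sigma_{k}=\mu_{k}^{-1}$, $\|D\|=1$, $\rho_{k}\equiv1$, and (b) upgrading the conclusion of Theorem 3.2---which as stated only asserts convergence of the primal iterate---to convergence of the full pair $(\lambda^{k},y^{k})$ and to the identification of the limit as a primal-dual solution, for which one must read the inclusion $0\in A\tilde z+B\tilde z$ off the proof rather than the statement. I expect (b) to be the main point to get right.
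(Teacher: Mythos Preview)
Your proposal is correct and follows exactly the paper's approach: the paper's entire proof is the single sentence ``It is easy to see that the Lemma 3.2 is a special case of Theorem 3.2. So we can obtain Lemma 3.2 from Theorem 3.2 directly.'' Your write-up is in fact considerably more careful than the paper's, since you explicitly carry out the substitution $D=I$, $\sigma_{k}=\mu_{k}^{-1}$, $\rho_{k}\equiv 1$, verify conditions (i)--(iii), and---correctly---note that the full primal-dual convergence in (3.25) must be read off the proof of Theorem 3.2 (via (3.23) and $Fix(T)=\mathrm{zer}(A+B)$) rather than its stated conclusion.
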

\begin{proof}
It is easy to see that the Lemma 3.2 is a special case of Theorem
3.2. So we can obtain Lemma 3.2 from Theorem 3.2 directly.

\end{proof}
Elaborating on Lemma 3.2, we are now ready to establish the Theorem
3.3.

By setting $\mathcal {E} = \mathcal {S}$ and by assuming that
$\mathcal {E}$ is equipped with the same inner product as $\mathcal
{Y}$, one can notice that the functions $\bar{ f} = f \circ D^{-1}$,
$\bar{ g} = g \circ D^{-1}$ and $h$ satisfy the conditions of Lemma
3.2. Moreover, since $(\bar{f}+\bar{g})^{\ast}= (f + g)^{\ast}\circ
D^{\ast}$, one can also notice that $(\tilde{x}, \tilde{y})$ is a
primal-dual point associated with Eq. (2.1) if and only if
$(D\tilde{x}, \tilde{y})$ is a primal-dual point associated with Eq.
(3.25). With the same idea for the proof of Theorem 1 of  [2], we
can recover the ADMMDS$^{+}$ from the iterations (3.24).
\subsection{Connections to other algorithms}
We will further establish the connections to other existing methods.

When $\mu_{k}\equiv\mu$ and $\tau_{k}\equiv\tau$, the ADMMDS$^{+}$
boils down to the ADMM$^{+}$ whose iterations are given by:
$$
\left\{
\begin{array}{l}
z^{k+1}=argmin_{w\in\mathcal {Y}}[h(w)+\frac{\|w-(Dx^{k}+\mu y^{k})\|^{2}}{2\mu}],\\
y^{k+1}=y^{k}+\mu^{-1}(Dx^{k}-z^{k+1}),\\
u^{k+1}=(1-\tau\mu^{-1})Dx^{k}+\tau\mu^{-1}z^{k+1},\\
x^{k+1}=argmin_{w\in\mathcal {X}}[g(w)+\langle\nabla
f(x^{k}),w\rangle+\frac{\|Dw-u^{k+1}-\tau y^{k+1}\|^{2}}{2\tau}].
\end{array}
\right.
$$
In the special case   $h \equiv 0$ , $D = I$, $\mu_{k}\equiv\mu$ and
$\tau_{k}\equiv\tau$ it can be easily verified that $y^{k}$ is null
for all $k \geq 1$ and $u^{k} = x^{k}$. Then, the ADMMDS$^{+}$ boils
down to the standard Forward-Backward algorithm whose iterations are
given by:
\begin{align*}
x^{k+1}&=argmin_{w\in\mathcal
{X}}g(w)+\frac{1}{2\tau}\|w-(x^{k}-\tau \nabla
f(x^{k}))\|^{2}\\
&=prox_{\tau g}(x^{k}-\tau \nabla f(x^{k})).
\end{align*}
One can remark that $\mu$  has disappeared thus it can be set as
large as wanted so the condition on stepsize $\tau$ from Theorem 3.3
boils down to $\tau < 2/L$. Applications of this algorithm with
particular functions appear in well known learning methods such as
ISTA [11].
\section{Coordinate descent}
\subsection{Randomized krasnosel'skii-mann iterations}
 Consider the space
$\mathcal{Z}=\mathcal{Z}_{1}\times\cdots\times\mathcal{Z}_{J}$ for
some $J\in\mathbb{N}^{\ast}$ where for any $j$, $\mathcal{Z}_{j}$ is
a Euclidean space. For $\mathcal{Z}$ equipped with the scalar
product $\langle x,y\rangle=\sum_{j=1}^{J}\langle
x_{j},y_{j}\rangle_{\mathcal{Z}_{j}}$ where $\langle
\cdot,\cdot\rangle_{\mathcal{Z}_{j}}$ is the scalar product in
$\mathcal{Z}_{j}$. For $j\in \{1,\cdots,J\}$ , let $T_{j}:
\mathcal{Z}\rightarrow\mathcal{Z}_{j}$ be the components of the
output of operator $T : \mathcal{Z}\rightarrow\mathcal{Z}$
corresponding to $\mathcal{Z}_{j}$ , so, we have
$Tx=(T_{1}x,\cdots,T_{J}x)$. Let $2^{\mathcal{J}}$ be the power set
of $\mathcal{J}=\{1,\cdots,J\}$. For any $\vartheta\in
2^{\mathcal{J}}$, we donate the operator $\hat{T}^{\vartheta}:
\mathcal{Z}\rightarrow\mathcal{Z}$ by
$\hat{T}^{\vartheta}_{j}x=T_{j}x$ for $j\in\vartheta$ and
$\hat{T}^{\vartheta}_{j}x=x_{j}$ for otherwise. On some probability
space $(\Omega, \mathcal{F}, \mathbb{P})$, we introduce a random
i.i.d. sequence $(\zeta^{k})_{k\in\mathbb{N}^{\ast}}$ such that
$\zeta^{k}:\Omega\rightarrow 2^{\mathcal{J}}$ i.e.
$\zeta^{k}(\omega)$ is a subset of $\mathcal{J}$. Assume that the
following holds:
$$\forall j\in \mathcal{J},\exists \vartheta\in 2^{\mathcal{J}}, j\in\vartheta~~~ and ~~~\mathbb{P}(\zeta_{1}=\vartheta)>0.\eqno{(4.1)}$$

\begin{lem}
(Theorem 3 of [2]). Let $T: \mathcal{Z}\rightarrow\mathcal{Z}$ be
$\eta$-averaged and Fix(T)$\neq\emptyset$. Let
$(\zeta^{k})_{k\in\mathbb{N}^{\ast}}$ be a random i.i.d. sequence on
$2^{\mathcal{J}}$ such that Condition (4.1) holds. If for all $k$,
sequence $(\beta_{k})_{k\in\mathbb{N}}$ satisfies
$$0<\liminf_{k\rightarrow\infty}\beta_{k}\leq\limsup_{k\rightarrow\infty}\beta_{k}<\frac{1}{\eta}.$$
Then, almost surely, the iterated sequence

$$x^{k+1}=x^{k}+\beta_{k}(\hat{T}^{(\zeta^{k+1})}x^{k}-x^{k})\eqno{(4.2)}$$
converges to some point in Fix($T$).
\end{lem}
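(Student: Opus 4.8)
The plan is to recast the randomized iteration \eqref{4.2} as a stochastic Fej\'er-monotone process relative to $Fix(T)$, extract almost-sure convergence of $\|x^{k}-z^{\star}\|$ for every $z^{\star}\in Fix(T)$, and then identify the (random) limit point using the demiclosedness principle (Lemma 2.4) together with Condition \eqref{4.1}. First I would fix $z^{\star}\in Fix(T)$ and compute, conditionally on the natural filtration $\mathcal{F}_{k}=\sigma(\zeta^{1},\dots,\zeta^{k})$, the expected squared distance $\mathbb{E}[\|x^{k+1}-z^{\star}\|^{2}\mid\mathcal{F}_{k}]$. The key algebraic input is that, because the selection $\zeta^{k+1}$ is independent of $x^{k}$, for each fixed vector $v$ one has $\mathbb{E}[\|\hat{T}^{(\zeta^{k+1})}v - v\|^{2}] = \sum_{j}p_{j}\|T_{j}v - v_{j}\|_{\mathcal{Z}_{j}}^{2}$ and $\mathbb{E}[\langle \hat{T}^{(\zeta^{k+1})}v - v,\, w\rangle] = \sum_{j}p_{j}\langle T_{j}v - v_{j},\,w_{j}\rangle$, where $p_{j}:=\mathbb{P}(j\in\zeta^{1})>0$ by \eqref{4.1}. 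Combining these with the $\eta$-averagedness inequality \eqref{2.3} for $T$ (applied to $x^{k}$ and $z^{\star}$), and with the bound $\limsup_{k}\beta_{k}<1/\eta$, I expect to obtain an estimate of the shape
$$\mathbb{E}\big[\|x^{k+1}-z^{\star}\|^{2}\mid\mathcal{F}_{k}\big] \le \|x^{k}-z^{\star}\|^{2} - c_{k}\sum_{j}p_{j}\|T_{j}x^{k}-x^{k}_{j}\|^{2},$$
with $c_{k}\ge c>0$ for $k$ large, by the assumption $0<\liminf_{k}\beta_{k}$. (Here one must use a suitably weighted inner product, or simply the fact that $p_{j}>0$ for all $j$, so that $\sum_j p_j\|T_jx-x_j\|^2$ controls $\|Tx-x\|^2$ up to the constant $\min_j p_j$.)

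From this inequality, the Robbins–Siegmund supermartingale lemma gives two conclusions simultaneously: almost surely $\|x^{k}-z^{\star}\|$ converges (so $(x^{k})$ is a.s. bounded), and almost surely $\sum_{k}c_{k}\sum_{j}p_{j}\|T_{j}x^{k}-x^{k}_{j}\|^{2}<\infty$, whence $\|Tx^{k}-x^{k}\|\to 0$ almost surely. This is the step where one pays attention to the standard subtlety that the null set on which convergence of $\|x^{k}-z^{\star}\|$ fails may depend on $z^{\star}$; I would handle it by choosing a countable dense subset of $Fix(T)$ (which is closed and convex, hence separable) and using continuity of $z^{\star}\mapsto\lim_{k}\|x^{k}-z^{\star}\|$ on this bounded orbit to get a single almost-sure event on which $\lim_{k}\|x^{k}-z^{\star}\|$ exists for \emph{all} $z^{\star}\in Fix(T)$.

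On that almost-sure event, boundedness of $(x^{k})$ yields a convergent subsequence $x^{k_{i}}\to\bar{x}$; since $(I-T)x^{k}\to 0$ and $T$ is nonexpansive (being $\eta$-averaged), the demiclosedness principle, Lemma 2.4, forces $\bar{x}\in Fix(T)$. Then $\lim_{k}\|x^{k}-\bar{x}\|$ exists and along the subsequence equals $0$, so the whole sequence converges to $\bar{x}\in Fix(T)$. The main obstacle I anticipate is the conditional-expectation computation in the first paragraph: one must correctly separate the cross term $\langle \hat{T}^{(\zeta^{k+1})}x^{k}-x^{k},\, x^{k}-z^{\star}\rangle$ using independence, recombine $\sum_j p_j\langle T_jx^k - x^k_j, x^k_j - z^\star_j\rangle$, and then dominate it by means of the averagedness inequality for the \emph{full} operator $T$ rather than coordinatewise — this is exactly where the positivity $p_j>0$ for every $j$ (Condition \eqref{4.1}) and the two-sided control $0<\liminf\beta_k\le\limsup\beta_k<1/\eta$ are both essential, and getting the constants right so that the coefficient of $\|Tx^k-x^k\|^2$ stays bounded away from zero is the delicate point.
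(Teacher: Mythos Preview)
Your overall strategy --- establish a supermartingale inequality for the squared distance to an arbitrary $z^{\star}\in Fix(T)$, invoke Robbins--Siegmund to get almost-sure convergence of these distances together with $\|Tx^{k}-x^{k}\|\to 0$, handle the dependence of the null set on $z^{\star}$ via a countable dense subset of $Fix(T)$, and conclude by demiclosedness --- is correct and is exactly the route the paper follows in the proof of Theorem~4.1 (the dynamic-$T^{k}$ generalization of this lemma; Lemma~4.1 itself is only cited from~[2] and not proved in the paper).

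There is one point where your sketch is too casual and your proposed ``alternative'' does not work. In the \emph{original} norm the conditional expectation computes exactly to
\[
\mathbb{E}\big[\|x^{k+1}-z^{\star}\|^{2}\,\big|\,\mathcal{F}_{k}\big]
=\|x^{k}-z^{\star}\|^{2}+\sum_{j}p_{j}\big(\|U_{j}x^{k}-z^{\star}_{j}\|^{2}-\|x^{k}_{j}-z^{\star}_{j}\|^{2}\big),
\]
with $U=(1-\beta_{k})I+\beta_{k}T$. Averagedness of $T$ controls only the \emph{unweighted} sum $\sum_{j}(\|U_{j}x^{k}-z^{\star}_{j}\|^{2}-\|x^{k}_{j}-z^{\star}_{j}\|^{2})$, and since the individual coordinate terms can have either sign, the shortcut ``use $p_{j}>0$ and bound by $\min_{j}p_{j}$'' does not yield the supermartingale inequality you wrote. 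The weighted inner product you mention parenthetically is not optional; it is the device that makes the bookkeeping close. Concretely (and this is what the paper does in the proof of Theorem~4.1), set $q_{j}^{-1}:=\sum_{\vartheta}p_{\vartheta}\mathbf{1}_{\{j\in\vartheta\}}$ and $\||x\||^{2}:=\sum_{j}q_{j}\|x_{j}\|^{2}$; then
\[
\mathbb{E}\big[\||x^{k+1}-z^{\star}\||^{2}\,\big|\,\mathcal{F}_{k}\big]
=\||x^{k}-z^{\star}\||^{2}+\big(\|Ux^{k}-z^{\star}\|^{2}-\|x^{k}-z^{\star}\|^{2}\big),
\]
and the $\eta$-averagedness inequality applies directly to the parenthesis in the original norm, giving the supermartingale estimate with respect to $\||\cdot\||$. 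After this correction your argument proceeds verbatim and matches the paper's.
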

\subsection{Randomized Modified krasnosel'skii- mann iterations}

\begin{thm}
 Let $T$  be
$\eta$-averaged and $T^{k}$ be $\eta_{k}$-averaged on $\mathcal{Z}$
and $\bigcap_{k=1}^{\infty}$Fix(T$^{k}$)=Fix(T)$\neq\emptyset$,  and
$T^{k}\rightarrow T$. Let $(\zeta^{k})_{k\in\mathbb{N}^{\ast}}$ be a
random i.i.d. sequence on $2^{\mathcal{J}}$ such that Condition
(4.1) holds. If for all $k$, sequence $(\beta_{k})_{k\in\mathbb{N}}$
satisfies
$$0<\beta_{k}<\frac{1}{\eta_{k}},0<\liminf_{k\rightarrow\infty}\beta_{k}\leq\limsup_{k\rightarrow\infty}\beta_{k}<\frac{1}{\limsup_{k\rightarrow\infty}\eta_{k}}.$$
Then, almost surely, the iterated sequence

$$x^{k+1}=x^{k}+\beta_{k}(\hat{T}^{k,(\zeta^{k+1})}x^{k}-x^{k})\eqno{(4.3)}$$
converges to some point in Fix($T$).
\end{thm}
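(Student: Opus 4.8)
## Proof proposal for Theorem 4.1

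The plan is to reduce the ``modified'' (iteration-dependent operator) setting back to the fixed-operator setting of Lemma~4.1 by a perturbation argument, treating the gap between $\hat T^{k,(\zeta^{k+1})}$ and $\hat T^{(\zeta^{k+1})}$ as a vanishing error. First I would set up the stochastic framework exactly as in Lemma~4.1: let $\mathcal F_k$ be the $\sigma$-algebra generated by $\zeta^1,\dots,\zeta^k$, so that $x^k$ is $\mathcal F_k$-measurable and $\zeta^{k+1}$ is independent of $\mathcal F_k$. The key identity is that, for a fixed $z$, $\mathbb E[\hat T^{\vartheta}z \mid \text{choice of }\vartheta] = \sum_{\vartheta} \mathbb P(\zeta_1=\vartheta)\,\hat T^{\vartheta}z$ is an averaging of coordinatewise-partial updates; denote this averaged operator by $\bar T$ (resp.\ $\bar T^k$ for $T^k$). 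A short computation, as in [2], shows $\operatorname{Fix}(\bar T)=\operatorname{Fix}(T)$ under Condition (4.1), and that the conditional expectation of $\|x^{k+1}-\hat z\|^2$ given $\mathcal F_k$ can be bounded using the $\eta_k$-averagedness of $T^k$, producing a Fej\'er-type inequality with a summable-error correction.

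Concretely, for a fixed point $\hat z\in\operatorname{Fix}(T)=\bigcap_k\operatorname{Fix}(T^k)$, I would write $\hat T^{k,(\zeta^{k+1})}x^k - x^k = (\hat T^{(\zeta^{k+1})}x^k - x^k) + e^{k+1}$, where $e^{k+1}$ collects the coordinates on which $T^k$ and $T$ disagree, so $\|e^{k+1}\|\le \|T^k x^k - T x^k\|$. Since $\{x^k\}$ will be shown bounded (from the Fej\'er estimate) and $T^k\to T$ uniformly on bounded sets (this is the natural reading of the hypothesis $T^k\to T$; in the application it follows from $P_k\to P$ and the resolvent identity, cf.\ the estimate (3.20)--(3.22) in the proof of Theorem~3.2), we get $\|e^{k+1}\|\to 0$. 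Then, expanding $\|x^{k+1}-\hat z\|_P^2$ and taking conditional expectation, the main term reproduces the Lemma~4.1 descent inequality
\begin{equation*}
\mathbb E\big[\|x^{k+1}-\hat z\|^2 \,\big|\,\mathcal F_k\big] \le \|x^k-\hat z\|^2 - c\,\mathbb E\big[\|x^k - \hat T^{(\zeta^{k+1})}x^k\|^2\,\big|\,\mathcal F_k\big] + \varepsilon_k,
\end{equation*}
with $c>0$ coming from $\liminf\beta_k>0$, $\limsup\beta_k<1/\limsup\eta_k$, and with $\varepsilon_k$ controlled by $\|e^{k+1}\|$ times bounded quantities. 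I would invoke the Robbins--Siegmund / stochastic quasi-Fej\'er lemma to conclude that $\|x^k-\hat z\|$ converges a.s.\ and $\sum_k \mathbb E[\|x^k-\hat T^{(\zeta^{k+1})}x^k\|^2\mid\mathcal F_k]<\infty$ a.s., hence a subsequence with $\hat T^{(\zeta^{k+1})}x^k - x^k\to 0$.

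From there the endgame mirrors Lemma~4.1 / the demiclosedness argument: along a convergent subsequence $x^{k_j}\to\tilde z$, one passes from $\|x^{k_j}-\hat T^{(\zeta^{k_j+1})}x^{k_j}\|\to 0$ to $\|x^{k_j}-Tx^{k_j}\|\to 0$ (using Condition (4.1) so that every coordinate is hit with positive probability, exactly the mechanism already used in [2, Theorem 3]), then apply the demiclosedness principle (Lemma~2.4) to get $\tilde z\in\operatorname{Fix}(T)$. Finally, since $\|x^k-\tilde z\|$ converges a.s.\ and has a subsequence tending to $0$, the whole sequence converges to $\tilde z\in\operatorname{Fix}(T)$ a.s. The main obstacle is the bookkeeping in the conditional-expectation step: one must show the cross terms involving $e^{k+1}$ and the randomized partial update are genuinely $o(1)$ (or summable) rather than merely bounded, which requires the boundedness of $\{x^k\}$ and the locally-uniform convergence $T^k\to T$ to be established \emph{before} the Fej\'er estimate is closed — so I would first derive a crude almost-sure boundedness of $\{x^k\}$ from the deterministic-in-$k$ part of the recursion, then feed it back. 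A secondary subtlety is that $\eta_k$ may approach $\limsup\eta_k$, so the descent coefficient $c$ must be taken uniform via the strict inequality $\limsup\beta_k < 1/\limsup\eta_k$ rather than pointwise $\beta_k<1/\eta_k$.
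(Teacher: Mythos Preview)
Your perturbation strategy --- writing $\hat T^{k,(\zeta^{k+1})}x^k = \hat T^{(\zeta^{k+1})}x^k + e^{k+1}$ and invoking a Robbins--Siegmund argument with error term $\varepsilon_k$ --- has a genuine gap that you yourself flag but do not actually close. Robbins--Siegmund requires $\sum_k \varepsilon_k<\infty$, but under the bare hypothesis $T^k\to T$ (even uniformly on bounded sets) you only obtain $\|e^{k+1}\|\to 0$, with no rate, so $\varepsilon_k\asymp \|e^{k+1}\|\cdot\|x^k-\hat z\|$ is merely $o(1)$ and need not be summable. The ``crude boundedness first'' manoeuvre does not help: boundedness of $\{x^k\}$ gives you $\varepsilon_k\to 0$, not $\sum\varepsilon_k<\infty$, and without summability the supermartingale inequality does not close and you cannot even conclude that $\|x^k-\hat z\|$ converges almost surely.

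The paper avoids this entirely by exploiting the hypothesis $\bigcap_k\operatorname{Fix}(T^k)=\operatorname{Fix}(T)$ directly rather than treating $T^k-T$ as noise. Since the reference point $\tilde x$ is a fixed point of \emph{every} $T^k$, one works with $U^k=(1-\beta_k)I+\beta_k T^k$ itself, introduces the reweighted norm $\||x\||^2=\sum_j q_j\|x_j\|^2$ with $q_j^{-1}=\mathbb P(j\in\zeta^1)$, and computes $\mathbb E[\||x^{k+1}-\tilde x\||^2\mid\mathcal F^k]=\||x^k-\tilde x\||^2+(\|U^kx^k-\tilde x\|^2-\|x^k-\tilde x\|^2)$; the $(\beta_k\eta_k)$-averagedness of $U^k$ then gives an \emph{exact} supermartingale inequality with no error term at all. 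This yields almost-sure convergence of $\||x^k-\tilde x\||$ and $(I-T^k)x^k\to 0$ directly. The assumption $T^k\to T$ is used only at the very end, on a convergent subsequence, to pass from $(I-T^{k_j})x^{k_j}\to 0$ to $(I-T)x^{k_j}\to 0$ before applying demiclosedness --- and for that step mere pointwise convergence on bounded sets suffices, no summability needed.
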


\begin{proof}
Define the operator $U^{k}=(1-\beta_{k})I+\beta_{k}T^{k}$;
similarly, define
$U^{k,(\vartheta)}=(1-\beta_{k})I+\beta_{k}T^{k,(\vartheta)}$.
Observing that the operator $U^{k}$ is $(\beta_{k}\eta_{k})$-
averaged. From (4.3), we  can know that
$x^{k+1}=U^{k,(\zeta^{k+1})}x^{k}$. Set
$p_{\vartheta}=\mathbb{P}(\zeta_{1}=\vartheta)$ for any
$\vartheta\in 2^{\mathcal{J}}$. Denote by $\|x\|^{2} = \langle x,
x\rangle$ the squared norm in $\mathcal{Z}$. Define a new inner
product $x\bullet y=\sum_{j=1}^{J}q_{j}\langle x_{j},
y_{j}\rangle_{j}$ on $\mathcal{Z}$ where
$q_{j}^{-1}=\sum_{\vartheta\in2^{\mathcal{J}}}p_{\vartheta}\mathbf{1}_{\{j\in\vartheta\}}$
and let $\||x\||^{2}= x\bullet x$ be its associated squared norm.
Consider any $x^{\ast}\in Fix(T)$. Conditionally to the sigma-field
$\mathcal {F}^{k} = \sigma(\zeta_{1}, . . . , \zeta^{k})$ we have
\begin{align*}
\mathbb{E}[\||x^{k+1}-\tilde{x}\||^{2}|\mathcal
{F}^{k}]&=\sum_{\vartheta\in2^{\mathcal{J}}}p_{\vartheta}\||U^{k,(\vartheta)}x^{k}-\tilde{x}\||^{2}\\
&=\sum_{\vartheta\in2^{\mathcal{J}}}p_{\vartheta}\sum_{j\in\vartheta}q_{j}\|U^{k}_{j}x^{k}-\tilde{x}_{j}\|^{2}+\sum_{\vartheta\in2^{\mathcal{J}}}p_{\vartheta}\sum_{j\neq\vartheta}q_{j}\|x^{k}_{j}-\tilde{x}_{j}\|^{2}\\
&=\||x^{k}-\tilde{x}\||^{2}+\sum_{\vartheta\in2^{\mathcal{J}}}p_{\vartheta}\sum_{j\in\vartheta}q_{j}(\|U^{k}_{j}x^{k}-\tilde{x}_{j}\|^{2}-\|x^{k}_{j}-\tilde{x}_{j}\|^{2})\\
&=\||x^{k}-\tilde{x}\||^{2}+\sum_{j=1}^{J}(\|U^{k}_{j}x^{k}-\tilde{x}_{j}\|^{2}-\|x^{k}_{j}-\tilde{x}_{j}\|^{2})\\
&=\||x^{k}-\tilde{x}\||^{2}+(\|U^{k}x^{k}-\tilde{x}\|^{2}-\|x^{k}-\tilde{x}\|^{2})
\end{align*}
Since $U^{k}$ is $(\beta_{k}\eta_{k})$-averaged and that $\tilde{x}$
is a fixed point of $U^{k}$, the term enclosed in the parentheses is
no larger than
$-\frac{1-\beta_{k}\eta_{k}}{\beta_{k}\eta_{k}}\|(I-U^{k})x^{k}\|^{2}$.
By $I-U^{k}=\beta_{k}(I-T^{k})$, we have:

\begin{align*}
\mathbb{E}[\||x^{k+1}-\tilde{x}\||^{2}|\mathcal {F}^{k}]
&\leq\||x^{k}-\tilde{x}\||^{2}-\beta_{k}(1-\beta_{k}\eta_{k})\|(I-T^{k})x^{k}\|^{2}\\
&\leq\||x^{k}-\tilde{x}\||^{2}-\beta_{k}\eta_{k}(1-\beta_{k}\eta_{k})\|(I-T^{k})x^{k}\|^{2},\tag{4.4}
\end{align*}
which shows that $\||x^{k}-\tilde{x}\||^{2}$ is a nonnegative
supermartingale with respect to the filtration $(\mathcal {F}^{k})$.
As such, it converges with probability one towards a random variable
that is finite almost everywhere.

Given a countable dense subset $\mathrm{Z}$ of $Fix(T)$, there is a
probability one set on which $\||x^{k}-x\||\rightarrow
\mathrm{X}_{x}\in[0,\infty)$ for all $x\in\mathrm{Z}$. Let $x\in
Fix(T)$, let $\varepsilon > 0$, and choose $x\in\mathrm{Z}$ such
that $\||\tilde{x}-x\||\leq\varepsilon$. With probability one, we
have
$$\||x^{k}-\tilde{x}\||\leq\||x^{k}-x\||+\||\tilde{x}-x\||\leq\mathrm{X}_{x}+2\varepsilon,$$
for $k$ large enough. Similarly
$\||x^{k}-\tilde{x}\||\geq\mathrm{X}_{x}-2\varepsilon$, for $k$
large enough. Therefor, we have

$\mathbf{A}_{1}$: There is a probability one set on which
$\||x^{k}-\tilde{x}\||$ converges for every $\tilde{x}\in Fix(T)$.\\
From the assumption on $(\beta_{k})_{k\in\mathbb{N}}$, we know that
$0<\liminf_{k\rightarrow\infty}\beta_{k}\eta_{k}\leq\limsup_{k\rightarrow\infty}\beta_{k}\eta_{k}<1.$
So there exits$\overline{a}, \underline{a}\in (0,1)$, such that
$\underline{a}<\beta_{k}\eta_{k}<\overline{a}$. From (4.4), we have
\begin{align*}
\underline{a}(1-\overline{a})\|(I-T^{k})x^{k}\|^{2}&\leq\alpha_{k}\beta_{k}(1-\alpha_{k}\beta_{k})\|(I-T^{k})x^{k}\|^{2}\\
&\leq\||x^{k}-\tilde{x}\||^{2}-\mathbb{E}[\||x^{k+1}-\tilde{x}\||^{2}|\mathcal
{F}^{k}].\tag{4.5}
\end{align*}

 Taking the expectations on both sides of  inequality (4.5) and iterating over $k$, we
obtain
$$\mathbb{E}\|(I-T^{k})x^{k}\|^{2}\leq\frac{1}{\underline{a}(1-\overline{a})}(x^{0}-\tilde{x})^{2}.$$
By Markov¡¯s inequality and Borel Cantelli¡¯s lemma,we therefore
obtain:

$\mathbf{A}_{2}$: $(I-T^{k})x^{k}\rightarrow0$ almost surely.\\
 We now consider an elementary event in the probability one set where $\mathbf{A}_{1}$
and $\mathbf{A}_{2}$ hold. On this event, since the sequence
$(x^{k})_{k\in \mathbb{N}}$ is bounded, so there exists a convergent
subsequence $(x^{k_{j}})_{j\in \mathbb{N}}$ such that
\begin{align*}
\lim_{j\rightarrow\infty}\||x^{k_{j}}-\hat{x}\||=0,\tag{4.6}
\end{align*}
for some $\hat{x}\in\mathcal {Z}$.\\
From $\mathbf{A}_{2}$ and the condition $T^{k}\rightarrow T$, we
have
\begin{align*}
\|x^{k_{j}}-Tx^{k_{j}}\|&\leq\|x^{k_{j}}-T^{k_{j}}x^{k_{j}}\|+\|T^{k_{j}}x^{k_{j}}-Tx^{k_{j}}\|\\
&\leq\|x^{k_{j}}-T^{k_{j}}x^{k_{j}}\|+|T^{k_{j}}-T|\|x^{k_{j}}\|\rightarrow0.\tag{4.7}
\end{align*}
It then follows from Lemma 2.4 that $\hat{x}\in Fix(T)$. Moreover,
we know that on this event,  $\||x^{k}-\tilde{x}\||$ converges for
any $\tilde{x}\in Fix(T)$. In particular, by choosing
$\tilde{x}=\hat{x}$, we see that  $\||x^{k}-\hat{x}\||$  converges.
Combining this and (4.6) yields
$$\lim_{k\rightarrow\infty}\||x^{k}-\hat{x}\||=0.$$

\end{proof}

From Theorem 3.3, we know that the ADMMDS$^{+}$ iterates are
generated by the action of a $\eta_{k}$-averaged operator. Theorem
4.1 shows then that a stochastic coordinate descent version of the
ADMMDS$^{+}$ converges towards a primal-dual point. This result will
be exploited in two directions: first, we describe a stochastic
minibatch algorithm, where a large dataset is randomly split into
smaller chunks. Second, we develop an asynchronous version of the
ADMMDS$^{+}$ in the context where it is distributed on a graph.

\section{Application to stochastic approximation}
\subsection{Problem setting}
Given an integer $N > 1$, consider the problem of minimizing a sum
of composite functions

$$\inf_{x\in\mathcal{X}}\sum_{n=1}^{N}(f_{n}(x)+g_{n}(x)),\eqno{(5.1)}$$

where we make the following assumption:

\begin{ass}
For each $n = 1, ...,N$,\\
 (1) $f_{n}$ is a convex differentiable function on $\mathcal{X}$, and its
gradient $\nabla f_{n}$ is  $1/\beta$-Lipschitz continuous on
$\mathcal{X}$ for some $\beta\in(0,+\infty)$;\\
 (2) $g_{n}\in \Gamma_{0}(\mathcal{X})$;\\
 (3) The infimum of Problem (5.1) is attained;\\
 (4) $\cap_{n=1}^{N}ridom g_{n}\neq0.$
\end{ass}

This problem arises for instance in large-scale learning
applications where the learning set is too large to be handled as a
single block. Stochastic minibatch approaches consist in splitting
the data set into $N$ chunks and to process each chunk in some
order, one at a time. The quantity $f_{n}(x) + g_{n}(x)$ measures
the inadequacy between the model (represented by parameter $x$) and
the $n$-th chunk of data. Typically, $f_{n}$ stands for a data
fitting term whereas $g_{n}$ is a regularization term which
penalizes the occurrence of erratic solutions. As an example, the
case where $f_{n}$ is quadratic and $g_{n}$ is the $l_{1}$-norm
reduces to the popular LASSO problem [12]. In particular,  it also
useful to recover sparse signal.

\subsection{ Instantiating the ADMMDS$^{+}$}
We regard our stochastic minibatch algorithm as an instance of the
ADMMDS$^{+}$ coupled with a randomized coordinate descent. In order
to end that ,we rephrase Problem (5.1) as
$$\inf_{x\in\mathcal{X}^{N}}\sum_{n=1}^{N}(f_{n}(x)+g_{n}(x))+\iota_{\mathcal{C}}(x),\eqno{(5.2)}$$
where the notation $x_{n}$ represents the $n$-th component of any $x
\in \mathcal{X}^{N}$, $\mathcal{C}$ is the space of vectors $x \in
\mathcal{X}^{N}$ such that $x_{1} = \cdots = x_{N}$. On the space
$\mathcal{X}^{N}$, we set $f(x) = \sum_{ n} f_{n}(x_{n})$, $g(x) =
\sum_{ n} g_{n}(x_{n})$, $h(x) = \iota_{\mathcal{C}}$ and $D =
I_{\mathcal{X}^{N}}$ the identity matrix. Problem (5.2) is
equivalent to
$$\min_{x\in\mathcal{X}^{N}} f(x)+g(x)+ (h\circ D)(x).\eqno{(5.3)}$$
We define the natural scalar product on $\mathcal{X}^{N}$ as
$\langle x,y\rangle=\sum_{n=1}^{N}\langle x_{n},y_{n}\rangle$.
Applying the ADMMDS$^{+}$ to solve Problem (5.3) leads to the
following iterative scheme:
\begin{align*}
&z^{k+1}=proj_{\mathcal{C}}\|x^{k}+\mu_{k}y^{k}\|^{2}, \\
&y^{k+1}_{n}=y^{k}_{n}+\mu_{k}^{-1}(x^{k}_{n}-z^{k+1}_{n} ),\\
&u^{k+1}_{n}=(1-\tau_{k}\mu_{k}^{-1})x^{k}_{n}+\tau_{k}\mu_{k}^{-1}z^{k+1}_{n},\\
&x^{k+1}_{n}=\arg\min_{w\in\mathcal {X}}[g_{n}(w)+\langle\nabla
f_{n}(x^{k}),w\rangle+\frac{\|w-u^{k+1}_{n}-\tau_{k}y^{k+1}_{n}\|^{2}}{2\tau_{k}}],
\end{align*}
where $proj_{\mathcal{C}}$ is the orthogonal projection onto
$\mathcal{C}$. Observe that for any $x \in \mathcal{X}^{N}$,
$proj_{\mathcal{C}}(x)$ is equivalent to $(\bar{x}, \cdots ,
\bar{x})$ where $\bar{x}$ is the average of vector $x$, that is
$\bar{x}=N^{-1}\sum_{n}x_{n}$. Consequently, the components of
$z^{k+1}$ are equal and coincide with $\bar{x}^{k}+\mu_{k}
\bar{y}^{k}$ where $\bar{x}^{k}$ and $\bar{y}^{k}$ are the averages
of $x^{k}$  and $y^{k}$ respectively. By inspecting the $y^{k}$
$n$-update equation above, we notice that the latter equality
simplifies even further by noting that $\bar{y}^{k+1} = 0$ or,
equivalently, $\bar{y}^{k} = 0$ for all $k\geq 1$ if the algorithm
is started with $\bar{y}^{0} = 0$. Finally, for any $n$ and $k\geq
1$, the above iterations reduce to
\begin{align*}
&\bar{x}^{k}=\frac{1}{N}\sum_{n=1}^{N} x^{k}_{n},\\
&y^{k+1}_{n}=y^{k}_{n}+\mu_{k}^{-1}(x^{k}_{n}-\bar{x}^{k} ),\\
&u^{k+1}_{n}=(1-\tau_{k}\mu_{k}^{-1})x^{k}_{n}+\tau_{k}\mu_{k}^{-1}\bar{x}^{k},\\
&x^{k+1}_{n}=prox_{\tau_{k}g_{n}}[u^{k+1}_{n}-\tau_{k}(\nabla
f_{n}(x^{k}_{n})+y^{k+1}_{n})].
\end{align*}
These iterations can be written more compactly as
\begin{algorithm}[H]
\caption{Minibatch ADMMDS$^{+}$.}
\begin{algorithmic}\label{algorithm4}
\STATE Initialization: Choose $x^{0}\in \mathcal{X}$, $y^{0}\in
\mathcal{Y}$, s.t. $\sum_{n}v^{0}_{n}=0$.\\
Do
$$
\begin{array}{l}
\bullet ~~\bar{x}^{k}=\frac{1}{N}\sum_{n=1}^{N} x^{k}_{n},\\
\bullet ~~For~ batches~  n = 1, \cdots ,N,~  do\\
~~~~y^{k+1}_{n}=y^{k}_{n}+\mu_{k}^{-1}(x^{k}_{n}-\bar{x}^{k} ),\\
~~~~x^{k+1}_{n}=prox_{\tau_{k}g_{n}}[(1-2\tau_{k}\mu_{k}^{-1})x^{k}_{n}-\tau_{k}\nabla
f_{n}(x^{k}_{n})+2\tau_{k}\mu_{k}^{-1}\bar{x}^{k}-\tau_{k}y^{k}_{n}].\\
\bullet ~~Increment~ k.
\end{array}\eqno{(5.4)}
$$

\end{algorithmic}
\end{algorithm}
The following result is a straightforward consequence of Theorem
3.3.
\begin{thm}
Assume that the minimization Problem (5.3) is consistent,
$\liminf_{k\rightarrow\infty}\mu_{k}>0$, and
$\liminf_{k\rightarrow\infty}\tau_{k}>0$.  Let Assumption 5.1  hold
true and
$\frac{1}{\liminf_{k\rightarrow\infty}\tau_{k}}-\frac{1}{\liminf_{k\rightarrow\infty}\mu_{k}}>\frac{L}{2}$.
Let the sequences $(\bar{x}^{k},y^{k})$ be generated by Minibatch
ADMMDS$^{+}$. Then for any initial point $( x^{0}, y^{0})$ such that
$\bar{y}^{0} = 0$, the sequence $\{\bar{x}^{k}\}$ converges to a
solution of Problem (5.3).
\end{thm}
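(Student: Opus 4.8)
The plan is to present Problem~(5.3) as an instance of Problem~(1.1) for which all the hypotheses of Theorem~3.3 hold, then to check that Minibatch ADMMDS$^{+}$ is nothing but ADMMDS$^{+}$ applied to that instance once the auxiliary variables are eliminated, and finally to read off the limit from the conclusion of Theorem~3.3.

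First I would verify the hypotheses of Theorem~3.3 for the data $f(x)=\sum_{n}f_{n}(x_{n})$, $g(x)=\sum_{n}g_{n}(x_{n})$, $h=\iota_{\mathcal{C}}$, $D=I_{\mathcal{X}^{N}}$ on $\mathcal{X}^{N}$. Consistency of (5.3) and the positivity of $\liminf_{k}\mu_{k},\liminf_{k}\tau_{k}$ are Assumption~5.1(3) and the standing hypotheses. Since $D=I$ is injective, Assumption~3.1(1) is immediate and $f\circ D^{-1}=f$; moreover $\nabla f=(\nabla f_{1},\dots,\nabla f_{N})$ satisfies $\|\nabla f(x)-\nabla f(x')\|^{2}=\sum_{n}\|\nabla f_{n}(x_{n})-\nabla f_{n}(x'_{n})\|^{2}\le\beta^{-2}\|x-x'\|^{2}$ by Assumption~5.1(1), so Assumption~3.1(2) holds with $L=1/\beta$ and the requirement $\frac{1}{\liminf_{k}\tau_{k}}-\frac{1}{\liminf_{k}\mu_{k}}>L/2$ is exactly the hypothesis of the theorem. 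For the qualification condition of Assumption~2.1, note that $dom\, h=\mathcal{C}$ is a linear subspace, hence $ri(\mathcal{C})=\mathcal{C}$, and $0\in ri(dom\, h-D\, dom\, g)$ is equivalent to $\mathcal{C}\cap\prod_{n}ri(dom\, g_{n})\neq\emptyset$, i.e. to $\bigcap_{n}ri(dom\, g_{n})\neq\emptyset$, which is Assumption~5.1(4). Thus every assumption required by Theorem~3.3 is in force.

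Next I would carry out the elimination leading to (5.4). The key observations are that $proj_{\mathcal{C}}(v)=(\bar v,\dots,\bar v)$ with $\bar v=N^{-1}\sum_{n}v_{n}$, and that averaging the $y$-update of Algorithm~3 gives $\bar y^{k+1}=\bar y^{k}+\mu_{k}^{-1}(\bar x^{k}-(\bar x^{k}+\mu_{k}\bar y^{k}))=0$ for every $k$, so that the affine set $\{\bar y=0\}$ is invariant and, starting from $\bar y^{0}=0$, one has $\bar y^{k}=0$ for all $k$. Then $z^{k+1}_{n}=\bar x^{k}$, $u^{k+1}_{n}=(1-\tau_{k}\mu_{k}^{-1})x^{k}_{n}+\tau_{k}\mu_{k}^{-1}\bar x^{k}$, and substituting $u^{k+1}_{n}$ together with $y^{k+1}_{n}=y^{k}_{n}+\mu_{k}^{-1}(x^{k}_{n}-\bar x^{k})$ into the block-$x$ minimization of Algorithm~3 (which, as $D$ acts as the identity on each block, is a proximal step for $\tau_{k}g_{n}$) reproduces exactly the update in (5.4). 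Hence the sequences $(x^{k},y^{k})$ generated by Minibatch ADMMDS$^{+}$ with $\bar y^{0}=0$ coincide with those generated by ADMMDS$^{+}$ applied to (5.3).

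Finally, Theorem~3.3 gives that the full iterate $x^{k}=(x^{k}_{1},\dots,x^{k}_{N})$ converges to some $x^{\star}$ solving (5.3). Because $h=\iota_{\mathcal{C}}$ forces $x^{\star}\in\mathcal{C}$, we may write $x^{\star}=(\bar x^{\star},\dots,\bar x^{\star})$, and $x^{k}\to x^{\star}$ yields $\bar x^{k}=N^{-1}\sum_{n}x^{k}_{n}\to\bar x^{\star}$; identifying the solution of (5.3) with its common component, $\bar x^{\star}$ is a minimizer of (5.1), which is the claim. I expect the only delicate step to be the middle paragraph: one must check that eliminating $z^{k},u^{k}$ and restricting to the invariant set $\{\bar y=0\}$ is done faithfully, so that no hypothesis of Theorem~3.3 is silently lost in passing to the reduced recursion, and that the convergence statement of Theorem~3.3 is about the full vector $x^{k}$ rather than only about its average.
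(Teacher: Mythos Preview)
Your proposal is correct and follows exactly the route the paper intends: the paper's own ``proof'' of this theorem is the single sentence that it is a straightforward consequence of Theorem~3.3, and you have faithfully supplied the omitted verification---checking Assumptions~2.1 and~3.1 for the data $(f,g,h,D)=(\sum_n f_n,\sum_n g_n,\iota_{\mathcal C},I)$, rederiving the elimination leading to~(5.4) via the invariance $\bar y^{k}=0$, and reading off the limit from Theorem~3.3. Your argument is in fact more careful than the surrounding text (for instance, your treatment of the qualification condition via $\mathcal C\cap\prod_n ri(dom\,g_n)\neq\emptyset$ and your observation that the limit lies in $\mathcal C$ so that $\bar x^{k}$ converges to its common component).
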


At each step $k$, the iterations given above involve the whole set
of functions $f_{n}, g_{n} (n = 1, \cdots,N)$. Our aim is now to
propose an algorithm which involves a single couple of functions
$(f_{n}, g_{n})$ per iteration.
\subsection{ A stochastic minibatch primal-dual splitting  algorithm with dynamic
stepsize} We are now in position to state the main algorithm of this
section. The proposed stochastic minibatch primal-dual splitting
algorithm with dynamic stepsize(SMPDSDS) is obtained upon applying
the randomized coordinate descent on the minibatch ADMMDS$^{+}$:

\begin{algorithm}[H]
\caption{SMPDSDS.}
\begin{algorithmic}\label{algorithm5}
\STATE Initialization: Choose $x^{0}\in \mathcal{X}$, $y^{0}\in
\mathcal{Y}$.\\
Do
$$
\begin{array}{l}
\bullet ~~Define ~ \bar{x}^{k}=\frac{1}{N}\sum_{n=1}^{N} x^{k}_{n}, ~ \bar{y}^{k}=\frac{1}{N}\sum_{n=1}^{N}v^{k}_{n},\\
\bullet ~~Pick ~up ~the ~value ~of  ~\zeta^{k+1}, \\
\bullet ~~For~ batch~n=\zeta^{k+1},~set\\
~~~~y^{k+1}_{n}=y^{k}_{n}-\bar{y}^{k}+\frac{(x^{k}_{n}-\bar{x}^{k} )}{\mu_{k}},~~~~~~~~~~~~~~~~~~~~~~~~~~~~~~~~~~~~~~~~~~~~~~~~~~~~~~~~~~~~(5.5a)\\
~~~~x^{k+1}_{n}=prox_{\tau_{k}g_{n}}[(1-2\tau_{k}\mu_{k}^{-1})x^{k}_{n}-\tau_{k}\nabla
f_{n}(x^{k}_{n})-\tau_{k}y^{k}_{n}+2\tau_{k}(\mu_{k}^{-1}\bar{x}^{k}+\bar{y}^{k})].(5.5b)\\
\bullet ~~For~ all ~batches ~n\neq\zeta^{k+1},~~ y^{k+1}_{n}=y^{k}_{n}, x^{k+1}_{n}=x^{k}_{n}.\\
 \bullet ~~Increment~ k.
\end{array}
$$

\end{algorithmic}
\end{algorithm}

\begin{ass}
The random sequence $(\zeta^{k})_{k\in\mathbb{N}^{\ast}}$ is i.i.d.
and satisfies $\mathbb{P}[\zeta^{1} = n] > 0$ for all $n = 1,
...,N$.
\end{ass}

\begin{thm}
Assume that the minimization Problem (5.3) is consistent,
$\liminf_{k\rightarrow\infty}\mu_{k}>0$, and
$\liminf_{k\rightarrow\infty}\tau_{k}>0$.  Let Assumption 5.1 and
Assumption 5.2 hold true and
$\frac{1}{\liminf_{k\rightarrow\infty}\tau_{k}}-\frac{1}{\liminf_{k\rightarrow\infty}\mu_{k}}>\frac{L}{2}$.
 Then for any initial point $( x^{0}, y^{0})$ , the sequence $\{\bar{x}^{k}\}$  generated by SMPDSDS algorithm converges to a
solution of Problem (5.3).
\end{thm}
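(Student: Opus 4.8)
The plan is to recognize SMPDSDS as precisely the randomized coordinate descent iteration (4.3) of Theorem 4.1 applied to the sequence of averaged operators that generate the Minibatch ADMMDS$^{+}$ of Algorithm 4, and then to invoke Theorem 4.1 together with Theorem 5.2 (equivalently Theorem 3.3) to translate the conclusion back to Problem (5.3). First I would fix the product decomposition of the ambient space. By the derivation behind Theorem 3.3, the Minibatch ADMMDS$^{+}$ step $w^{k}=(x^{k},y^{k})\mapsto w^{k+1}=(x^{k+1},y^{k+1})$ has the form $w^{k+1}=T^{k}w^{k}$, where $w^{k}$ lives in the Euclidean space $\mathcal{Z}=\mathcal{Z}_{1}\times\cdots\times\mathcal{Z}_{N}$ with $\mathcal{Z}_{n}=\mathcal{X}\times\mathcal{X}$ the block carrying $(x_{n},y_{n})$, equipped with the metric $P_{k}$ of Section 3.2; moreover $T^{k}$ is $\eta_{k}$-averaged in that metric, $T^{k}\to T$, $T$ is $\eta$-averaged, and $\bigcap_{k}\mathrm{Fix}(T^{k})=\mathrm{Fix}(T)\neq\emptyset$, with the fixed points of $T$ in one-to-one correspondence with the primal-dual solutions of Problem (5.3) (this is exactly what the proofs of Theorems 3.2 and 3.3 supply). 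Since $\mu_{k},\tau_{k}$ are convergent (along subsequences) one has $P_{k}\to P$, so all the $P_{k}$-norms are uniformly equivalent to a fixed norm and $T^{k}\to T$ in operator norm, which is the setting in which Theorem 4.1 is meant to be applied.

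Second, I would verify the algebraic identity that SMPDSDS is the coordinate restriction of this $T^{k}$. The point is to return to the \emph{unsimplified} minibatch recursion, i.e.\ before the reduction that exploits $\bar{y}^{0}=0$: there one has $z^{k+1}_{n}=\bar{x}^{k}+\mu_{k}\bar{y}^{k}$ for every $n$, hence $y^{k+1}_{n}=y^{k}_{n}-\bar{y}^{k}+\mu_{k}^{-1}(x^{k}_{n}-\bar{x}^{k})$, and substituting into the $u$- and $x$-updates gives $x^{k+1}_{n}=prox_{\tau_{k}g_{n}}[(1-2\tau_{k}\mu_{k}^{-1})x^{k}_{n}-\tau_{k}\nabla f_{n}(x^{k}_{n})-\tau_{k}y^{k}_{n}+2\tau_{k}(\mu_{k}^{-1}\bar{x}^{k}+\bar{y}^{k})]$. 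These are exactly the block-$n$ components $T^{k}_{n}w^{k}$ appearing in (5.5a)--(5.5b); note in particular that the correction terms $-\bar{y}^{k}$ and $+2\tau_{k}\bar{y}^{k}$ are precisely what keeps the identity valid once only a single block of $y$ has been touched in earlier steps, so that $\bar{y}^{k}$ need not vanish. Consequently SMPDSDS is the iteration $w^{k+1}=\hat{T}^{k,(\zeta^{k+1})}w^{k}=w^{k}+\beta_{k}(\hat{T}^{k,(\zeta^{k+1})}w^{k}-w^{k})$ of Section 4 with relaxation $\beta_{k}\equiv1$ and with $\zeta^{k+1}$ selecting the singleton block $\{\zeta^{k+1}\}$; Assumption 5.2 says the $\zeta^{k}$ are i.i.d.\ and each singleton has positive probability, which is exactly Condition (4.1).

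Third, I would check the stepsize hypothesis of Theorem 4.1 and conclude. The strict inequality $\tfrac{1}{\liminf\tau_{k}}-\tfrac{1}{\liminf\mu_{k}}>\tfrac{L}{2}$ forces, via the computation $\delta_{k}=2-\tfrac{1}{2\pi_{k}}\in(1,2]$ from the proof of Theorem 3.2 transported through Theorem 3.3, that $\eta_{k}=1/\delta_{k}<1$ for all $k$ and $\limsup_{k}\eta_{k}<1$; hence $\beta_{k}\equiv1$ satisfies $0<\beta_{k}<1/\eta_{k}$ and $0<\liminf_{k}\beta_{k}\le\limsup_{k}\beta_{k}=1<1/\limsup_{k}\eta_{k}$. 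Theorem 4.1 then yields that, almost surely, $w^{k}\to w^{\star}=(x^{\star},y^{\star})\in\mathrm{Fix}(T)$. Writing $x^{\star}=(x^{\star}_{1},\dots,x^{\star}_{N})$, the consensus term $h=\iota_{\mathcal{C}}$ forces $x^{\star}_{1}=\cdots=x^{\star}_{N}$, and by the correspondence recalled above this common value solves Problem (5.1)/(5.3); since $\bar{x}^{k}=\tfrac{1}{N}\sum_{n}x^{k}_{n}\to\tfrac{1}{N}\sum_{n}x^{\star}_{n}$ equals that common value, $\{\bar{x}^{k}\}$ converges almost surely to a solution of Problem (5.3).

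I expect the second step to be the main obstacle: one must be careful that SMPDSDS, as written with the running averages $\bar{x}^{k},\bar{y}^{k}$, genuinely is the coordinate restriction of a single well-defined operator $T^{k}$ on the full product space, and not merely a formal analogue — the drift of $\bar{y}^{k}$ away from $0$ under one-block-at-a-time updating is exactly why the $\bar{y}^{k}$ terms in (5.5) are present, and the identification only becomes rigorous by redoing the algebra from the pre-simplification recursion. A secondary point requiring care is that the averagedness metric $P_{k}$ varies with $k$, whereas Theorem 4.1 is phrased for a fixed Euclidean structure; one should note $P_{k}\to P$ and, if needed, re-examine the supermartingale estimate (4.4) in the fixed $P$-metric, absorbing the vanishing discrepancy coming from $P_{k}-P$, just as subsequential limits $\tau_{k}\to\tau$, $\mu_{k}\to\mu$ were used in the proof of Theorem 3.2.
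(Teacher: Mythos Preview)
Your proposal is correct and follows essentially the same route as the paper: identify the Minibatch ADMMDS$^{+}$ update as $(y^{k+1},x^{k+1})=T^{k}(y^{k},x^{k})$ with $T^{k}$ an $\eta_{k}$-averaged operator (via Theorem~3.2/3.3), verify that the SMPDSDS updates (5.5a)--(5.5b) are exactly the block-$n$ components of $T^{k}$ by redoing the algebra from (3.24) without the simplification $\bar{y}^{k}=0$ (in particular $h^{\ast}=\iota_{\mathcal{C}^{\perp}}$ yields $\varsigma^{k+1}_{n}=y^{k}_{n}-\bar{y}^{k}+\mu_{k}^{-1}(x^{k}_{n}-\bar{x}^{k})$), and then invoke Theorem~4.1 with $\beta_{k}\equiv 1$. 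Two small remarks: your reference to ``Theorem~5.2 (equivalently Theorem~3.3)'' is a self-reference and should read Theorem~5.1/Theorem~3.3; and your explicit check that $\beta_{k}\equiv 1<1/\limsup_{k}\eta_{k}$ and your flag about the $k$-dependent metric $P_{k}$ are points the paper's own proof simply passes over, so you are in fact being more careful there.
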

\begin{proof}
Let us define $(\bar{f}, \bar{g}, h, D)=(f, g, h, I_{x^{N}})$ where
the functions $f$, $g$, and $h$ are the ones defined in section 5.2.
Then the iterates $((y^{k+1}_{ n} )^{N} _{n=1}, (x^{k+1}_{ n} )^{N}
_{n=1})$ described by Equations (5.4) coincide with the iterates $(
y^{k+1}, x^{k+1})$ described by Equations (3.24). If we write these
equations more compactly as $(y^{k+1}, x^{k+1}) = T^{k}(y^{k},
x^{k})$ where  the operator $T^{k}$ acts in the space
$\mathcal{Z}=\mathcal{X}^{N}\times\mathcal{X}^{N}$, then from the
proof of Theorem 3.2, we konw that $T^{k}$ is $\eta_{k}$-averaged,
where $\eta_{k}=(2-\bar{\eta}_{k})^{-1}$ and
$\bar{\eta}_{k}=\frac{L}{2}(\tau_{k}^{-1}-\mu_{k}^{-1})$ .
 Defining the selection operator $\mathcal{S}_{n }$ on
$\mathcal{Z}$ as $\mathcal{S}_{n }(y, x) = (y_{n}, x_{n})$, we
obtain that $\mathcal{Z} = \mathcal{S}_{1 }(\mathcal{Z})\times
\cdots\times \mathcal{S}_{N }(\mathcal{Z})$ up to an element
reordering. To be compatible with the notations of Section 4.1, we
assume that $J = N$ and that the random sequence $\zeta^{k}$ driving
the SMPDSDS algorithm is set valued in
$\{\{1\},\ldots\{N\}\}\subset2^{\mathcal{J}}$. In order to establish
Theorem 5.2, we need to show that the iterates $(y^{k+1}, x^{k+1})$
provided by the SMPDSDS algorithm are those who satisfy the equation
$(y^{k+1}, x^{k+1}) = T^{k,(\zeta^{k+1})}(y^{k}, x^{k})$.
 By the direct application of Theorem 4.1, we can obtain Theorem
 5.2.\\
Let us start with the $y$-update equation. Since $h = \iota_{C}$,
its Legendre-Fenchel transform is $h^{\ast} = \iota_{C^{\perp}}$
where $C^{\perp}$ is the orthogonal complement of $C$ in $\mathcal
{X}^{N}$. Consequently,

If we write $(\varsigma^{k+1}, \upsilon^{k+1}) = T^{k}(y^{k},
x^{k})$, then by Eq. (3.24a),

$$\varsigma^{k+1}_{n}=y^{k}_{n}-\bar{y}^{k}+\frac{(x^{k}_{n}-\bar{x}^{k} )}{\mu_{k}}~~n=1,\ldots N.$$
Observe that in general, $\bar{y}^{k}\neq 0$ because in the SMPDSDS
algorithm, only one component is updated at a time. If $\{n\} =
\zeta^{k+1}$, then $y^{k+1}_{n} = \varsigma^{k+1}_{n}$ which is Eq.
(5.5a). All other components of $y^{k}$ are carried over to $
y^{k+1}$ .\\
 By Equation (3.24b) we also get
$$\upsilon^{k+1}_{n}=prox_{\tau_{k}g_{n}}[x^{k}_{n}-\tau_{k}\nabla
f_{n}(x^{k}_{n})-\tau_{k}(2y^{k+1}_{n}-y^{k})].$$ If $\{n\} =
\zeta^{k+1}$, then $x^{k+1}_{n}=\upsilon^{k+1}_{n}$ can easily be
shown to be given by (5.5b).

\end{proof}

\section{Distributed optimization}

~~~~Consider a set of $N > 1$ computing agents that cooperate to
solve the minimization Problem (5.1). Here, $f_{n}$, $g_{n}$ are two
private functions available at Agent $n$. Our purpose is to
introduce a random distributed algorithm to to solve (5.1). The
algorithm is asynchronous in the sense that some components of the
network are allowed to wake up at random and perform local updates,
while the rest of the network stands still. No coordinator or global
clock is needed. The frequency of activation of the various network
components is likely to vary.

The examples of this problem appear in learning applications where
massive training data sets are distributed over a network and
processed by distinct machines [13], [14], in resource allocation
problems for communication networks [15], or in statistical
estimation problems by sensor networks [16], [17].
\subsection{Network model and problem formulation}
We consider the network as a graph $G = (Q,E)$ where $Q = \{1,
\cdots ,N\}$ is the set of agents/nodes and $E\subset \{1, \cdots
,N\}^{2}$ is the set of undirected edges. We write $n\sim m$
whenever ${n,m}\in E$. Practically, $n\sim m $ means that agents $n$
and $m$ can communicate with each other.

\begin{ass}
$G$ is connected and has no self loop.
\end{ass}

Now we introduce some notations. For any $x\in\mathcal{X}^{|Q|}$, we
denote by $x_{n}$ the components of $x$, i.e., $x = (x_{n})_{n\in
Q}$. We redard the functions $f$ and $g$ on
$\mathcal{X}^{|Q|}\rightarrow(-\infty,+\infty]$ as $f(x)=\sum_{n\in
Q}f_{n}(x_{n})$ and $g(x)=\sum_{n\in Q}g_{n}(x_{n})$. So the Problem
(5.1) is equal to the minimization of $f(x)+g(x)$ under the
constraint that all components of $x$ are equal.

Next we write the latter constraint in a way that involves the graph
$G$. We replace the global consensus constraint by a modified
version of the function $\iota_{\mathcal{C}}$ . The purpose of us is
to ensure global consensus through local consensus over every edge
of the graph.

For any $\epsilon\in E$, say $\epsilon= \{n,m\}\in Q$ , we define
the linear operator $D_{\epsilon}(x) : \mathcal{X}^{|Q|} \rightarrow
\mathcal{X}^{2}$ as $D_{\epsilon}(x) = (x_{n}, x_{m})$ where we
assume some ordering on the nodes to avoid any ambiguity on the
definition of $D$. We construct the linear operator
$D:\mathcal{X}^{|Q|} \rightarrow\mathcal{Y}\triangleq
\mathcal{X}^{2|Q|}$ as $D(x)=(D_{\epsilon}(x))_{\epsilon\in E}$
where we also assume some ordering on the edges. Any vector $y \in
\mathcal{Y}$ will be written as $y = (y_{\epsilon})_{\epsilon\in E}$
where, writing $\epsilon= \{n,m\} \in E$, the component
$y_{\epsilon}$ will be represented by the couple $y_{\epsilon}=
(y_{\epsilon}(n), y_{\epsilon}(m))$ with $n < m$. We also introduce
the subspace of
 $\mathcal{X}^{2}$ defined as $\mathcal{C}_{2} = \{(x, x) : x \in \mathcal{X}\}$. Finally, we define $h : \mathcal{Y} \rightarrow (
-\infty,+\infty]$ as

$$h(y)=\sum_{\epsilon\in E}\iota_{\mathcal{C}_{2}}(y_{\epsilon}).\eqno{(6.1)}$$
Then we consider the following problem:
$$\min_{x\in\mathcal{X}^{|Q|}} f(x)+g(x)+ (h\circ D)(x).\eqno{(6.2)}$$

\begin{lem}
([2]). Let Assumptions 6.1 hold true.  The minimizers of (6.2) are
the tuples $(x^{\ast}, \cdots , x^{\ast})$ where $x^{\ast}$ is any
minimizer of (5.1).
\end{lem}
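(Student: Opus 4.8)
The plan is to show that the composite term $(h\circ D)(x)$ is nothing but the indicator function $\iota_{\mathcal{C}}$ of the global consensus subspace $\mathcal{C}=\{x\in\mathcal{X}^{|Q|}:x_{1}=\cdots=x_{N}\}$, after which Problem (6.2) collapses onto Problem (5.1). First I would unfold the definitions: by (6.1) and the construction of $D=(D_{\epsilon})_{\epsilon\in E}$, for any $x=(x_{n})_{n\in Q}$ one has $(h\circ D)(x)=\sum_{\epsilon\in E}\iota_{\mathcal{C}_{2}}(D_{\epsilon}(x))=\sum_{\{n,m\}\in E}\iota_{\mathcal{C}_{2}}(x_{n},x_{m})$. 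Since $\iota_{\mathcal{C}_{2}}(x_{n},x_{m})=0$ when $x_{n}=x_{m}$ and $+\infty$ otherwise, this sum equals $0$ when $x_{n}=x_{m}$ for every edge $\{n,m\}\in E$ and $+\infty$ as soon as a single edge constraint is violated; that is, $(h\circ D)(x)=\iota_{\mathcal{C}_{E}}(x)$ with $\mathcal{C}_{E}=\{x:x_{n}=x_{m}\ \forall\,\{n,m\}\in E\}$.

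The only genuinely substantive point is the identity $\mathcal{C}_{E}=\mathcal{C}$. The inclusion $\mathcal{C}\subset\mathcal{C}_{E}$ is immediate. For the reverse inclusion I would invoke Assumption 6.1: since $G$ is connected, any two nodes $n,m\in Q$ are joined by a path $n=n_{0}\sim n_{1}\sim\cdots\sim n_{\ell}=m$, and if $x\in\mathcal{C}_{E}$ then $x_{n_{0}}=x_{n_{1}}=\cdots=x_{n_{\ell}}$, whence $x_{n}=x_{m}$; as $n,m$ were arbitrary, all components of $x$ coincide, so $x\in\mathcal{C}$. Consequently $(h\circ D)(x)=\iota_{\mathcal{C}}(x)$ and Problem (6.2) reads $\min_{x\in\mathcal{C}}\{f(x)+g(x)\}$, every feasible (finite-value) point lying in $\mathcal{C}$.

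Finally I would identify this restricted problem with (5.1). Each $x\in\mathcal{C}$ has the form $x=(x^{\ast},\ldots,x^{\ast})$ for a unique $x^{\ast}\in\mathcal{X}$, and then $f(x)+g(x)=\sum_{n\in Q}\big(f_{n}(x^{\ast})+g_{n}(x^{\ast})\big)$, which is exactly the objective of (5.1). Hence the bijection $x^{\ast}\mapsto(x^{\ast},\ldots,x^{\ast})$ between $\mathcal{X}$ and $\mathcal{C}$ preserves objective values, so it carries the set of minimizers of (5.1) exactly onto the set of minimizers of (6.2), which is the assertion; the infimum in (6.2) is attained because that of (5.1) is, by Assumption 5.1. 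The only matters requiring care are the bookkeeping with $+\infty$ values in the indicator manipulations and ensuring the path argument genuinely uses connectedness (the ``no self-loop'' part of Assumption 6.1 is not needed here), and I do not anticipate any real obstacle beyond that.
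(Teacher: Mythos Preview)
Your argument is correct and is the standard one: unfold $(h\circ D)$ into a sum of edge-wise indicators, use connectedness of $G$ to identify the edge-consensus set $\mathcal{C}_{E}$ with the global consensus set $\mathcal{C}$, and then observe that the bijection $x^{\ast}\mapsto(x^{\ast},\ldots,x^{\ast})$ preserves objective values. Note, however, that the paper does not supply its own proof of this lemma; it simply quotes the result from reference~[2], so there is no in-paper argument to compare against. Your proof is precisely what one expects the cited proof to contain, and your side remarks (that the ``no self-loop'' clause is unused here, and that attainment in (6.2) follows from Assumption~5.1) are accurate.
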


\subsection{Instantiating the ADMMDS$^{+}$}

Now we use the ADMMDS$^{+}$ to solve the Problem (6.2). Since the
newly defined function $h$ is separable with respect to the
$(y_{\epsilon})_{\epsilon\in E}$, we get

$$prox_{\tau_{k} h}(y)=(prox_{\tau_{k} \iota_{\mathcal{C}_{2}}}(y_{\epsilon}))_{\epsilon\in E}=((\bar{y}_{\epsilon}, \bar{y}_{\epsilon}))_{\epsilon\in E}$$

where $\bar{y}_{\epsilon}=(y_{\epsilon}(n)+y_{\epsilon}(m))/2$ if
$\epsilon=\{n,m\}$. With this at hand, the update equation (a) of
the ADMMDS$^{+}$ can be  written as
$$z^{k+1}=((\bar{z}_{\epsilon}^{k+1},\bar{z}_{\epsilon}^{k+1}))_{\epsilon\in E},$$
where
$$\bar{z}^{k+1}=\frac{x^{k}_{n}+x^{k}_{m}}{2}+\frac{\mu_{k}(y_{\epsilon}^{k}(n)+y_{\epsilon}^{k}(m))}{2}$$
 for any $\epsilon= \{n,m\}\in E$.
  Plugging this equality into Eq. (b) of the
ADMMDS$^{+}$, it can be seen that
$y_{\epsilon}^{k}(n)=-y_{\epsilon}^{k}(m)$. Therefore
$$\bar{z}^{k+1}=\frac{x^{k}_{n}+x^{k}_{m}}{2},$$
for any $k \geq 1$. Moreover
$$y_{\epsilon}^{k+1}=\frac{x^{k}_{n}-x^{k}_{m}}{2\mu_{k}}+y_{\epsilon}^{k}(n).$$
Observe that the $n$-th component of the vector $D^{T}Dx$ coincides
with $ d_{n}x_{n}$, where $ d_{n}$ is the degree (i.e., the number
of neighbors) of node $n$. From (d) of the ADMMDS$^{+}$, the
$n^{th}$ component of  $  x^{k+1}$ can be written
$$x^{k+1}_{n}=prox_{\tau_{k}g_{n}/d_{n}}[\frac{(D^{\ast}(u^{k+1}-\tau_{k}y^{k+1}))_{n}-\tau_{k}\nabla
f_{n}(x^{k}_{n})}{d_{n}}],$$ where for any $y\in\mathcal{Y}$,
$$(D^{T}y)_{n}=\sum_{m:\{n,m\}\in E}y_{\{n,m\}}(n)$$
is the $n$-th component of $D^{T}y \in \mathcal{X}^{|Q |}$. Plugging
Eq.  (c) of the ADMMDS$^{+}$ together with the expressions of
$\bar{z}^{k+1}_{\{n,m\}}$ and $y^{k+1}_{\{n,m\}}$ in the argument of
$prox_{\tau_{k}g_{n}/d_{n}}$ , we can have
\begin{align*}
x^{k+1}_{n}&=prox_{\tau_{k}g_{n}/d_{n}}[(1-\tau_{k}\mu_{k}^{-1})x^{k}_{n}-\frac{\tau_{k}}{d_{n}}\nabla
f_{n}(x^{k}_{n})+\frac{\tau_{k}}{d_{n}}\sum_{m:\{n,m\}\in
E}(\mu_{k}^{-1}x^{k}_{m}-y^{k}_{\{n,m\}}(n))].
\end{align*}
The algorithm is finally described by the following procedure: Prior
to the clock tick $k + 1$, the node $n$ has in its memory the
variables $x^{k}_{n}$, $\{y_{\{n,m\}}^{k}(n)\}_{m\thicksim n}$,
 and  $\{x^{k}_{m}\}_{m\thicksim n}$.
\begin{algorithm}[H]
\caption{Distributed ADMMDS$^{+}$.}
\begin{algorithmic}\label{1}
\STATE Initialization: Choose $x^{0}\in \mathcal{X}$, $y^{0}\in
\mathcal{Y}$, s.t. $\sum_{n}y^{0}_{n}=0$.\\
Do
$$
\begin{array}{l}
\bullet ~~For~ any~ n \in Q ,~ Agent ~n ~performs ~the~ following~
operations: \\
~~~~y_{\{n,m\}}^{k+1}(n)=y_{\{n,m\}}^{k}(n)+\frac{x^{k}_{n}-x^{k}_{m}}{2},~~for~all~m\thicksim n, ~~~~~~~~~~~~~~~~~~~~~~~~~~~~~~~~~~(6.3a)\\
~~~~~~~~~~x^{k+1}_{n}=prox_{\tau_{k}g_{n}/d_{n}}[(1-\tau_{k}\mu_{k}^{-1})x^{k}_{n}-\frac{\tau_{k}}{d_{n}}\nabla
f_{n}(x^{k}_{n})\\
~~~~~~~~~~~~~~~~~~~+\frac{\tau_{k}}{d_{n}}\sum_{m:\{n,m\}\in
E}(\mu_{k}^{-1}x^{k}_{m}-y^{k}_{\{n,m\}}(n))].~~~~~~~~~~~~~~~~~~~~~~~~~~~~~~~~(6.3b)\\
\bullet ~~Agent~ n ~sends~ the~ parameter~ y^{k+1}_{n}, x^{k+1}_{n}
 ~to~ their~ neighbors~ respectively.\\
 \bullet ~~Increment~ k.
\end{array}
$$

\end{algorithmic}
\end{algorithm}

\begin{thm}
 Assume that the minimization Problem (5.1) is consistent,
$\liminf_{k\rightarrow\infty}\mu_{k}>0$, and
$\liminf_{k\rightarrow\infty}\tau_{k}>0$.  Let Assumption 5.1 and
Assumption 6.1 hold true and
$\frac{1}{\liminf_{k\rightarrow\infty}\tau_{k}}-\frac{1}{\liminf_{k\rightarrow\infty}\mu_{k}}>\frac{L}{2}$.
Let $(x^{k})_{k\in\mathbb{N}} $ be the sequence generated by
Distributed ADMMDS$^{+}$ for any initial point $( x^{0}, y^{0})$.
Then for all $n \in Q$ the sequence $(x^{k}_{n})_{k\in \mathbb{N}}$
converges to a solution of Problem (5.1).
\end{thm}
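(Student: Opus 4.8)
The plan is to derive the theorem exactly as Theorem 5.1 was obtained: as a consequence of Theorem 3.3 combined with Lemma 6.1, after identifying Distributed ADMMDS$^{+}$ with a concrete instance of ADMMDS$^{+}$. \textbf{Step 1 (the algorithm is an instance of ADMMDS$^{+}$).} I would first observe that Algorithm 6 is precisely Algorithm 3 run on Problem (6.2), i.e.\ on the instance of Problem (1.1) with $f(x)=\sum_{n\in Q}f_{n}(x_{n})$, $g(x)=\sum_{n\in Q}g_{n}(x_{n})$, $h$ given by (6.1), and $D=(D_{\epsilon})_{\epsilon\in E}$. This is exactly the reduction carried out in Section 6.2: using the separability of $h$ to evaluate $prox_{\tau_{k}h}$ edge by edge, eliminating $z^{k+1}$ and $u^{k+1}$, using the antisymmetry $y_{\epsilon}^{k}(n)=-y_{\epsilon}^{k}(m)$ valid for $k\ge 1$ (which is where the initialization $\sum_{n}y^{0}_{n}=0$ enters, through step (b)), and using $(D^{\ast}Dx)_{n}=d_{n}x_{n}$, one checks that the local updates (6.3a)--(6.3b) coincide with iteration (3.24a)--(3.24b) for this data. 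Hence the sequence $(x^{k},y^{k})$ produced by Algorithm 6 is that of ADMMDS$^{+}$ applied to (6.2).

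\textbf{Step 2 (checking the hypotheses of Theorem 3.3).} Next I would verify Assumption 2.1 and Assumption 3.1 for this data. For Assumption 2.1: the infimum in (6.2) is attained because (5.1) is consistent and by Lemma 6.1; the qualification condition $0\in ri(dom\,h-D\,dom\,g)$ follows from $\bigcap_{n\in Q}ri\,dom\,g_{n}\neq\emptyset$ (Assumption 5.1(4)) by a standard relative-interior argument, since for $\bar x$ in that intersection the consensus vector $(\bar x,\dots,\bar x)$ lies in $ri\,dom\,g$ and its image under $D$ lies in $dom\,h$, which is a subspace. For Assumption 3.1: $D$ is injective because $G$ is connected with $N>1$ (Assumption 6.1), so every node belongs to at least one edge and $D(x)=0$ forces $x=0$; and $\nabla(f\circ D^{-1})$ is Lipschitz on $\mathcal{R}=\mathrm{Im}(D)$ because $D^{-1}:\mathcal{R}\to\mathcal{X}^{|Q|}$ is bounded linear and each $\nabla f_{n}$ is $\beta^{-1}$-Lipschitz (Assumption 5.1(1)), with constant $L=\beta^{-1}\|D^{-1}\|^{2}$. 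With this $L$, the inequality $\frac{1}{\liminf_{k\rightarrow\infty}\tau_{k}}-\frac{1}{\liminf_{k\rightarrow\infty}\mu_{k}}>\frac{L}{2}$ is exactly the hypothesis of Theorem 3.3.

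\textbf{Step 3 (conclusion).} Theorem 3.3 then gives that $(x^{k})$ converges in $\mathcal{X}^{|Q|}$ to some solution $x^{\star}$ of Problem (6.2). By Lemma 6.1, $x^{\star}=(x^{*},\dots,x^{*})$ with $x^{*}$ a minimizer of (5.1). Since convergence in the product space $\mathcal{X}^{|Q|}$ (with the scalar product $\langle x,y\rangle=\sum_{n\in Q}\langle x_{n},y_{n}\rangle$) is equivalent to convergence of each coordinate, $x^{k}_{n}\to x^{*}$ for every $n\in Q$, which is the assertion.

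The main obstacle I anticipate is Step 1: carefully confirming the bookkeeping that reduces the abstract iteration (3.24) to the purely local form (6.3) --- in particular justifying the antisymmetry of the dual variables across each edge and the emergence of the degree-scaled proximity operators $prox_{\tau_{k}g_{n}/d_{n}}$ from $D^{\ast}Dx=(d_{n}x_{n})_{n}$. Once Algorithm 6 has been identified with ADMMDS$^{+}$ on (6.2), Steps 2 and 3 are routine verifications together with a direct appeal to Theorem 3.3 and Lemma 6.1.
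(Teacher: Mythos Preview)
Your proposal is correct and follows essentially the same approach as the paper: the paper does not give a separate proof of this theorem, but the derivation in Section 6.2 establishes that Distributed ADMMDS$^{+}$ is precisely ADMMDS$^{+}$ applied to Problem (6.2), so that the result follows directly from Theorem 3.3 together with Lemma 6.1. Your Step 2, where you explicitly verify Assumptions 2.1 and 3.1 for this instance (injectivity of $D$ from connectivity of $G$, Lipschitz continuity of $\nabla(f\circ D^{-1})$ with $L=\beta^{-1}\|D^{-1}\|^{2}$, and the qualification condition from Assumption 5.1(4)), fills in details the paper leaves implicit but does not change the route.
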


\subsection{A Distributed asynchronous primal-dual splitting  algorithm with dynamic
stepsize} In this section, we use the randomized coordinate descent
on the above algorithm, we call this algorithm as distributed
asynchronous primal-dual splitting  algorithm with dynamic stepsize
(DASPDSDS). This algorithm has the following attractive property:\\
Firstly, at each iteration, a single agent, or possibly a subset of
agents chosen at random, are activated. Moreover, in the algorithm
the coefficient $\tau$, $\sigma$ is made iteration-dependent to
solve the general Problem (5.1), errors are allowed in the
evaluation of the operators $prox_{\sigma h^{\ast}}$, $prox_{\tau
g_{n}}$ and $\nabla f_{n}$. The errors allow for some tolerance in
the numerical implementation of the algorithm, while the flexibility
introduced by the iteration-dependent parameters $\tau_{k}$ and
$\sigma_{k}$  can be used to improve its convergence pattern.
Finally, if we let $(\zeta^{k})_{k\in \mathbb{N}}$ be a sequence of
i.i.d. random variables valued in $2^{ Q}$. The value taken by
$\zeta^{k}$ represents the agents that will be activated and perform
a prox on their $x$ variable at moment $k$. The asynchronous
algorithm goes as follows:
\begin{algorithm}[H]
\caption{DASPDSDS.}
\begin{algorithmic}\label{1}
\STATE Initialization: Choose $x^{0}\in \mathcal{X}$, $y^{0}\in
\mathcal{Y}$.\\
Do
$$
\begin{array}{l}
\bullet ~~Select~ a~ random~ set ~of~ agents~ \zeta^{k+1} =\mathcal{B}. \\
\bullet ~~For~ any~ n \in \mathcal{B},~ Agent~ n~ performs~ the
~following~
operations:\\
~~~~-For ~all~ m \thicksim n, do\\
~~~~~~~~y_{\{n,m\}}^{k+1}(n)=\frac{y_{\{n,m\}}^{k}(n)-y_{\{n,m\}}^{k}(m)}{2}+\frac{x^{k}_{n}-x^{k}_{m}}{2},\\
~~~~-x^{k+1}_{n}=prox_{\tau_{k}g_{n}/d_{n}}[(1-\tau_{k}\mu_{k}^{-1})x^{k}_{n}-\frac{\tau_{k}}{d_{n}}\nabla
f_{n}(x^{k}_{n})\\
~~~~~~~~~~~~~~~~~+\frac{\tau_{k}}{d_{n}}\sum_{m:\{n\thicksim m\}\in
E}(\mu_{k}^{-1}x^{k}_{m}+y^{k}_{\{n,m\}}(m))].\\
~~~~-For ~all~ m \thicksim n,~ send
\{x^{k+1}_{n},y_{\{n,m\}}^{k+1}(n)\}~to~ Neighbor~
m.\\
\bullet ~~For ~any~ agent ~n\neq\mathcal{B}, ~
x^{k+1}_{n}=x^{k}_{n},
~and~~y_{\{n,m\}}^{k+1}(n)=y_{\{n,m\}}^{k}(n)\\
~~~~for~ all~ m \thicksim n.\\
 \bullet ~~Increment~ k.
\end{array}
$$

\end{algorithmic}
\end{algorithm}

\begin{ass}
The collections of sets $\{\mathcal{B}_{1},\mathcal{B}_{2},\ldots\}$
such that $\mathbb{P}[\zeta^{1} = \mathcal{B}_{i}]$ is positive
satisfies $ \bigcup\mathcal{B}_{i} =Q$.
\end{ass}

\begin{thm}
  Assume that the minimization Problem (5.1) is consistent,
$\liminf_{k\rightarrow\infty}\mu_{k}>0$, and
$\liminf_{k\rightarrow\infty}\tau_{k}>0$.  Let Assumption 5.1,
Assumption 6.1 and 6.2 hold true, and
$\frac{1}{\liminf_{k\rightarrow\infty}\tau_{k}}-\frac{1}{\liminf_{k\rightarrow\infty}\mu_{k}}>\frac{L}{2}$.
Let $(x^{k}_{n})_{n\in Q}$ be the sequence generated by DASPDSDS for
any initial point $( x^{0}, y^{0})$ . Then the sequence
$x^{k}_{1},\ldots,x^{k}_{|Q|}$ converges to a solution of Problem
(5.1).
\end{thm}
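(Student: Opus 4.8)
The plan is to mimic the derivations of Theorems~5.2 and~6.1: realize the DASPDSDS recursion as a randomized coordinate-descent version of the $\eta_k$-averaged operator that drives the synchronous Distributed ADMMDS$^{+}$, and then quote Theorem~4.1. First I would fix the product structure of the ambient space. Writing $\mathcal{Z}=\mathcal{X}^{|Q|}\times\mathcal{Y}$ with $\mathcal{Y}=\mathcal{X}^{2|Q|}$ as in Section~6.1, I assign to each agent $n\in Q$ the coordinate block made of its primal variable $x_n$ together with the half-edge dual variables $\{y_{\{n,m\}}(n)\}_{m\sim n}$; up to a reordering this gives $\mathcal{Z}=\mathcal{S}_1(\mathcal{Z})\times\cdots\times\mathcal{S}_{|Q|}(\mathcal{Z})$, so that $J=|Q|$ in the notation of Section~4.1, and we let $\zeta^{k}$ be the set-valued i.i.d.\ sequence whose values are the activated agent sets $\mathcal{B}$. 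By the proof of Theorem~6.1, Distributed ADMMDS$^{+}$ is exactly the ADMMDS$^{+}$ applied to Problem~(6.2); hence, by Lemma~3.2 and the averagedness computation in the proof of Theorem~3.2, its synchronous iterates satisfy $(y^{k+1},x^{k+1})=T^k(y^k,x^k)$ for an operator $T^k$ that is $\eta_k$-averaged on the Euclidean space $\mathcal{Z}_P$, and the standing condition $\tfrac{1}{\liminf_k\tau_k}-\tfrac{1}{\liminf_k\mu_k}>\tfrac L2$ forces $\eta_k<1$ with $\limsup_k\eta_k<1$. As in the proof of Theorem~3.2 there is also a limiting $\eta$-averaged operator $T$ with $T^k\to T$, obtained by passing to convergent subsequences of $(\tau_k)$ and $(\mu_k)$ and using the resolvent identity (Lemma~2.5).

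Next I would check that the DASPDSDS updates are precisely the randomized coordinate recursion~(4.3) applied to this $T^k$ with relaxation parameter $\beta_k\equiv1$. This amounts to re-running the computation that turned the ADMMDS$^{+}$ update equations~(a)--(d) into~(6.3a)--(6.3b) — equivalently (3.24a)--(3.24b) after the change of variables in Lemma~3.2 — but \emph{without} invoking the synchronous invariants $\bar z^{k+1}_{\{n,m\}}=\tfrac12(x^k_n+x^k_m)$ and $y^k_{\{n,m\}}(m)=-y^k_{\{n,m\}}(n)$, which fail when only a subset of agents is activated. Carried out this way, the $n$-th block of $T^k(y^k,x^k)$ is seen to consist of the symmetrized dual update $y^{k+1}_{\{n,m\}}(n)=\tfrac12\big(y^k_{\{n,m\}}(n)-y^k_{\{n,m\}}(m)\big)+\tfrac12(x^k_n-x^k_m)$ for $m\sim n$, together with the prox-expression displayed in DASPDSDS for $x^{k+1}_n$, which are exactly the operations Agent $n$ performs; and for $n\notin\mathcal{B}$ the components are carried over, which is precisely the action of $\hat T^{k,(\mathcal{B})}$ outside $\mathcal{B}$. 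That Agent $n$ only reads — never overwrites — the neighboring components $x^k_m$ and $y^k_{\{n,m\}}(m)$ is what makes this a legitimate block update.

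Finally I would verify the remaining hypotheses of Theorem~4.1 and conclude. Condition~(4.1) is exactly Assumption~6.2: since $\bigcup\mathcal{B}_i=Q$, every agent-coordinate belongs to some activatable set of positive probability. Consistency of Problem~(5.1) together with Lemma~6.1 gives consistency of~(6.2), hence $\bigcap_k\mathrm{Fix}(T^k)=\mathrm{Fix}(T)\neq\emptyset$; the choice $\beta_k\equiv1$ is admissible because $\limsup_k\eta_k<1$. Theorem~4.1 then yields that, almost surely, $(y^k,x^k)$ converges to some $(\tilde y,\tilde x)\in\mathrm{Fix}(T)$, i.e.\ to a primal-dual solution of~(6.2). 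By Lemma~6.1, $\tilde x=(x^{\ast},\dots,x^{\ast})$ with $x^{\ast}$ a minimizer of~(5.1); since the $n$-th component of $x^k$ converges to $\tilde x_n=x^{\ast}$, this proves the claim.

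The step I expect to be the main obstacle is the second one: making rigorous that the asynchronous, one-agent-at-a-time updates reproduce the coordinates of the synchronous operator $T^k$ once the auxiliary identities used in the derivation of~(6.3a)--(6.3b) are no longer available. This is the distributed analogue of the $\bar y^k\neq0$ complication handled in the proof of Theorem~5.2, and it requires some care about which of the two half-edge dual variables is read by a given agent; once it is settled, the convergence is an immediate application of Theorem~4.1 and Lemma~6.1.
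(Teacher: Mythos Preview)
Your proposal is correct and follows essentially the same route as the paper: identify the synchronous Distributed ADMMDS$^{+}$ with an $\eta_k$-averaged operator $T^k$, recognize the DASPDSDS updates as the randomized block version $\hat T^{k,(\zeta^{k+1})}$ with $\beta_k\equiv1$, invoke Theorem~4.1, and finish with Lemma~6.1; you also correctly flag the key subtlety, namely that the synchronous invariant $y^{k}_{\{n,m\}}(n)=-y^{k}_{\{n,m\}}(m)$ fails, which is exactly why the DASPDSDS $y$-update carries the symmetrization term.

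The only noteworthy difference is in how the ambient space is set up. The paper does not work directly on $\mathcal{X}^{|Q|}\times\mathcal{Y}$ as you do; instead it passes, via the change of variables $\lambda=Dx$ underlying Lemma~3.2 and Theorem~3.3, to $\mathcal{Z}=\mathcal{Y}\times\mathcal{R}$ with $\mathcal{R}=\mathrm{Im}\,D$, and defines the agent blocks by $\mathcal{S}_n(y,Dx)=\big((y_\epsilon(n))_{\epsilon\ni n},\,x_n\big)$. Since $D$ is injective this is isomorphic to your description, but the paper's choice makes the link to the $\eta_k$-averagedness (which in Lemma~3.2 is established for the iterates in the $\lambda$-variable) more transparent. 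The paper then carries out explicitly the computation you summarize as ``re-running (6.3a)--(6.3b) without the invariants'': it writes $h^\ast(\varphi)=\sum_{\epsilon}\iota_{\mathcal{C}_2^{\perp}}(\varphi_\epsilon)$, reads off $\varsigma^{k+1}_\epsilon=\mathrm{proj}_{\mathcal{C}_2^{\perp}}(y^k_\epsilon+\mu_k^{-1}\lambda^k_\epsilon)$, and recovers the DASPDSDS $y$- and $x$-updates from (3.24a)--(3.24b). So your outline and the paper's proof coincide in substance; the paper is just more explicit about the $\lambda=Dx$ reparametrization and the concrete form of the projections.
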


\begin{proof}
Let  $(\bar{f}, \bar{g}, h)=(f\circ D^{-1}, g\circ D^{-1}, h)$ where
$f, g, h$  and $D$  are the ones defined in the Problem 6.2. By
Equations (3.24a). We write these equations more compactly as
$(y^{k+1}, x^{k+1}) = T^{k}(y^{k}, x^{k})$ , the operator $T^{k}$
acts in the space $\mathcal{Z}=\mathcal {Y}\times\mathcal{R}$, and
$\mathcal{R}$ is the image of $\mathcal{X}^{|Q|}$ by $D$. then from
the proof of Theorem 3.2, we know $T^{k}$ is  $\eta_{k}$-averaged
operator. Defining the selection operator $\mathcal{S}_{n }$ on
$\mathcal{Z}$ as $\mathcal{S}_{n }(y, Dx) =
(y_{\epsilon}(n)_{\epsilon\in Q:n\in\epsilon}, x_{n})$. So, we
obtain that $\mathcal{Z} = \mathcal{S}_{1 }(\mathcal{Z})\times
\cdots\times \mathcal{S}_{|Q| }(\mathcal{Z})$ up to an element
reordering. Identifying the set $\mathcal{J}$ introduced in the
notations of Section 4.1 with $Q$, the operator $T^{(\zeta^{k})}$ is
defined as follows:
$$
\mathcal{S}_{n }(T^{k(\zeta^{k})}(y, Dx)) = \left\{
\begin{array}{l}
\mathcal{S}_{n }(T^{k}(y, Dx)),\,\,\, \,\,if \,n\,
\in\zeta^{k},\\
\mathcal{S}_{n }(y, Dx),\,\,\,\,\,\,\,\,\,\,\, \,\,if \,n\,
\neq\zeta^{k} .
\end{array}
\right.
$$
Then by Theorem 4.1, we know the sequence $(y^{k+1}, Dx^{k+1}) =
T^{k,(\zeta^{k+1})}(y^{k}, Dx^{k})$ converges almost surely to a
solution of Problem (3.25). Moreover, from Lemma 6.1, we have the
sequence $x^{k}$ converges almost surely to a solution of Problem
(5.1).\\
Therefore we need to show that the operator $T^{k,(\zeta^{k+1})}$ is
translated into the DASPDSDS algorithm. The definition (6.1) of  $h$
shows that
$$h^{\ast}(\varphi)=\Sigma_{\epsilon\in E}\iota_{\mathcal
{C}_{2}^{\perp}}(\varphi_{\epsilon}),$$ where
$\mathcal{C}_{2}^{\perp}= \{(x,-x) : x\in\mathcal {X}\}$. Therefore,
writing
$$(\varsigma^{k+1}, \upsilon^{k+1}=Dq^{k+1}) = T^{k}(y^{k},
\lambda^{k}=Dx^{k}),$$  then by Eq. (3.24a),
$$\varsigma_{\epsilon}^{k+1}=proj_{\mathcal{C}_{2}^{\perp}}(y^{k}_{\epsilon}+\mu_{k}^{-1}\lambda^{k}_{\epsilon}).$$
Observe that contrary to the case of the synchronous algorithm
(6.3), there is no reason here for which
$proj_{\mathcal{C}_{2}^{\perp}}(y^{k}_{\epsilon})= 0$.  Getting back
to $(y^{k+1},D x^{k+1}) = T^{k,(\zeta^{k+1})}(y^{k},
\lambda^{k}=Dx^{k})$, we have for all $n \in \zeta^{k+1}$ and all $m
\thicksim n$,
\begin{align*}
y^{k+1}_{\{n,m\}}(n) &=
\frac{y^{k+1}_{\{n,m\}}(n)-y^{k+1}_{\{n,m\}}(m)}{2}+\frac{\lambda^{k+1}_{\{n,m\}}(n)-\lambda^{k+1}_{\{n,m\}}(m)}{2}\\
&=\frac{y^{k+1}_{\{n,m\}}(n)-y^{k+1}_{\{n,m\}}(m)}{2}+\frac{x^{k}_{n}-x^{k}_{m}}{2}.
\end{align*}
By Equation (3.24b) we also get
$$\upsilon^{k+1}=\arg\min_{w\in\mathcal
{R}}[\bar{g}(w)+\langle\nabla
\bar{f}(y^{k}),w\rangle+\frac{\|w-\lambda^{k}+\tau_{k}(2y^{k+1}-y^{k})\|^{2}}{2\tau_{k}}].$$
Upon noting that $\bar{g}(Dx) = g(x)$ and $\langle\nabla
\bar{f}(\lambda^{k}),Dx\rangle= \langle(D^{-1})^{\ast}\nabla
f(D^{-1}Dx^{k}),Dx\rangle = \langle\nabla f(x^{k}), x\rangle$, the
above equation becomes
$$q^{k+1}_{n}=\arg\min_{w\in\mathcal
{X}}[g(w)+\langle\nabla
f(x^{k}),w\rangle+\frac{\|D(w-x^{k})+\tau_{k}(2y^{k+1}-y^{k})\|^{2}}{2\tau_{k}}].$$
Recall that $(D^{\ast}Dx)n = d_{n}x_{n}$. Hence, for all $n\in
\zeta^{k+1}$, we get after some computations
$$x^{k+1}_{n}=prox_{\tau_{k}g_{n}/d_{n}}[x^{k}_{n}-\frac{\tau_{k}}{d_{n}}\nabla
f_{n}(x^{k}_{n})+\frac{\tau_{k}}{d_{n}}(D^{\ast}(2y^{k+1}-y^{k}))_{n}].$$
 Using the identity $(D^{\ast}y)_{n}=\sum_{m:\{n,m\}\in
E}y_{\{n,m\}}(n)$ , it can easy check these equations coincides with
the $x$-update  in the DASPDSDS algorithm.

\end{proof}

\section{Numerical experiments}
In this section, we present some numerical experiments to verify the effective of our proposed
iterative algorithms. All experiments were performed in MATLAB (R2013a) on Lenovo laptop with Intel
(R) Core(TM) i7-4712MQ 2.3GHz and 4GB memory on the windows 7 professional operating system.

We consider the following $l_{1}$-regularization problem,
\begin{equation}\label{l1-regular}
\min_{x\in R^{n}}\ \frac{1}{2}\|Ax-b\|_{2}^{2} + \lambda \|x\|_{1},
\end{equation}
where $\lambda>0$ is the regularization parameter, the system matrix $A\in R^{m\times n}$, $b\in R^{m}$ and
$x\in R^{n}$. Let $\{W_{i}\}_{i=1}^{N}$ be a partition of $\{1,2, \cdots,m\}$, the optimization problem (\ref{l1-regular})
then writes,
\begin{equation}\label{l1-regu-path}
\min_{x\in R^{n}}\ \sum_{k=1}^{N}\sum_{i\in W_k} \frac{1}{2}\|A_i x - b_i\|_{2}^{2} + \lambda \|x\|_{1}.
\end{equation}
Further, splitting the problem (\ref{l1-regu-path}) between the batches, we have
\begin{equation}
\min_{x\in R^{Nn}}\ \sum_{k=1}^{N}\left (  \sum_{i\in W_k} \frac{1}{2}\|A_i x - b_i\|_{2}^{2} + \frac{\lambda}{N}\|x\|_1 \right)
+ \iota_{C}(x),
\end{equation}
where $x= (x_1, x_2, \cdots, x_N)$ is in $R^{Nn}$.

We first describe how  the system matrix $A$ and a $K$-sparse signals $x$ were generated.
Let the sample size $m=1/4 n$ and  $K= 1/64 n$. The system matrix $A$ is random generated from Gaussian
distribution with $0$ mean and $1$ variance. The $K$-sparse signal $x$ is generated by random perturbation with $K$
values nonzero which are obtained with uniform distribution in $[-2,2]$ and the rest are kept with zero. Consequently,
the observation vector $b = Ax + \delta$, where $\delta$ is added Gaussian noise with $0$ mean and $0.05$ standard variance.
Our goal is to recover the sparse signal $x$ from the observation vectors $b$.

To measure the performance of the proposed algorithms, we use $\ell_2$-norm error between the reconstructed
variable $x_{rec}$ and the true variable $x_{true}$, function values ($fval$) and iteration numbers ($k$). That is,
$$
Err = \|x_{rec}-x_{true}\|_2, \quad fval = \frac{1}{2}\|Ax_{rec}-b\|_{2}^{2}.
$$
We set the stopping criteria as
$$
\frac{\|x^{k+1}-x^{k}\|_{2}}{\|x^{k}\|_{2}} < \epsilon,
$$
where $\epsilon$ is a given small constant; Otherwise, the maximum iteration numbers $40000$ reached.

\begin{table}[htbp]
\footnotesize
\centering
\caption{Numerical results obtained by Algorithm \ref{algorithm4}}
\begin{tabular}{c|c|cccccccccccc}
\hline
Problem & Block & & \multicolumn{3}{c}{$\epsilon = 10^{-5}$}  & & \multicolumn{3}{c}{$\epsilon = 10^{-6}$} & &
\multicolumn{3}{c}{$\epsilon = 10^{-8}$} \\ \cline{4-6} \cline{8-10} \cline{12-14}
size & size & & $Err$ & $fval$ & $k$ & & $Err$ & $fval$ & $k$ & & $Err$ & $fval$ & $k$\\
\hline
\multirow{2}[1]{*}{$n=10240$} & $N=2$ & & $0.0616$ & $0.2970$ &
$20343$ & & $0.0488$ & $0.2901$ & $20839$ & & $0.0479$
& $0.2890$ & $22777$ \\
& $N=4$ & & $0.0949$ & $0.2869$ & $40285$ & & $0.0497$ & $0.2911$ & $41364$ & & $0.0480$
& $0.2890$ & $44771$ \\
\hline
\multirow{2}[1]{*}{$n=20480$} & $N=4$ & & $0.8746$ & $0.2710$ &
$68008$ & & $0.0511$ & $0.3063$ & $73661$ & & $0.0477$
& $0.3046$ & $78879$ \\
& $N=6$ & & $$ & $$ &
$$ & & $$ & $$ & $$ & & $$
& $$ & $$ \\

\hline

\end{tabular}
\end{table}

\begin{table}[htbp]
\footnotesize
\centering
\caption{Numerical results obtained by Algorithm \ref{algorithm5}}
\begin{tabular}{c|c|cccccccccccc}
\hline
Problem & Block & & \multicolumn{3}{c}{$\epsilon = 10^{-5}$}  & & \multicolumn{3}{c}{$\epsilon = 10^{-6}$} & &
\multicolumn{3}{c}{$\epsilon = 10^{-8}$} \\ \cline{4-6} \cline{8-10} \cline{12-14}
size & size & & $Err$ & $fval$ & $k$ & & $Err$ & $fval$ & $k$ & & $Err$ & $fval$ & $k$\\
\hline
\multirow{2}[1]{*}{$n=10240$} & $N=2$ & & $0.0469$ & $0.4599$ &
$-$ & & $0.0469$ & $0.3101$ & $-$ & & $0.0475$
& $0.2596$ & $-$ \\
& $N=4$ & & $0.0460$ & $0.5515$ & $-$ & & $0.0465$ & $0.3479$ & $-$ & & $0.0465$
& $0.4180$ & $-$ \\
\hline

\end{tabular}
\end{table}

\noindent \textbf{Acknowledgements}

This work was supported by the National Natural Science Foundation
of China (11131006, 41390450, 91330204, 11401293), the National
Basic Research Program of China (2013CB 329404), the Natural Science
Foundations of Jiangxi Province (CA20110\\
7114, 20114BAB 201004).


\begin{thebibliography}{00}

\bibitem{[1]}L. Condat, ¡°A primal-dual splitting method for convex optimization
involving Lipschitzian, proximable and linear composite terms,¡±
Journal of Optimization Theory and Applications, vol. 158, no. 2,
pp. 460¨C479, 2013.
\bibitem{[2]} Bianchi P, Hachem W  and Iutzeler F 2014 A Stochastic coordinate descent
primal-dual algorithm and applications to large-scale composite
(arXiv:1407.0898v1 [math.OC] 3 Jul 2014) Optimization
\bibitem{[3]}Bauschke, H.H., Combettes, P.L.: Convex Analysis and Monotone Operator Theory in Hilbert Spaces.
Springer, New York (2011)
\bibitem{[4]}Ogura, N., Yamada, I.: Non-strictly convex minimization over the fixed point set of an asymptotically
shrinking nonexpansive mapping. Numer. Funct. Anal. Optim. 23(1¨C2), 113-137 (2002)
\bibitem{[5]} Byrne, C.: A unified treatment of some iterative algorithms in signal processing and image reconstruction.
Inverse Probl. 20, 103-120 (2004)
\bibitem{[6]} Combettes, P.L.: Solving monotone inclusions via compositions of nonexpansive averaged operators.
Optimization 53, 475-504 (2004)
\bibitem{[7]} Geobel, K., Kirk, W.A.: Topics in Metric Fixed Point Theory. Cambridge Studies in Advanced Mathematics,
vol. 28. Cambridge University Press, Cambridge (1990)
\bibitem{[8]}Bruck R E and Passty G B 1979 Almost convergence of the infinite product of resolvents in Banach spaces Nonlinear Anal. 3 279-282.
\bibitem{[9]}Bruck R E and Reich S 1977 Nonexpansive projections and resolvents in Banach spaces Houston J. Math. 3  459-470.
\bibitem{[10]}R. T. Rockafellar, Convex analysis, Princeton Mathematical Series, No.
28. Princeton University Press, Princeton, N.J., 1970.
\bibitem{[11]} I. Daubechies, M. Defrise, and C. De Mol, ¡°An iterative
thresholding algorithm for linear inverse problems with a sparsity
constraint,¡± Communications on pure and applied mathematics, vol.
57, no. 11, pp. 1413-1457, 2004.
\bibitem{[12]} R. Tibshirani, ¡°Regression shrinkage and selection via the lasso,¡±
Journal of the Royal Statistical Society. Series B (Methodological),
pp. 267-288, 1996.
\bibitem{[13]} Forero, P A, Cano A and Giannakis G B 2010 Consensus-based
distributed support vector machines  The Journal of Machine Learning
Research  99 1663-1707.
\bibitem{[14]} Agarwal A, Chapelle O, Dud\'{\i}k M, and  Langford J 2011 A reliable effective
terascale linear learning system arXiv preprint arXiv:1110.4198.
\bibitem{[15]}P. Bianchi and J. Jakubowicz, ¡°Convergence of a multi-agent projected
stochastic gradient algorithm for non-convex optimization,¡± IEEE
Transactions on Automatic Control, vol. 58, no. 2, pp. 391- 405,
February 2013.
\bibitem{[16]} S.S. Ram, V.V. Veeravalli, and A. Nedic, ¡°Distributed and recursive
parameter estimation in parametrized linear state-space models,¡±
IEEE Trans. on Automatic Control, vol. 55, no. 2, pp. 488-492, 2010.
\bibitem{[17]} P. Bianchi, G. Fort, and W. Hachem, ¡°Performance of a distributed
\bibitem{[18]} Yu. Nesterov, ¡°Efficiency of coordinate descent methods on huge-scale
optimization problems,¡± SIAM Journal on Optimization, vol. 22, no.
2, pp. 341-362, 2012.
\bibitem{[19]} O. Fercoq and P. Richt¡äarik, ¡°Accelerated, parallel and proximal coordinate
descent,¡± arXiv preprint arXiv:1312.5799, 2013.
\bibitem{[20]} M. Ba¡¦c¡äak, ¡°The proximal point algorithm in metric spaces,¡± Israel
Journal of Mathematics, vol. 194, no. 2, pp. 689-701, 2013.



\end{thebibliography}
\end{document}